\setlist{noitemsep}
\DeclareSymbolFontAlphabet{\mathbb}{AMSb} 
\DeclareSymbolFontAlphabet{\mathbbl}{bbold}
\newcommand{\smallbullet}{} 
\DeclareRobustCommand\smallbullet{%
  \mathord{\mathpalette\smallbullet@{0.7}}%
}
\newcommand{\smallbullet@}[2]{%
  \vcenter{\hbox{\scalebox{#2}{$\m@th#1\bullet$}}}%
}
\numberwithin{figure}{section}
\numberwithin{equation}{section}
\newtheorem{theorem}[figure]{Theorem}
\newtheorem{lemma}[figure]{Lemma}
\newtheorem{corollary}[figure]{Corollary}
\newtheorem{proposition}[figure]{Proposition}
\theoremstyle{definition}
\newtheorem{definition}[figure]{Definition}
\newtheorem{notation}[figure]{Notation}
\theoremstyle{definition}
\newtheorem{remark}[figure]{Remark}
\theoremstyle{definition}
\newtheorem{example}[figure]{Example}
\theoremstyle{definition}
\newtheorem{construction}[figure]{Construction}
\DeclareMathOperator*{\colim}{colim}
\DeclareMathOperator{\crys}{crys}
\DeclareMathOperator{\gr}{gr}
\DeclareSymbolFontAlphabet{\mathbb}{AMSb} 
\DeclareSymbolFontAlphabet{\mathbbl}{bbold}
\DeclareMathOperator{\Spec}{Spec}
\DeclareMathOperator{\conj}{conj}
\DeclareMathOperator{\Hodge}{Hodge}
\DeclareMathOperator{\Fil}{Fil}
\DeclareMathOperator{\dR}{dR}
\newcommand{\LL}{\L}
\newcommand{\w}{\text}
\newcommand{\TR}{\mathrm{TR}}
\newcommand{\TC}{\mathrm{TC}}
\renewcommand{\L}{\mathbb{L}}
\newcommand{\Z}{\mathbf{Z}}
\newcommand{\F}{\mathbf{F}}
\newcommand{\Q}{\mathbf{Q}}
\newcolumntype{C}[1]{>{\centering\arraybackslash}p{#1}}
\mathchardef\mhyphen="2D
\newcommand\restr[2]{{\left.\kern-\nulldelimiterspace#1\vphantom{\big|}\right|_{#2}}}
\newcommand{\suchthat}{\;\ifnum\currentgrouptype=16 \middle\fi\vert\;}
\newcommand{\cosimp}[3]{\xymatrix@1{#1 \ar@<.4ex>[r] \ar@<-.4ex>[r] & {\ }#2 \ar@<0.8ex>[r] \ar[r] \ar@<-.8ex>[r] & {\ } #3  \cdots }} 
   \def\MR#1{}
\title{ $p$-typical curves on $p$-adic Tate twists and de Rham--Witt forms}
\author{Sanath K. Devalapurkar}
\author{Shubhodip Mondal}
\address[Sanath K. Devalapurkar]{}
\email{sdevalapurkar@math.harvard.edu}
\address[Shubhodip Mondal]{}
\email{smondal@math.ubc.ca}
\begin{document}

\begin{abstract}
We show that de Rham--Witt forms are naturally isomorphic to $p$-typical curves on $p$-adic Tate twists, which answers a question of Artin--Mazur from 1977 pursued in the earlier work of Bloch and Kato. We show this by more generally equipping a related result of Hesselholt on topological cyclic homology with the motivic filtrations introduced by Bhatt--Morrow--Scholze.
\end{abstract}
\maketitle
\vspace{-8.5mm}
\tableofcontents

\section{Introduction}
Let $k$ be a perfect field of characteristic $p>0.$ For a smooth proper scheme $X$ over $k,$ Artin and Mazur \cite{AM} constructed certain formal groups $\Phi^r(X)$ associated to $X$, with the property that $p$-typical curves on $\Phi^r(X)$ recover the slope $[0,1)$ part of crystalline cohomology $H^r_{\crys} (X/W(k))_{\Q}$ (see \cite[Cor.~3.3]{AM}). Roughly speaking, this relies on the fact that the Dieudonn\'e module of $p$-typical curves on the formal group $\widehat{\mathbb G}_m$ is $W(k)$ with the usual $F$ and $V$-operators. In \cite[Qn.~(b)]{AM}, the authors raised the question of whether there is a way to recover the slope $[i,i+1)$ part of $H^r_{\crys} (X/W(k))_{\Q}$ via the formalism of $p$-typical curves on certain group valued functors.

This question was answered by Bloch \cite{bloch} under the hypothesis that $p > \dim X$ by studying $p$-typical curves on symbolic part of the higher algebraic $K$-groups, generalizing the role played by ${{\mathbb G}}_m$ when $i=0.$ The possibility of removing some of these assumptions was expressed in  \cite[p.~635, Rmk.~2]{Kato}.

The goal of our paper is to fully settle the above question of Artin and Mazur. Furthermore, we show that instead of using the algebraic $K$-groups (as done by Bloch \cite{bloch}), one may simply use the $p$-adic Tate twists $\Z_p (n)[n]$ from \cite{BMS2}, which, in some sense, generalizes the role played by the $p$-adic completion of $\mathbb G_m$ when $n=1.$ More precisely, we prove the following, which affirmatively answers \cite[Qn.~(b)]{AM} without any restriction on the dimension (see \cref{artinmazurquestionrem}).

\begin{theorem}\label{mainthmintro}
    Let $S$ be a quasisyntomic $\F_p$-algebra. Then for every $n \ge 0,$ we have a natural isomorphism
$$\prod_{I_p}\L W\Omega^{n-1}_S \simeq \mathrm{fib} \left(\varprojlim_k \Z_p(n)(S[t]/t^k)[n] \to \Z_p(n)(S)[n]\right),$$ where $I_p$ denotes the set of positive integers coprime to $p.$
\end{theorem}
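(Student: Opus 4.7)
The plan is to equip Hesselholt's theorem identifying curves on $\TC$ with de Rham--Witt forms with the Bhatt--Morrow--Scholze motivic filtration on $\TC$, and to read off the claim from graded pieces. Recall from BMS2 that for a quasisyntomic ring $R$ there is a complete descending motivic filtration $\Fil^\bullet_{\mathrm{mot}} \TC(R)$ with $\gr^n_{\mathrm{mot}} \TC(R) \simeq \Z_p(n)(R)[2n]$, which defines the Tate twists $\Z_p(n)$. Let
\[
C(F)(S) := \mathrm{fib}\!\left(\varprojlim_k F(S[t]/t^k) \to F(S)\right)
\]
denote the augmentation-reduced big curves functor; it is exact in $F$. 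Applying $C$ to $\Fil^\bullet_{\mathrm{mot}} \TC$ yields a complete filtration on $C(\TC)(S)$ with
\[
\gr^n_{\mathrm{mot}} C(\TC)(S) \simeq \mathrm{fib}\!\left(\varprojlim_k \Z_p(n)(S[t]/t^k)[2n] \to \Z_p(n)(S)[2n]\right),
\]
so the right-hand side of the theorem is precisely $\gr^n_{\mathrm{mot}} C(\TC)(S)[-n]$.

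The next step is to identify $C(\TC)(S)$ with a big derived de Rham--Witt object and to lift this identification through the motivic filtration. Hesselholt's theorem identifies $p$-typical curves on $\TC$ with $\TR$, whose homotopy groups recover the de Rham--Witt complex on smooth inputs; big curves then decompose via the classical Cartier decomposition as $\prod_{I_p}$ of their $p$-typical parts. The required filtered refinement states that, under this equivalence, the motivic filtration on $C(\TC)(S)$ corresponds to a form-degree filtration on the de Rham--Witt side whose $n$-th graded piece is $\prod_{I_p} \L W\Omega^{n-1}_S[n]$. The shift $n \mapsto n-1$ is the familiar one: curves on the $n=1$ layer (a version of $\widehat{\GG}_m$) produce the Witt vectors $W(S) = W\Omega^0_S$. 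Comparing graded pieces with the first paragraph then yields the claimed isomorphism.

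The main obstacle is establishing this filtered refinement of Hesselholt's theorem, which constitutes the central technical contribution. My approach would be quasisyntomic descent: reduce to the universal case where $S$ is quasiregular semiperfect, in which case the motivic filtration on $\TC(S)$ coincides with the double-speed Postnikov filtration and derived de Rham--Witt admits a concrete description; verify the filtered equivalence there by a direct computation, and then propagate to general quasisyntomic $S$ by descent along the cover by quasiregular semiperfect algebras. Along the way one must check that $C$ is compatible with the limit implementing completeness of the motivic filtration and that the quasisyntomic descent data for $\TC$ are transported correctly to the de Rham--Witt side.
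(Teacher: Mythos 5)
Your plan follows the paper's strategy at the top level: filter Hesselholt's fiber sequence by the Bhatt--Morrow--Scholze motivic filtration, reduce to quasiregular semiperfect rings via quasisyntomic descent, and read off the claim from graded pieces. But the proposal leaves unaddressed the two technical points that actually carry the argument, and labels them a ``direct computation.''

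First, you treat the ``form-degree filtration on the de Rham--Witt side whose $n$-th graded piece is $\prod_{I_p}\L W\Omega^{n-1}_S[n]$'' as given and needing only to be matched up. Constructing this filtration on $\TR$ is itself a result: for non-smooth $S$ the classical $W\Omega^n_S$ are the wrong objects, so one must first develop an animated de Rham--Witt theory $\L W\Omega^n_S$, prove fpqc descent for it, and left Kan extend the Postnikov filtration on $\TR^r(\cdot,p)$ from polynomial algebras, then pass to the limit over $R$-maps to see the graded pieces collapse to $\L W\Omega^n_S[n]$. Second, and more seriously, even after reducing to quasiregular semiperfect $R$ you cannot splice the three Postnikov filtrations into Hesselholt's sequence without the estimate $\varprojlim_k \Z_p(n)(R[t]/t^k)\in D_{[-1,0]}(\Z_p)$, which is what identifies the motivic filtration on $\varprojlim_k\TC(R[t]/t^k)$ with $\tau_{\ge 2n-1}$. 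This is the real content of the paper's Section 3 and requires a careful analysis of the conjugate, Hodge, and Nygaard filtrations on the pro-system $R[t]/t^k$, culminating in the identity $\Fil^{n-2}_{\conj}\dR(R)+\Fil^n_{\Hodge}\widehat{\dR}(R)=\widehat{\dR}(R)$. A minor but telling slip: the motivic filtration on $\TC(R)$ for quasiregular semiperfect $R$ is the \emph{odd} Postnikov filtration $\tau_{\ge 2n-1}\TC(R)$, not the double-speed one, precisely because $\Z_p(0)(R)$ sits in degrees $[-1,0]$; this off-by-one is what produces the $n\mapsto n-1$ shift you correctly anticipate, and glossing over it suggests the concrete quasiregular semiperfect computation has not been thought through.
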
{}

In the above, $\LL W\Omega^{n-1}_S$ denotes the animated de Rham--Witt forms as in \cref{derhamw}. By analogy with the classical work of Cartier \cite{Cart}, the right hand side may be regarded as curves on the functor $\Z_p (n)[n]$; which is further equipped with Frobenius and Verschiebung operators $F_m, V_m$ for all $m \ge 0$ (see \cref{constrans}). Let $\mathbb{D}(\Z_p(n)[n]_S)\coloneqq \bigcap_{(m,p)=1, m>1} \mathrm{fib}(F_m)$ (see \cref{ptypica}), which may be regarded as $p$-typical curves on the functor $\Z_p(n)[n].$ Note that $\mathbb{D}(\Z_p(n)[n]_S)$ is naturally equipped with the operators $F\coloneqq F_p$ and $V\coloneqq V_p$. As a corollary of \cref{mainthmintro}, one obtains

\begin{corollary}\label{coro}
    Let $S$ be a quasisyntomic $\F_p$-algebra. Then for each $n \ge 0,$ we have a natural isomorphism 
$$ \LL W \Omega_S^{n-1} \simeq \mathbb{D}(\Z_p(n)[n]_S),$$ which is compatible with the $F$ and $V$ defined on both sides.
\end{corollary}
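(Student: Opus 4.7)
The plan is to deduce the corollary from \cref{mainthmintro} by analyzing how the Frobenius operators $F_m$, for $m>1$ coprime to $p$, act on the product $\prod_{I_p}\LL W\Omega^{n-1}_S$ under the isomorphism of the main theorem, and identifying their common fiber.

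The structure mirrors that of classical Cartier theory, where the curves $C(\widehat{\mathbb G}_m) \simeq W(R)$ on the multiplicative formal group decompose as $\prod_{I_p} W_p(R)$ via the Verschiebung operators $V_m$ indexed by $m \in I_p$, and the $p$-typical Witt vectors $W_p(R)$ are characterized intrinsically as the intersection $\bigcap_{m>1,(m,p)=1} \ker F_m$, which is precisely the factor indexed by $1 \in I_p$. In our setting, \cref{mainthmintro} supplies the analogous product decomposition of curves on $\Z_p(n)[n]$. First I would verify that the operators $F_m$ and $V_i$ on the curves side correspond, under the isomorphism of \cref{mainthmintro}, to operators on $\prod_{I_p}\LL W\Omega^{n-1}_S$ obeying the Cartier-style commutation $F_m V_i = d \cdot V_{i/d} F_{m/d}$ with $d = \gcd(m,i)$; in particular $V_i$ should identify the factor indexed by $1$ with the factor indexed by $i$, while $F_m$ for $m>1$ coprime to $p$ sends the factor indexed by $1$ to the zero object and acts between the other factors via projection. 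Granting this Cartier-style action, a direct diagram chase on the product identifies $\mathbb{D}(\Z_p(n)[n]_S) = \bigcap_{m > 1, (m,p)=1} \mathrm{fib}(F_m)$ with the single factor indexed by $1 \in I_p$, namely $\LL W\Omega^{n-1}_S$.

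For the compatibility of $F = F_p$ and $V = V_p$ with the classical Frobenius and Verschiebung on $\LL W\Omega^{n-1}_S$, I would appeal to the naturality of the construction underlying \cref{mainthmintro}. Since that isomorphism is obtained by equipping Hesselholt's computation of $p$-typical curves on topological cyclic homology with the motivic filtration of Bhatt--Morrow--Scholze, the operator $F_p$ on curves is induced by the cyclotomic Frobenius on $\TC$, whose restriction to de Rham--Witt forms is known to recover the classical $F$; similarly, $V_p$ is induced by the Verschiebung structure on $\TC$ and recovers the de Rham--Witt Verschiebung $V$. Tracing these identifications through the filtration gives the compatibility statement.

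The main obstacle is the first step: making the Cartier-style commutation relations on the product $\prod_{I_p}\LL W\Omega^{n-1}_S$ entirely precise at the level of animated objects, so that the intersection of $\mathrm{fib}(F_m)$ for $m > 1$ coprime to $p$ genuinely collapses the product onto the factor indexed by $1$ and not merely onto something weakly equivalent. This reduces to careful bookkeeping of the $F_m$ and $V_m$ within the cyclotomic and motivic-filtered formalism used to establish \cref{mainthmintro}, and is where the bulk of the technical work will lie.
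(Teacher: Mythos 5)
Your proposal is correct and takes essentially the same approach as the paper. You deduce the corollary from the product decomposition in \cref{mainthmintro} by analyzing the action of the Frobenius operators $F_m$ (for $m>1$ coprime to $p$) on $\prod_{I_p}\LL W\Omega^{n-1}_S$ and identifying the $p$-typical piece $\mathbb{D}(\Z_p(n)[n]_S)$ with the factor indexed by $1$; the paper does exactly this, citing Hesselholt's description of the Frobenius on $\TR(S)$ (which encodes the Cartier-style relations you invoke) and an external reference for the compatibility of $F_m$, $V_m$ across the isomorphism of \cref{mainthmintro}. The one place where your write-up is somewhat vaguer than the paper is the compatibility of $F_p$ and $V_p$ with the classical de Rham--Witt operators: the paper pins this down by citing the explicit description in \cite[Prop.~3.3.1]{Hess}, whereas you appeal to naturality of the construction, which is in spirit the same but would need the same reference to close out fully.
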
{}

\begin{remark}
Let us explain \cref{mainthmintro} and \cref{coro} in the case $n=1.$ For a quasisyntomic $\F_p$-algebra $S,$ one has $\Z_p(1) (S)[1]\simeq R\Gamma_{\mathrm{\acute{e}t}} (S, \mathbb G_m)^{\wedge_p}.$ More or less by definition, it then follows that the right hand side of \cref{mainthmintro} is isomorphic to the ring of big Witt vectors of $S;$ which is isomorphic to $\prod_{I_p}W(S),$ where $W(S)$ denotes the ring of $p$-typical Witt vectors of $S$ (see, e.g., \cite[Prop.~3.6]{bloch}). Thus \cref{coro} in this case says that $W(S) \simeq \mathbb{D}(\mathbf{Z}_p(1)[1]_S).$
\end{remark}{}

\begin{remark}\label{artinmazurquestionrem}
    Note that by the degeneration of the slope spectral sequence as proved in \cite{Luc1}, the slope $[i, i+1)$ part of $H^r_{\crys}(X/W(k))_{\Q}$ identifies with $H^{r-i}(X, W\Omega^i_X )_\mathbb{Q}$. Thus, our formula recovers the slopes desired in \cite[Qn.~(b)]{AM}. Also, \cref{coro} gives a way to reconstruct the de Rham--Witt forms purely from the $p$-adic Tate twists $\Z_p(n).$
\end{remark}{}

Let us now mention the main ideas that go into the proof of \cref{mainthmintro}, which, in particular, uses some of the recent techniques introduced in $p$-adic geometry by Bhatt--Morrow--Scholze. In \cite{Hess}, Hesselholt proved that there is a natural isomorphism 
\begin{equation}\label{hesselholt}
    \TR(S)[1]\simeq  \mathrm{fib} (\varprojlim_k \TC (S[t]/t^k) \to \TC(S)),
\end{equation}where $\TR$ denotes topological restriction homology and $\TC$ denotes topological cyclic homology\footnote{Combine \cite[Thm.~3.1.9]{Hess} and \cite[Thm.~3.1.10]{Hess} to obtain this statement.}. Further, Hesselholt \cite[Thm.~C]{Hess} showed that if $S$ is smooth, then $\pi_* \TR(S,p) \simeq W\Omega^*_S,$ where $\TR(S,p)$ denotes $p$-typical part of $\TR(S).$ In \cite{BMS2}, Bhatt--Morrow--Scholze constructed a ``motivic" filtration $\Fil^* \TC (S)$ on $\TC(S)$ where the graded pieces $\mathrm{gr}^n \TC(S)$ are given by $\Z_p (n)(S)[2n]$ (\textit{cf}.~\cite{Ben, arpon}). Using techniques similar to \cite{BMS2} and animating the theory of de Rham Witt forms (\cref{derhamw}), we obtain the following:
\begin{proposition}
      Let $A$ be an $\F_p$-algebra. There is a descending exhaustive complete $\Z$-indexed filtration $\mathrm{Fil}^*_\mathrm{} \TR(A,p)$ on $\TR(A,p)$ such that $\mathrm{gr}^n_\mathrm{} \TR(A,p) \simeq \L W\Omega^n_A [n].$ 
\end{proposition}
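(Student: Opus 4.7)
The plan is to mirror the Bhatt--Morrow--Scholze construction of the motivic filtration on $\TC$ from \cite{BMS2}, now transported to the setting of $\TR(-, p)$ via quasisyntomic descent. First I would verify that the functor $A \mapsto \TR(A, p)$ satisfies quasisyntomic descent on $\F_p$-algebras. This follows from the $p$-completed quasisyntomic descent for $\THH$ established in \cite{BMS2}, combined with the fact that $\TR(-, p)$ is built as a limit of genuine $C_{p^n}$-fixed points of $\THH$ along the restriction maps, a construction that preserves the sheaf condition.

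The key computation is the following: for a quasiregular semiperfect (qrsp) $\F_p$-algebra $R$, I would show that $\TR(R, p)$ is connective with $\pi_n \TR(R, p) \cong W\Omega^n_R$, the classical de Rham--Witt forms. In the smooth case this is Hesselholt's theorem \cite[Thm.~C]{Hess}; in the qrsp case one can either proceed via direct analysis of the restriction, Frobenius, and Verschiebung operators on the genuine $C_{p^n}$-fixed points of $\THH(R)$, or extract it from the Hesselholt--Madsen fiber sequence $\TC(R, p) \to \TR(R, p) \xrightarrow{F-1} \TR(R, p)$ using the \cite{BMS2} description of $\TC(R, p)$ via syntomic cohomology and the known structure of $\Z_p(n)(R)$ on quasiregular semiperfect inputs. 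On the site of qrsp algebras one then defines $\mathrm{Fil}^n \TR(R, p) := \tau_{\geq n} \TR(R, p)$, the Postnikov filtration, so that $\mathrm{gr}^n \TR(R, p) \simeq W\Omega^n_R[n]$; this is manifestly a descending, exhaustive, and complete $\Z$-indexed filtration (being zero in negative degrees by connectivity).

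Since $A \mapsto \TR(A, p)$ is a quasisyntomic sheaf by the first step and $R \mapsto \L W\Omega^n_R$ agrees with the classical $W\Omega^n_R$ on qrsp algebras (by the definition of animated de Rham--Witt forms via quasisyntomic sheafification in \cref{derhamw}), the filtration extends uniquely by sheafification to all quasisyntomic $\F_p$-algebras, and then by left Kan extension to arbitrary $\F_p$-algebras. The graded pieces transform into $\mathrm{gr}^n \TR(A, p) \simeq \L W\Omega^n_A[n]$, exactly as desired. The main obstacle is the qrsp computation $\pi_n \TR(R, p) \cong W\Omega^n_R$: this is where the heavy lifting occurs, and it requires carefully handling the interplay between the equivariant structure on $\THH(R)$ and the classical algebra of de Rham--Witt forms on non-smooth qrsp inputs, together with a vanishing statement ensuring that no higher derived terms of $W\Omega^n$ contribute for qrsp $R$.
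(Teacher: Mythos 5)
Your approach is genuinely different from the paper's. The paper animates the Postnikov filtration on $\TR^r(-,p)$ from polynomial algebras (where $\pi_n\TR^r(A,p) \simeq \bigoplus_{0\le i\le n,\ n-i \text{ even}} W_r\Omega^i_A$ by Hesselholt), and then passes to the limit over $r$, observing from the behavior of the restriction map on graded pieces that only the $i=n$ summand survives; the quasisyntomic-sheaf property is recorded afterwards as \cref{descent}. You instead propose to build the filtration on the quasisyntomic site and pull back, which is a legitimate alternative and matches the BMS philosophy. But as written, your argument has a degree error that produces the wrong graded pieces.

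The problem is in your quasiregular semiperfect computation and the resulting choice of truncation. For qrsp $R$, the paper shows (\cref{fixthis}) that $\L W\Omega^n_R[-n]$ is discrete, i.e.\ $\L W\Omega^n_R$ sits in homological degree $n$, not degree $0$; it does \emph{not} agree with the classical $W\Omega^n_R$ viewed as a discrete object when $n>0$. Consequently $\gr^n\TR(R,p)\simeq\L W\Omega^n_R[n]$ lives in degree $2n$, $\pi_*\TR(R,p)$ is concentrated in \emph{even} degrees with $\pi_{2n}\TR(R,p)\cong \pi_n\L W\Omega^n_R$, and the correct filtration is the double-speed Postnikov filtration $\Fil^n = \tau_{\ge 2n}\TR(R,p)$ (\cref{doublespeed}), not $\tau_{\ge n}$. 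Your extrapolation of the smooth-case formula $\pi_n\TR(A,p)\simeq W\Omega^n_A$ to qrsp $R$ is false (for smooth $A$ the Postnikov and motivic filtrations happen to coincide, but for qrsp $R$ they differ by the factor of $2$), and your proposed $\tau_{\ge n}$-filtration would give $\gr^n$ concentrated in degree $n$, which cannot match $\L W\Omega^n_A[n]$.

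A second, structural point: even with the degree fixed, your approach turns the evenness of $\pi_*\TR(R,p)$ for qrsp $R$ --- the Darrell--Riggenbach theorem --- into a \emph{prerequisite}, whereas the paper deliberately arranges things so that this evenness (\cref{evenn}) comes out as a \emph{corollary} of the filtration's construction via animation and the discreteness of $\L W\Omega^n_R[-n]$. If you go the descent route you either have to import Darrell--Riggenbach as a black box or reprove it independently; the paper's construction is precisely designed to sidestep this.
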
{}

 Therefore, we pursue the more general question of obtaining a filtered version of \cref{hesselholt}. More precisely, we prove the following:

\begin{theorem}\label{mainthm2intro}
  Let $S$ be a quasisyntomic $\F_p$-algebra. Then  we have a natural isomorphism $$ (\Fil^{*-1} \TR(S))[1] \simeq \varprojlim_k \mathrm{fib} \left(\Fil^* \TC(S[t]/t^k) \to \Fil^* \TC(S)\right).$$  
\end{theorem}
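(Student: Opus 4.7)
The plan is to lift Hesselholt's equivalence \cref{hesselholt} to the filtered setting via quasisyntomic descent. Both the motivic filtration $\Fil^\star \TC$ of \cite{BMS2} and the filtration $\Fil^\star \TR(-,p)$ produced in the proposition above are defined by right Kan extension from quasiregular semiperfect (qrsp) $\F_p$-algebras. Since the assignment $S \mapsto S[t]/t^k$ preserves quasisyntomicness for every $k$, both sides of the claimed equivalence define quasisyntomic sheaves of filtered spectra; it therefore suffices to prove the statement for qrsp $R$, where these filtrations admit concrete descriptions in terms of the Nygaard-filtered prismatic cohomology of $R$ and the derived $p$-typical Witt complex.

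The key construction is a natural filtered map
\[
\varphi\colon (\Fil^{\star-1}\TR(S))[1] \longrightarrow \varprojlim_k \mathrm{fib}\!\bigl(\Fil^\star \TC(S[t]/t^k) \to \Fil^\star \TC(S)\bigr),
\]
which on underlying spectra recovers \cref{hesselholt}. I would construct $\varphi$ by upgrading Hesselholt's argument to the level of motivically filtered cyclotomic spectra, using the $t$-adic filtration on $\THH(S[t])$ (whose cyclotomic fixed points encode the appearance of $\TR$) as an auxiliary filtration compatible with the motivic one. Both filtrations are complete by qrsp descent, so once $\varphi$ is in place the filtered equivalence reduces to checking that $\varphi$ induces equivalences on all associated graded pieces.

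In degree $n$, the graded-piece comparison amounts, using the decomposition $\TR \simeq \prod_{I_p}\TR(-,p)$ and the proposition, to identifying $\prod_{I_p}\L W\Omega^{n-1}_R[n]$ with $\varprojlim_k \mathrm{fib}(\Z_p(n)(R[t]/t^k) \to \Z_p(n)(R))[2n]$ for qrsp $R$. Both sides admit explicit descriptions via the Nygaard-filtered prismatic cohomology of $R$ and of $R[t]/t^k$, and the desired isomorphism can be verified directly in this setting. The main obstacle is the construction of $\varphi$: carrying out Hesselholt's argument consistently in the filtered cyclotomic world, rather than only recovering an equivalence of underlying spectra, is the technical heart of the proof. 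Once $\varphi$ is produced, the reduction to qrsp rings and the explicit graded-piece identification are then relatively direct.
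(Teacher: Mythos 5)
Your outline correctly identifies quasisyntomic descent as the organizing principle and correctly reduces the graded-piece comparison to qrsp rings and Nygaard-filtered crystalline cohomology. However, there is a genuine gap at the step you yourself flag as ``the technical heart'': you propose to first construct a global filtered map $\varphi$ by upgrading Hesselholt's cyclotomic argument to the setting of motivically filtered cyclotomic spectra, and leave that construction entirely unaddressed. Carrying Hesselholt's argument through a compatible auxiliary $t$-adic filtration while keeping track of the motivic filtration is a nontrivial and, as far as this paper is concerned, unnecessary piece of work.

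The paper dissolves this obstacle rather than solving it. The order of operations is reversed: one first reduces to a qrsp ring $R$ by descent (both sides are quasisyntomic sheaves of complete filtered spectra), and only then produces the filtered fiber sequence. The point is that for qrsp $R$ every filtration in sight is a Postnikov-type truncation: $\Fil^n\TR(R)\simeq\tau_{\ge 2n}\TR(R)$ because $\pi_*\TR(R)$ is even (\cref{cor3}), $\Fil^n\TC(R)\simeq\tau_{\ge 2n-1}\TC(R)$ because $\Z_p(n)(R)\in D_{[-1,0]}$ (\cref{BMSsimple}), and $\Fil^n\varprojlim_k\TC(R[t]/t^k)\simeq\tau_{\ge 2n-1}\varprojlim_k\TC(R[t]/t^k)$ by the $D_{[-1,0]}$-estimate of \cref{LY} combined with \cref{filtrationlemma}. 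Applying these truncations to Hesselholt's underlying fiber sequence $\varprojlim_k\TC(R[t]/t^k)\to\TC(R)\to\TR(R)[2]$ automatically yields a fiber sequence of filtered objects, precisely because $\pi_{2n-1}(\TR(R)[2])=\pi_{2n-3}\TR(R)=0$ kills the relevant boundary map. The global filtered equivalence is then obtained by descent, with no need to construct $\varphi$ on the nose beforehand. So the estimates of \cref{sec3} and \cref{cor3} are not auxiliary sanity checks on the graded pieces, as your plan treats them; they are exactly what replaces the filtered-cyclotomic construction you propose and do not supply. A smaller inaccuracy: $\Fil^*\TR(-,p)$ in the paper is defined by left Kan extension from polynomial rings (\cref{cons1}) and only afterwards shown to satisfy quasisyntomic descent (\cref{descent}); it is not defined by right Kan extension from qrsp rings.
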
{}
Note that passing to graded pieces yields \cref{mainthmintro}. In order to prove \cref{mainthm2intro}, we crucially use the technique of quasisyntomic descent introduced in \cite{BMS2}. This allows one to reduce to the case when $S$ is a quasiregular semiperfect algebra. In this case, the filtration $\Fil^* \TC(S)$ is understood concretely by the work of \cite{BMS2}. In \cref{cor3}, by studying the animated de Rham Witt forms, we prove that if $S$ is a quasiregular semiperfect algebra, then $\Fil^n \TR(S)$ is given by $\tau_{\ge 2n} \TR(S)$. Relatedly, we use the animated de Rham--Witt forms studied in our paper to give an entirely different, simpler, and more conceptual proof of the following result of Darrell and Riggenbach \cite[Thm.~A]{Noah}.

\begin{proposition}
    Let $S$ be a quasiregular semiperfect algebra. Then $\pi_* \TR(S)$ is concentrated in even degrees.
\end{proposition}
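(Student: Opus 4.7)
The plan is to leverage the descending, exhaustive, complete filtration $\Fil^* \TR(S,p)$ on the $p$-typical part constructed earlier in the paper, whose associated graded pieces are
\[
    \gr^n \TR(S,p) \simeq \L W\Omega^n_S[n].
\]
Once we know each animated de Rham--Witt form $\L W\Omega^n_S$ is concentrated in homological degree exactly $n$, the shift places $\gr^n \TR(S,p)$ purely in the even degree $2n$, and completeness together with exhaustiveness of the filtration force $\pi_* \TR(S,p)$ to vanish in all odd degrees. The conclusion for the full $\TR(S)$ then follows via the $p$-typical decomposition $\TR(S) \simeq \prod_{I_p} \TR(S,p)$, available since $S$ is an $\F_p$-algebra; this is the $\TR$-analogue of the big Witt vectors splitting discussed for $n=1$ in the introduction.

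The heart of the argument is therefore showing $\L W\Omega^n_S$ is concentrated in homological degree $n$ when $S$ is quasiregular semiperfect. For such $S$ the cotangent complex $L_{S/\F_p}$ lives in homological degree $1$, with $L_{S/\F_p}[-1]$ a flat $S$-module; via the identity $\L\Omega^n_{S/\F_p} \simeq \Gamma^n_S\bigl(L_{S/\F_p}[-1]\bigr)[n]$, each derived Hodge component $\L\Omega^n_{S/\F_p}$ is concentrated in degree $n$. For the Witt-enhanced version, the cleanest route is to recall that the total animated de Rham--Witt object $\L W\Omega_S$ agrees with the crystalline cohomology $R\Gamma_{\crys}(S/W(\F_p))$, which is concentrated in homological degree $0$ for quasiregular semiperfect $S$. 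The conjugate filtration has graded pieces of the form $\L W\Omega^i_S[-i]$ (up to a Frobenius twist); since the total sits in degree $0$, each graded piece does as well, whence $\L W\Omega^i_S$ is concentrated in degree $i$.

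Given this concentration, the spectral sequence of the filtration $\Fil^* \TR(S,p)$ degenerates for parity reasons: in each total homological degree $k$ only $\gr^{k/2}$ can contribute, and only when $k$ is even. This yields $\pi_{2n+1}\TR(S,p) = 0$ for all $n$, and the decomposition $\TR(S) \simeq \prod_{I_p} \TR(S,p)$ transports the conclusion to $\TR(S)$.

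The main obstacle I anticipate is the concentration step for $\L W\Omega^n_S$. A priori the animated functor lands only in nonnegative homological degrees, and even the classical graded pieces of the Verschiebung filtration on $W\Omega^n$ involve $dV^k$-contributions coming from $\Omega^{n-1}$, which live in the ``wrong'' homological degree $n-1$ once animated; one must verify that these are appropriately absorbed, which is cleanest via the conjugate-filtration route above. The careful treatment of animated de Rham--Witt forms in \cref{derhamw} should supply the precise formal framework needed to make this rigorous.
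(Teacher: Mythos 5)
Your high-level strategy matches the paper exactly: equip $\TR(S,p)$ with the motivic filtration from \cref{trr}, show $\L W\Omega^n_S$ is concentrated in homological degree $n$ for quasiregular semiperfect $S$ (so that $\gr^n \TR(S,p) \simeq \L W\Omega^n_S[n]$ sits in even degree $2n$), and transport the conclusion to $\TR(S)$ via the $p$-typical decomposition. This is precisely the content of \cref{evenn} and \cref{cor3}, which the paper derives from \cref{trr} together with \cref{fixthis}.

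However, the step you yourself flag as the heart of the matter --- showing $\L W\Omega^n_S[-n]$ is discrete --- is where your argument has a genuine gap. First, the claim that ``the conjugate filtration has graded pieces of the form $\L W\Omega^i_S[-i]$ (up to a Frobenius twist)'' is not correct: the conjugate filtration on $\L\Omega^*_S$ (equivalently on $\widehat{R\Gamma}_{\crys}(S)/p$) has graded pieces $\wedge^i\L_{S^{(p)}/\F_p}[-i] \simeq \L\Omega^i_{S^{(p)}}[-i]$, involving derived Hodge pieces of the Frobenius twist, not de Rham--Witt forms. If you instead mean the stupid/slope filtration on the de Rham--Witt object, whose graded pieces genuinely are $\L W\Omega^i_S$ placed in cohomological slot $i$, then the inference ``the total sits in degree $0$, hence each graded piece does as well'' fails: the associated spectral sequence can perfectly well have entries off the zero line on the $E_1$-page which cancel under differentials, so discreteness of $R\Gamma_{\crys}(S)$ does not force discreteness of the graded pieces. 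This is exactly analogous to the fact that the classical slope spectral sequence does not degenerate integrally.

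The paper closes this gap by a more hands-on analysis (\cref{fixthis}): it first shows $\L W\Omega^i_S$ is derived $V$-complete (\cref{derV}), so by \cref{lemma1} it suffices to prove $(\L W\Omega^i_S/V)[-i]$ is discrete; then identifies $\L W\Omega^i_S/V \simeq \varprojlim_C Z_n\L\Omega^i_S$ (\cref{llll}); and finally proves the $Z_n\L\Omega^i_S[-i]$ are discrete with surjective transition maps $C$ (\cref{prop45}), which in turn rests on an explicit description of $Z_1\L\Omega^i_S[-i]$ as the intersection $\Fil^i_{\conj}\L\Omega^*_S \cap \Fil^i_{\Hodge}\L\Omega^*_S$ inside the discrete object $\L\Omega^*_S \simeq D_{S^\flat}(I)$ (\cref{coollemma}, \cref{sunday}). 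Your argument would need to be replaced by something along these lines; the appeal to crystalline cohomology alone does not suffice.
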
{}
Back to \cref{mainthm2intro}, one further needs to understand $\varprojlim_{k} \Fil^* \TC (S[t]/t^k),$ when $S$ is quasiregular semiperfect. We show that this is given by the ``odd filtration": 
\begin{equation}\label{sharpbound}
     \varprojlim_k \Fil^n \TC (S[t]/t^k) \simeq \tau_{\ge 2n-1}  \varprojlim_k \TC (S[t]/t^k)
\end{equation}(see \cref{prop1}). In order to show this, we require certain estimates on $\varprojlim _k \Z_p(n)(S[t]/t^k).$ To this end, we show the following, which, along with \cref{filtrationlemma}, implies \cref{sharpbound}.

\begin{proposition}
Let $S$ be a quasiregular semiperfect ring. Then $$ \varprojlim_k \Z_p(i)(S[t]/t^k) \in D_{[-1,0]} (\Z_p).$$    
\end{proposition}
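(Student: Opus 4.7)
The plan is to apply the defining fiber sequence for the $p$-adic Tate twists,
\[
\Z_p(i)(R) \longrightarrow \cN^{\ge i}\Prism_R \xrightarrow{\;\phi_i - 1\;} \Prism_R,
\]
to the quasisyntomic $\F_p$-algebras $R_k \coloneqq S[t]/t^k$ (each of which is quasisyntomic because $S$ is quasiregular semiperfect and $t^k$ is a regular element of $S[t]$), and then commute the fiber with $\varprojlim_k$. The proposition then reduces to showing that the derived inverse limits $\varprojlim_k \cN^{\ge i}\Prism_{R_k}$ and $\varprojlim_k \Prism_{R_k}$ are both concentrated in a single cohomological degree, since the fiber of a map between such complexes automatically lies in an interval of width one, which is precisely the bound $D_{[-1,0]}(\Z_p)$.

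The inputs for this come from the quasiregular semiperfect hypothesis on $S$: both $\Prism_S$ and its Nygaard-filtered pieces are honest rings, concentrated in cohomological degree zero. Writing $R_k \simeq S \otimes_{\F_p}^{\L} \F_p[t]/t^k$, one describes $\Prism_{R_k}$ via the prismatic envelope of $\Prism_S[t]$ along the regular element $t^k$, which is again concentrated in degree zero; a parallel description is available for the Nygaard-filtered pieces. The surjectivity of the transition maps $R_{k+1} \twoheadrightarrow R_k$ (with nilpotent kernel $(t^k)$) propagates to surjectivity on $H^0$ of these prismatic and Nygaard-filtered pieces, providing the Mittag--Leffler condition that forces the derived inverse limits to remain in cohomological degree zero, whence the conclusion.

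The main obstacle is verifying the Mittag--Leffler condition uniformly through the Nygaard filtration: while surjectivity of the transition maps is natural at the unfiltered prismatic level, the Nygaard-filtered pieces involve the divided Frobenius, and one needs to check that surjectivity persists at each filtration step. The technical heart is therefore a careful analysis of the conjugate (Hodge--Tate) graded pieces $\wedge^j L_{R_k/\F_p}[-j]$, using that $L_{S/\F_p}[-1]$ is flat (by quasiregular semiperfectness of $S$) and that $L_{\F_p[t]/t^k/\F_p}$ is a manageable two-term complex, in order to control both the cohomological range and the behavior of the transition maps uniformly in $k$.
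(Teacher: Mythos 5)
Your plan starts from the right place (the Nygaard--Frobenius fiber sequence defining $\Z_p(i)$, applied to $S[t]/t^k$ and passed to the limit), which matches the paper's strategy for $i>1$. But the core claim driving your argument --- that $\varprojlim_k \cN^{\ge i}\Prism_{R_k}$ and $\varprojlim_k \Prism_{R_k}$ are both concentrated in a single cohomological degree --- is false, and this is not a matter of verifying a Mittag--Leffler condition. Already at each finite level the relevant complexes have nontrivial $H^1$: the cotangent complex $\LL_{(R[t]/t^n)/\F_p}$ splits as $(t^n/t^{2n})[1] \oplus R[t]/t^n$, so $\wedge^s \LL_{(R[t]/t^n)/\F_p}[-s]$ has a summand in cohomological degree $1$ for $s\ge 2$, and this propagates to the conjugate/Hodge filtration pieces of $\dR(R[t]/t^n)$ and hence (mod $p$) to the crystalline and Nygaard-filtered objects. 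Indeed the paper's \cref{ven} and the lemma preceding \cref{ktt} show that $\varprojlim \dR(R[t]/t^n)$ and $\varprojlim \Fil^i_{\conj}\dR(R[t]/t^n)$ have nonzero $\pi_{-1}$ (e.g.\ $\varprojlim\dR(R[t]/t^n) \simeq R[[t]]^{(p)}\oplus R[[t]]^{(p)}[-1]$), and \cref{ktt} only establishes that $\varprojlim \Fil^i_{\mathrm{Nyg}}\widehat{R\Gamma}_{\crys}(R[t]/t^n)$ lies in $D_{[-1,0]}$, not in a single degree.

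Consequently the naive bound you can extract from the fiber sequence is only $\varprojlim \Z_p(i)(R[t]/t^k) \in D_{[-2,0]}$, which is strictly weaker than the statement. The paper's proof has to do genuine work to rule out $\pi_{-2}$: it shows that $\varphi_i - \iota$ induces a surjection on $\pi_{-1}$ after reducing mod $p$, and this surjectivity comes from decomposing the map into a Hodge-filtration contribution $\alpha$ and a conjugate-filtration contribution $\beta$ and then invoking, for $i>1$, the identity $\Fil^{i-2}_{\conj}\dR(R) + \Fil^i_{\Hodge}\widehat{\dR}(R) = \widehat{\dR}(R)$ (which is where quasiregular semiperfectness of $S$ is used in an essential way). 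None of this appears in your proposal. You also do not treat $i=0$ and $i=1$, which the paper handles by separate arguments (Artin--Schreier for $i=0$; the identification $\Z_p(1)(-)[1]\simeq R\Gamma_{\et}(-,\GG_m)^{\wedge_p}$ and the discreteness of $\Z_p(1)(R)$ for $i=1$). To repair your argument you would need to replace the ``single degree'' assertion with the correct $D_{[-1,0]}$ bounds and then supply the surjectivity-on-$\pi_{-1}$ step that the paper carries out.
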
{}

The arguments for proving the above proposition rely on studying properties of the Nygaard filtration on derived crystalline cohomology as well as the Hodge and conjugate filtration on derived de Rham cohomology. This result is the content of \cref{sec3}. In \cref{finalsec}, we put together the knowledge of all these filtrations to prove \cref{mainthm2intro}.

\subsection*{Conventions:} We will freely use the language of $\infty$-categories as in \cite{Lur09}, more specifically, the language of stable $\infty$-categories \cite{luriehigher}. For an ordinary commutative ring $R,$ we will let $D(R)$ denote the derived $\infty$-category of $R$-modules, so that it is naturally equipped with a $t$-structure and $D_{\ge 0}(R)$ (resp. $D_{\le 0} (R)$) denotes the connective (resp. coconnective) objects. For a map $W \to V$, $V/W$ will mean the cofiber unless otherwise mentioned. If $\mathcal C$ is a stable $\infty$-category, its associated category of pro-objects $\mathrm{Pro}(\mathcal{C})$ is also a stable $\infty$-category\footnote{Combine \cite[Rmk.~2.13]{LurX} and \cite[Thm.~4.5]{LurX} to obtain this statement.}. Let $\mathrm{Poly}_R$ denote the category of finitely generated polynomial algebras and $\mathrm{Alg}_R$ denote the category of ordinary commutative $R$-algebras. Then any functor $F: \mathrm{Poly}_R \to \mathcal{D}$ can be left Kan extended to a functor $\underline{F}: \mathrm{Ani}(\mathrm{Alg}_R) \to \mathcal{D},$ where $\mathrm{Ani}(\mathrm{Alg}_R)$ denotes the $\infty$-category of animated $R$-algebras. The functor $\underline{F},$ or $\underline{F} \mid_{\mathrm{Alg}_R}$ will be called the animation of $F.$ If $A$ is an algebra over a field of characteristic $p>0$, we will write $A^{(p^n)}$ to denote its $n$-fold Frobenius twist. $W(A)$ will denote the ring of $p$-typical Witt vectors.

\subsection*{Acknowledgement}We are very thankful to Bhargav Bhatt and Peter Scholze for helpful conversations and suggestions. We would further like to thank Luc Illusie, Akhil Mathew, Alexander Petrov, Noah Riggenbach, Arpon Raksit and Lucy Yang for helpful discussions regarding the paper. We are especially grateful to Luc Illusie for many detailed comments and questions on a draft version of this paper. The first author is supported by the PD Soros Fellowship and NSF DGE-2140743. The second author acknowledges support from the Max Planck Institute for Mathematics, Bonn and the University of British Columbia.

\section{Animated de Rham--Witt theory and filtration on $p$-typical $\TR$}\label{sec2}Let $k$ be a perfect field of characteristic $p>0.$ In this section, we discuss the ``motivic" filtration on $\mathrm{TR}(A,p)$ for a $k$-algebra $A.$ In order to do so, we discuss an animated form of the theory of de Rham--Witt forms from \cite{Luc1}. As demonstrated in \cite{Luc1}, the theory of Cartier operators (see \cite[\S~2]{Luc1}) play an important role in analyzing the de Rham--Witt forms via devissage. To this end, we will begin by discussing an animated form of Cartier operators, which would play a similar role in analyzing the animated de Rham--Witt forms. We will focus on understanding the animated Cartier operators in the case of quasiregular semiperfect algebras, which determines the entire theory via quasisyntomic descent.


\begin{construction}[Animated Cartier operators]
Let $A \in \mathrm{Alg}_k.$ For each $i \ge 0,$ we define $Z_n \L \Omega^i_A$ to be the animation of the functor $Z_n \Omega^i_{(\cdot)}: \mathrm{Poly}_k \to D(k)$ from \cite[\S~2.2.2]{Luc1}.
Similarly, we define $B_n \L \Omega^i_A$ to be the animation of the functor $B_n \Omega^i_{(\cdot)}: \mathrm{Poly}_k \to D(k)$ from \cite[\S~2.2.2]{Luc1}. There are canonical maps $B_n \L \Omega^i_A \to Z_n \L \Omega^i_A$ and $Z_n \L \Omega^i_A  \to \L \Omega^i_A.$ Animating the Cartier isomorphism (see \cite[\S~2]{Luc1}), we obtain a fiber sequence
\begin{equation}\label{buyfan}
    B_n \L \Omega^i_A \to Z_n \L \Omega^i_A \to \L\Omega^i _{A^{(p^n)}}.
\end{equation}By construction, for each $n \ge 0,$ we have the following natural fiber sequences in $D(\Z)$:
\begin{equation}\label{cart1}
  B_1 \LL \Omega^i_A \to  Z_{n+1} \L \Omega^i_A \xrightarrow[]{C} Z_{n} \L \Omega^i_{A}
\end{equation}\begin{equation}\label{cart2}
    B_1 \LL \Omega^i_A \to  B_{n+1} \L \Omega^i_A \xrightarrow[]{C} B_{n} \L \Omega^i_A.
\end{equation}{}
Note that we set $B_0 \L\Omega^i_A = 0$ and $Z_0 \L\Omega^i_A = \L\Omega^i_A.$ By construction, it follows that for all $n \ge 0$, we have $B_n \LL \Omega^0_A \simeq 0$ and $Z_n \LL \Omega^0_A \simeq A^{(p^n)}.$ Again, by construction, we have a fiber sequence
\begin{equation}\label{cartier3}
    Z_1 \L \Omega^i_A \to \L \Omega^i_A \xrightarrow[]{d} B_1 \LL \Omega^{i+1}_A.
\end{equation}

\end{construction}{}

\begin{proposition}
The functors from $\mathrm{Alg}_k^{\mathrm{op}}$ to $D(\Z)$ determined by $A \mapsto B_n \LL \Omega^i_A $ and $A \mapsto Z_n \LL \Omega^i_A$ are fpqc sheaves for all $n \ge 0$. 
\end{proposition}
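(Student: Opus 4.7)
The plan is to reduce the assertion to fpqc descent of derived de Rham cohomology and then to propagate that descent along the fiber sequences \cref{buyfan,cart1,cart2,cartier3} by a double induction. First I would note that $A \mapsto \LL\Omega^i_A$ is an fpqc sheaf for every $i \ge 0$: the cotangent complex $\LL_{A/k}$ satisfies fpqc descent, and hence so does each derived exterior power $\LL\Omega^i_A \simeq \wedge^i \LL_{A/k}$; composing with the Frobenius twist (which preserves fpqc covers) also shows that $A \mapsto \LL\Omega^i_{A^{(p^n)}}$ is a sheaf. Since the $\infty$-category of $D(\Z)$-valued fpqc sheaves is stable and closed under limits inside presheaves, in any fiber sequence of presheaves, two of the three terms being sheaves forces the third to be one.

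I would first handle $n = 1$ by induction on $i$. The base case $i = 0$ is immediate since $B_1 \LL\Omega^0_A = 0$ and $Z_1 \LL\Omega^0_A \simeq A^{(p)}$ are plainly sheaves. For the inductive step, assuming $B_1 \LL\Omega^i_A$ and $Z_1 \LL\Omega^i_A$ are fpqc sheaves, \cref{cartier3} identifies $B_1 \LL\Omega^{i+1}_A$ with the cofiber of $Z_1 \LL\Omega^i_A \to \LL\Omega^i_A$, a map between sheaves, so $B_1 \LL\Omega^{i+1}_A$ is a sheaf. The Cartier fiber sequence \cref{buyfan} at $n = 1$ and index $i+1$ then exhibits $Z_1 \LL\Omega^{i+1}_A$ as a fiber of a map between sheaves, completing the induction on $i$.

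With $n = 1$ in hand, the general case follows by induction on $n$: assuming $B_n \LL\Omega^i_A$ and $Z_n \LL\Omega^i_A$ are sheaves for every $i$, the fiber sequences \cref{cart1,cart2} exhibit $Z_{n+1}\LL\Omega^i_A$ and $B_{n+1}\LL\Omega^i_A$ as limits of diagrams of sheaves, which are therefore sheaves. The main obstacle to make rigorous is the input that $\LL\Omega^i_A$ is an fpqc sheaf in the variable $A$; once this is admitted, the remainder is a formal propagation along the Cartier and differential fiber sequences recorded above.
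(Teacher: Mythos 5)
Your proof is correct and takes essentially the same approach as the paper's: both reduce to fpqc descent for $\LL\Omega^i$ (via \cite[Thm.~3.1]{BMS2}) and then propagate sheafiness through the fiber sequences \cref{cartier3}, \cref{cart2} (and \cref{cart1}), and \cref{buyfan} using the two-out-of-three principle. The only difference is cosmetic organization — you run two separate inductions (first on $i$ for $n=1$, then on $n$), while the paper runs a single outer induction on $i$ with the induction on $n$ nested inside.
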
{}

\begin{proof}
  Note that the claim holds when $i=0.$ Let us suppose that for a fixed $i \ge 0,$ we have shown that the functors $A \mapsto B_n \LL \Omega^i_A $ and $A \mapsto Z_n \LL \Omega^i_A$ are fpqc sheaves. Since the functor $A \mapsto \LL\Omega_A^j$ satisfies fpqc descent for all $j$ (see \cite[Thm.~3.1]{BMS2}), by \cref{cartier3}, $A \mapsto B_1 \LL \Omega_A^{i+1}$ is an fpqc sheaf. By \cref{cart2}, $A \mapsto B_n \LL \Omega_A^{i+1}$ is an fpqc sheaf for all $n.$ By \cref{buyfan}, $A \mapsto Z_n \LL \Omega_A^{i+1}$ is an fpqc sheaf for all $n.$ Therefore, by induction on $i$, we are done.
\end{proof}{}

\begin{remark}
For $A \in \mathrm{Alg}_k,$ the objects $Z_n \LL \Omega^i_A$ and $B_n \LL \Omega^i_A$ can be naturally viewed as objects of $D(A^{(p^n)}).$ The fiber sequence \cref{buyfan} lifts to a fiber sequence in $D(A^{(p^n)}).$
\end{remark}{}

In order to further understand $Z_1\LL \Omega^i_A$ via descent, it would be useful to relate it to the conjugate filtration $\mathrm{Fil}^*_{\mathrm{conj}} \LL \Omega^*_A$ and the Hodge filtration $\Fil^*_{\Hodge} \LL \Omega^*_A$ on derived de Rham cohomology. Recall that when $A$ is a polynomial algebra, then $Z_1\LL \Omega^i_A \simeq Z_1  \Omega^i_A := \mathrm{Ker}(\Omega^i_A \xrightarrow{d} \Omega^{i+1}_A).$ We begin by observing the following:

\begin{proposition}\label{intersection}
Let $A$ be a polynomial algebra over $k.$ Then $$Z_1\Omega^i_A[-i] \simeq \Fil^i_{\conj} \LL \Omega^*_A \times_{\LL \Omega^*_A} \Fil^i_{\Hodge} \LL \Omega^*_A.$$
\end{proposition}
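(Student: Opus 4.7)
The plan is to identify each of the three objects appearing in the fiber product with concrete chain complexes, and then to compute the homotopy pullback as a literal intersection of subcomplexes. Since $A$ is a polynomial algebra over $k$, the cotangent complex $\L_{A/k}$ is simply the module of K\"ahler differentials $\Omega^1_{A/k}$, so $\LL\Omega^i_A \simeq \Omega^i_A$ and the derived de Rham complex $\LL\Omega^*_A$ coincides with the classical de Rham complex $\Omega^*_A$. Under this identification, the Hodge filtration $\Fil^i_{\Hodge}\LL\Omega^*_A$ is the brutal truncation $\sigma^{\ge i}\Omega^*_A$ sitting inside $\Omega^*_A$ as an honest subcomplex. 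Similarly, I would verify that $\Fil^i_{\conj}\LL\Omega^*_A$ identifies with the canonical truncation $\tau_{\le i}\Omega^*_A$: the conjugate filtration is the increasing filtration whose graded pieces are $\LL\Omega^j_{A^{(p)}}[-j] \simeq \Omega^j_{A^{(p)}}[-j]$ for $j \le i$, and the classical Cartier isomorphism $H^j(\Omega^*_A) \simeq \Omega^j_{A^{(p)}}$ exhibits $\tau_{\le i}\Omega^*_A \hookrightarrow \Omega^*_A$ as precisely this increasing filtration.

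With these identifications the fiber product reduces to the homotopy pullback of $\tau_{\le i}\Omega^*_A \hookrightarrow \Omega^*_A \hookleftarrow \sigma^{\ge i}\Omega^*_A$ in $D(k)$. I would then observe that the sum of these two subcomplexes is all of $\Omega^*_A$: in degree $j<i$ the canonical truncation already contains everything, in degree $j>i$ the brutal truncation does, and in degree $i$ the brutal truncation contributes the full $\Omega^i_A$. Consequently the difference map $\tau_{\le i}\Omega^*_A \oplus \sigma^{\ge i}\Omega^*_A \to \Omega^*_A$ is a degreewise surjection of chain complexes, so the derived pullback agrees with the strict intersection. That intersection is zero in every cohomological degree except $i$, where it equals $\ker\bigl(d\colon \Omega^i_A \to \Omega^{i+1}_A\bigr) = Z_1\Omega^i_A$, yielding $Z_1\Omega^i_A[-i]$, as claimed.

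The main potential obstacle is the rigorous identification of $\Fil^i_{\conj}\LL\Omega^*_A$ with $\tau_{\le i}\Omega^*_A$ at the level of filtered objects rather than merely on associated graded pieces. For a polynomial algebra this should however be essentially formal: $\Omega^*_A$ is a bounded complex of free $A$-modules with no higher homotopies to worry about, and an increasing filtration by subcomplexes with the described graded pieces agreeing via Cartier is forced to be the canonical truncation $\tau_{\le i}$.
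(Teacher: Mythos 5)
Your proof is correct and follows essentially the same route as the paper. Both proofs begin by identifying $\Fil^i_{\Hodge}\LL\Omega^*_A$ with the brutal truncation $\Omega^{\ge i}_A$ and $\Fil^i_{\conj}\LL\Omega^*_A$ with the canonical truncation (the paper writes $\tau_{\ge -i}\Omega^*_A$ in homological indexing, which is your $\tau_{\le i}\Omega^*_A$ in cohomological indexing). The only difference lies in how the homotopy pullback is computed at the end: you observe that the difference map $\tau_{\le i}\Omega^*_A \oplus \sigma^{\ge i}\Omega^*_A \to \Omega^*_A$ is a degreewise surjection, so the homotopy pullback is the strict intersection $\tau_{\le i}\Omega^*_A \cap \sigma^{\ge i}\Omega^*_A = Z_1\Omega^i_A[-i]$; the paper instead uses the fiber sequence $\Omega^{\ge i}_A \to \Omega^*_A \to \Omega^{\le i-1}_A$ to rewrite the pullback as $\mathrm{fib}\bigl(\tau_{\le i}\Omega^*_A \to \Omega^{\le i-1}_A\bigr)$ and computes homotopy groups. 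These are interchangeable and equally standard. Your caveat about identifying $\Fil^i_{\conj}$ with the canonical truncation as a filtered object, rather than merely on graded pieces, is a legitimate subtlety, but the paper also treats this identification as known for polynomial algebras (via the cited Cartier-isomorphism reference), so you are on the same footing.
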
{}

\begin{proof}
    Since $A$ is a polynomial algebra over $k,$ we have $\Fil ^i_{\conj} \LL \Omega^*_A \simeq \tau_{\ge -i} \Omega^*_A$ and $\Fil^i_{\Hodge} \LL \Omega_A^* \simeq \Omega_A^{\ge i}.$ Note that there is a natural map $Z_1\Omega_A^i[-i] \simeq \tau_{\ge -i} \Omega_A^{\ge i} \to \Fil^i_{\conj} \LL \Omega^*_A \times_{\LL \Omega^*_A} \Fil^i_{\Hodge} \LL \Omega^*_A. $ Since we have a fiber sequence $\Omega_A^{\ge i} \to \Omega_A^* \to \Omega_A^{\le i-1},$ we obtain a natural isomorphism $$\Fil^i_{\conj} \LL \Omega^*_A \times_{\LL \Omega^*_A} \Fil^i_{\Hodge} \LL \Omega^*_A \xrightarrow{\simeq} \mathrm{fib} (\Fil^i_{\conj} \LL \Omega_A^* \to \Omega_A^{\le i-1}).$$ By computing homotopy groups, one sees that the map $Z_1\Omega_A^i[-i] \to \mathrm{fib} (\Fil^i_{\conj} \LL \Omega_A^* \to \Omega_A^{\le i-1})$ is an isomorphism, which finishes the proof.
\end{proof}{}
By animating \cref{intersection}, we obtain the following:
\begin{corollary}\label{sunday}
    Let $A \in \mathrm{Alg}_k.$ Then $$Z_1\LL\Omega^i_A[-i] \simeq \Fil^i_{\conj} \LL \Omega^*_A \times_{\LL \Omega^*_A} \Fil^i_{\Hodge} \LL \Omega^*_A.$$
\end{corollary}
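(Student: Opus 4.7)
The plan is to obtain \cref{sunday} by animating \cref{intersection}, i.e., by applying left Kan extension along the inclusion $\mathrm{Poly}_k \hookrightarrow \mathrm{Ani}(\mathrm{Alg}_k)$ to the equivalence of functors $\mathrm{Poly}_k \to D(k)$ given by
\[
Z_1\Omega^i_{(-)}[-i] \;\longisoto\; \Fil^i_{\conj} \LL \Omega^*_{(-)} \times_{\LL \Omega^*_{(-)}} \Fil^i_{\Hodge} \LL \Omega^*_{(-)}
\]
established in \cref{intersection}.

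First, I would record that the left hand side animates to $Z_1\LL\Omega^i_A[-i]$ essentially by the definition of $Z_1\LL\Omega^i$ given in the previous construction. Then I would handle the right hand side: on $\mathrm{Poly}_k$ the functors $\LL\Omega^*_{(-)}$, $\Fil^i_{\conj} \LL\Omega^*_{(-)}$, and $\Fil^i_{\Hodge}\LL\Omega^*_{(-)}$ already coincide with their classical counterparts, and by definition the functors on $\mathrm{Alg}_k$ denoted $\LL\Omega^*_A$, $\Fil^i_{\conj}\LL\Omega^*_A$, and $\Fil^i_{\Hodge}\LL\Omega^*_A$ are the left Kan extensions of these restrictions. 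Thus it suffices to show that the pullback in the right hand side commutes with the left Kan extension functor.

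The key point — and the only thing that requires any care — is that left Kan extension into the stable $\infty$-category $D(k)$ is computed by sifted colimits, and in a stable $\infty$-category sifted (in fact all small) colimits commute with finite limits. In particular, the pullback square defining the right hand side is preserved by animation, so we get
\[
\underline{\,Z_1\Omega^i_{(-)}[-i]\,}(A) \;\simeq\; \underline{\Fil^i_{\conj}\LL\Omega^*_{(-)}}(A) \times_{\underline{\LL\Omega^*_{(-)}}(A)} \underline{\Fil^i_{\Hodge}\LL\Omega^*_{(-)}}(A),
\]
which by the identifications above is exactly the desired equivalence.

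The main (and essentially only) obstacle is the interchange of the pullback with animation; everything else is bookkeeping about the definitions of the animated objects. Since pullbacks are finite limits and we land in a stable $\infty$-category, this interchange is automatic, so the corollary follows directly from \cref{intersection} with no additional input.
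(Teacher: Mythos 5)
Your proof is correct and is exactly what the paper means by ``animating \cref{intersection}'': the only substantive point is that the pullback (a finite limit, hence a shifted fiber of a difference map) commutes with the sifted colimits computing left Kan extension into the stable $\infty$-category $D(k)$, which you correctly identify and justify. The paper leaves all of this implicit in a one-line proof, so your write-up matches the intended argument.
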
{}

\begin{proposition}\label{perfectringcart}
    Let $A$ be a perfect ring. Then $B_n \LL \Omega^i_A \simeq Z_n \LL \Omega^i_A \simeq 0$ for all $i >0.$
\end{proposition}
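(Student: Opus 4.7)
The plan is to combine the vanishing of the cotangent complex for perfect rings with the Cartier-type fiber sequences \cref{buyfan}, \cref{cart1}, \cref{cart2}, and \cref{cartier3} in a double induction (first on $i$, then on $n$). The starting observation is that for a perfect $\F_p$-algebra $A$ over the perfect base $k$, the cotangent complex $\L_{A/k}$ vanishes: $\L_{k/\F_p}=0$ and $\L_{A/\F_p}=0$ by perfectness, and the transitivity triangle for $\F_p \to k \to A$ then forces $\L_{A/k}=0$. Since $\L\Omega^i_A$ is the animation of $P \mapsto \wedge^i_P \Omega^1_P$, equivalently the $i$-th derived exterior power of $\L_{A/k}$, this yields $\L\Omega^i_A \simeq 0$ for every $i \geq 1$. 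The same vanishing holds with $A$ replaced by $A^{(p^n)}$, since Frobenius twists of perfect rings are perfect.

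Next, I would handle the case $n=1$. In degree $i=0$, the map $Z_1 \L\Omega^0_A \to \L\Omega^0_A$ appearing in \cref{cartier3} is the animation of the inclusion $\ker d \hookrightarrow P$ for polynomial $P$, i.e., the relative Frobenius $A^{(p)} \to A$; for $A$ perfect this is an isomorphism, so \cref{cartier3} with $i=0$ gives $B_1 \L\Omega^1_A = 0$. Feeding this into the Cartier fiber sequence \cref{buyfan} with $n=1$, $i=1$ and using $\L\Omega^1_{A^{(p)}} = 0$ yields $Z_1 \L\Omega^1_A = 0$. Now induct on $i$: assuming $Z_1 \L\Omega^{i-1}_A = 0$ and $\L\Omega^{i-1}_A = 0$, the sequence \cref{cartier3} forces $B_1 \L\Omega^i_A = 0$, and \cref{buyfan} together with $\L\Omega^i_{A^{(p)}} = 0$ then forces $Z_1 \L\Omega^i_A = 0$.

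The general case is an induction on $n$ using \cref{cart1} and \cref{cart2}. Since $B_1 \L\Omega^i_A = 0$ for $i > 0$, \cref{cart1} collapses to $Z_{n+1} \L\Omega^i_A \simeq Z_n \L\Omega^i_A$; starting from $Z_0 \L\Omega^i_A = \L\Omega^i_A = 0$ for $i \geq 1$, this yields $Z_n \L\Omega^i_A = 0$ for all $n$. A parallel argument using \cref{cart2} starting from $B_0 \L\Omega^i_A = 0$ gives $B_n \L\Omega^i_A = 0$.

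The only genuine obstacle I anticipate is unpacking the identifications at the base of the induction: that the animated object $Z_1 \L\Omega^0_A$ really is $A^{(p)}$ with structure map the relative Frobenius, and that derived exterior powers of a vanishing cotangent complex genuinely vanish. Both are formal consequences of animation from $\mathrm{Poly}_k$, since each identification holds on the nose for polynomial algebras and all relevant constructions commute with sifted colimits; once they are spelled out, the rest is a purely mechanical chain of the given fiber sequences.
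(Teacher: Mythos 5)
Your proof is correct and is essentially the same as the paper's, modulo cosmetic reorganization: you run the induction on $i$ upward from the base case $i=1$, whereas the paper phrases it as a descending reduction to the same base case $\mathrm{can}\colon Z_1\LL\Omega^0_A \to \LL\Omega^0_A$, and the paper reduces to showing only $B_n\LL\Omega^i_A = 0$ (via \cref{buyfan}) before invoking \cref{cart2}, whereas you use \cref{cart1} and \cref{cart2} in parallel. The key steps — vanishing of $\LL\Omega^i_A$ for perfect $A$, identifying $Z_1\LL\Omega^0_A \to \LL\Omega^0_A$ with the (iso)morphism of Frobenius, and then chaining \cref{cartier3}, \cref{buyfan}, \cref{cart1}, \cref{cart2} — match the paper exactly.
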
{}

\begin{proof}
    Since $A$ is perfect, we have $\LL \Omega_A^i =0$ for $i>0.$ Therefore, using \cref{buyfan}, it suffices to prove that $B_n \LL \Omega^i_A \simeq 0$ for $i>0.$ By \cref{cart2}, this reduces to the case $n=1.$ Using \cref{cartier3} and descending induction on $i,$ it suffices to show that the map $\mathrm{can}: Z_1 \LL \Omega^0_A \to \LL \Omega^0_A$ is an isomorphism, which follows because $A$ is perfect.
\end{proof}{}

Before proceeding further, let us recall two definitions from \cite{BMS2}.

\begin{definition}[{\cite[Def.~4.10]{BMS2}}]
    Let $f: A \to B$ be a map of $\F_p$-algebras. Then $f$ is said to be quasisyntomic if it is flat and the cotangent complex $\LL_{B/A}$ has Tor amplitude in homological degrees $[0,1].$
\end{definition}{}

\begin{proposition}[{\cite[Def.~8.8]{BMS2}}]
    An $\F_p$-algebra $S$ is said to be semiperfect if the natural map $S^\flat:= \varprojlim_{x \to x^p} S \to S$ is surjective. $S$ is called quasiregular semiperfect if $S$ is semiperfect and $\L_{S/\F_p}$ is a flat $S$-module supported in homological degree 1.
\end{proposition}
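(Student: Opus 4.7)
The final statement is in fact a definition quoted verbatim from \cite[Def.~8.8]{BMS2}, not a proposition with mathematical content, so there is no theorem to prove — only conventions to make precise. The plan is therefore limited to confirming that each clause is well-posed and to pointing out why this class of rings merits being singled out.

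First, I would verify that $S^\flat = \varprojlim_{x \mapsto x^p} S$ carries a canonical commutative $\F_p$-algebra structure. This is immediate because the Frobenius $x \mapsto x^p$ is a ring homomorphism on any $\F_p$-algebra, so the inverse limit inherits ring operations coordinate-wise. The projection $S^\flat \to S$ onto the first component is then a ring map, and its surjectivity is equivalent to the Frobenius endomorphism of $S$ being surjective: one direction is trivial (if $(x_0,x_1,\dots) \in S^\flat$ hits $x_0 = x$, then $x_1^p = x$), and the other uses the axiom of dependent choice to iteratively lift $x$ to a compatible system of $p^n$-th roots. Second, for the quasiregularity clause, one need only observe that the hypothesis already stipulates $\L_{S/\F_p}$ to be concentrated in homological degree $1$, so flatness is simply flatness of the single $S$-module $\L_{S/\F_p}[-1]$; there is nothing further to check.

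The main ``obstacle,'' then, is conceptual rather than technical: one should explain why this class of rings is worth isolating at this point in the paper. That payoff is provided by \cite[Prop.~4.31]{BMS2}, which shows that quasiregular semiperfect $\F_p$-algebras form a basis for the quasisyntomic site. It is this basis property — not the definition itself — that will be exploited throughout \cref{sec2}: via faithfully flat descent, the filtered objects $\Fil^\ast \TR(-,p)$ constructed on general quasisyntomic $\F_p$-algebras are controlled by their values on this concrete subclass, where (as will be shown) the animated de Rham--Witt forms and the relevant Cartier-theoretic objects admit a transparent description.
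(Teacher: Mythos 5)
You correctly recognize that, despite being wrapped in a \verb|proposition| environment (evidently for numbering convenience), this is a verbatim recollection of \cite[Def.~8.8]{BMS2} and therefore carries no proof obligation; the paper itself supplies none. Your sanity checks — that $S^\flat$ inherits a ring structure coordinate-wise because Frobenius is a ring endomorphism in characteristic $p$, that surjectivity of $S^\flat \to S$ is equivalent to surjectivity of Frobenius on $S$ (with dependent choice used to build compatible towers of $p$-power roots), and that the quasiregularity clause is a condition on the single $S$-module $\pi_1 \L_{S/\F_p}$ — are all correct, and your remark that the real payoff is \cite[Prop.~4.31]{BMS2} (qrsp rings form a basis for the quasisyntomic site, enabling the descent arguments in the rest of the paper) accurately captures why the definition is inserted at this point. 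Nothing to correct.
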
{}

\begin{proposition}\label{coollemma}
    Let $A$ let a quasiregular semiperfect algebra. Then $Z_1 \LL \Omega^i_A[-i]$ is discrete for all $i \ge 0.$
\end{proposition}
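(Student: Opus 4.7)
The plan is to apply \cref{sunday}, which expresses $Z_1\LL\Omega^i_A[-i]$ as the pullback $\Fil^i_{\conj}\LL\Omega^*_A \times_{\LL\Omega^*_A}\Fil^i_{\Hodge}\LL\Omega^*_A$ in the derived category. This reduces the problem to showing that each of $\Fil^i_{\conj}\LL\Omega^*_A$, $\Fil^i_{\Hodge}\LL\Omega^*_A$, and $\LL\Omega^*_A$ is discrete for quasiregular semiperfect $A$, and that the combined map $\Fil^i_{\conj}\oplus\Fil^i_{\Hodge}\to\LL\Omega^*_A$ is surjective on $\pi_0$.

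For the discreteness claims, I first observe that by definition of qrsp, $\LL_{A/\F_p}$ is a flat $A$-module placed in homological degree $1$, so the derived exterior power $\LL\Omega^j_A$ is concentrated in degree $j$ (it computes to $\Gamma^j_A(\LL_{A/\F_p}[-1])[j]$), whence $\LL\Omega^j_A[-j]$ is discrete for every $j\ge 0$. The Hodge filtration is complete with graded pieces $\gr^j_{\Hodge}\LL\Omega^*_A\simeq\LL\Omega^j_A[-j]$, so $\LL\Omega^*_A$ itself is discrete. The quotient $\LL\Omega^*_A/\Fil^i_{\Hodge}\LL\Omega^*_A$ is a finite successive extension of the discrete pieces $\LL\Omega^j_A[-j]$ for $0\le j<i$, hence discrete, making $\Fil^i_{\Hodge}\LL\Omega^*_A$ discrete as the fiber of a surjection between discrete objects. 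Similarly, since the conjugate filtration is exhaustive with discrete graded pieces, the quotient $\LL\Omega^*_A/\Fil^i_{\conj}\LL\Omega^*_A$ is a filtered colimit of discrete objects, and so $\Fil^i_{\conj}\LL\Omega^*_A$ is also discrete.

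Equivalently to the pullback description, $Z_1\LL\Omega^i_A[-i]$ is the fiber of the composite $\Fil^i_{\Hodge}\LL\Omega^*_A\to\LL\Omega^*_A\to\LL\Omega^*_A/\Fil^i_{\conj}\LL\Omega^*_A$; it therefore suffices to show this composite is surjective on $\pi_0$, which would exhibit $Z_1\LL\Omega^i_A[-i]$ as the fiber of a surjection between discrete objects. This surjectivity I propose to check by filtering both source and target by the Hodge filtration inherited from $\LL\Omega^*_A$: on associated graded, the induced map is the identity on $\LL\Omega^j_A[-j]$ for $j>i$ (and kills the piece $\LL\Omega^i_A[-i]$ of the source), so it is surjective on associated graded. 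Completeness of the Hodge filtration on both sides then yields surjectivity on the total objects.

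The main obstacle is the last step, namely rigorously establishing the transversality of the Hodge and conjugate filtrations at the level of $\pi_0$, since this requires the Hodge filtration to descend compatibly to $\LL\Omega^*_A/\Fil^i_{\conj}\LL\Omega^*_A$. As a possible alternative route, one may attempt a direct inductive argument on $i$ using the fiber sequences \cref{buyfan} and \cref{cartier3}, starting from the base case $Z_1\LL\Omega^0_A\simeq A^{(p)}$ (which is discrete) together with the surjectivity of Frobenius on semiperfect $A$, bootstrapping discreteness of $B_1\LL\Omega^i_A[-i]$ and $Z_1\LL\Omega^i_A[-i]$ simultaneously.
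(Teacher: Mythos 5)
Your overall strategy matches the paper's: invoke \cref{sunday} to identify $Z_1\LL\Omega^i_A[-i]$ with the pullback of the Hodge and conjugate filtrations over $\LL\Omega^*_A$, establish that all three objects are discrete, and then reduce the discreteness of the fiber to the statement that the combined map $\Fil^i_{\conj}\oplus\Fil^i_{\Hodge}\to\LL\Omega^*_A$ is surjective. The gap is exactly where you suspected: you do not actually establish that surjectivity. Your proposed device --- putting ``the Hodge filtration inherited from $\LL\Omega^*_A$'' on the target $\LL\Omega^*_A/\Fil^i_{\conj}\LL\Omega^*_A$ and comparing on associated graded --- does not go through as stated. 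The conjugate filtration is not a subfiltration of the Hodge filtration, so the Hodge filtration does not descend to that quotient in a way that would make the associated graded pieces be $\LL\Omega^j_A[-j]$ for $j>i$ (already for a polynomial algebra, the degree $i{+}1$ graded piece of $\tau^{\ge i+1}\Omega^*$ is $\Omega^{i+1}/\mathrm{im}(d)$, not $\Omega^{i+1}$). More seriously, the Hodge filtration on the non-completed $\dR(A)$ is not complete --- completeness holds only for $\widehat{\dR}$ --- so the ``completeness then yields surjectivity on the total objects'' step, as well as your earlier claim that completeness forces $\LL\Omega^*_A$ itself to be discrete, are unjustified. (Discreteness of $\LL\Omega^*_A$ for quasiregular semiperfect $A$ is a nontrivial fact, namely the identification $\LL\Omega^*_A\simeq D_{A^\flat}(I)$ of \cite[Prop.~8.12]{BMS2}; it does not follow from discreteness of the Hodge graded pieces alone.) Your alternative inductive route via \cref{buyfan} and \cref{cartier3} also does not close on its own, since \cref{cartier3} would require knowing that $Z_1\LL\Omega^i_A\to\LL\Omega^i_A$ induces an injection before you can conclude $B_1\LL\Omega^{i+1}_A[-i-1]$ is discrete, and that injectivity is not available at the outset.

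What the paper does instead, and what you should do, is use the explicit description from \cite[\S~8]{BMS2}: with $I=\Ker(A^\flat\to A)$, one has $\LL\Omega^*_A\simeq D_{A^\flat}(I)$ discretely, $\Fil^i_{\Hodge}$ is the PD ideal generated by divided-power monomials $a_1^{[l_1]}\cdots a_m^{[l_m]}$ of total weight $\sum l_u\ge i$, and $\Fil^i_{\conj}$ is the $A^\flat$-submodule generated by those of weight $\sum l_u<(i+1)p$. From this it is immediate that $\Fil^i_{\Hodge}\hookrightarrow\LL\Omega^*_A$ is injective and that $\Fil^i_{\conj}\to\LL\Omega^*_A/\Fil^i_{\Hodge}$ is surjective (every class in the quotient is represented by monomials of weight $<i$, which all lie in $\Fil^i_{\conj}$), so the fiber --- which \cref{sunday} identifies with $Z_1\LL\Omega^i_A[-i]$ --- is discrete. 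This explicit monomial description is precisely the missing ingredient that makes the surjectivity transparent without any filtration-completeness argument.
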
{}

\begin{proof}
    Let $I:= \mathrm{Ker}(A^\flat \to A).$ By \cite[Prop.~8.12]{BMS2}, one knows that $\LL \Omega^*_A \simeq D_{A^\flat}(I).$ Further, the conjugate filtration $\mathrm{Fil}^i_{\mathrm{conj}}\LL \Omega^*_A$ is also discrete and is given by the $A^\flat$-submodule of $D_{A^\flat}(I)$ generated by elements of the form $a_1^{[l_1]} \cdots  a_m^{[l_m]}$ such that $m \ge 0,$ $a_i \in I$ and $\sum _{u=1}^{m} l_u < (i+1)p.$ The Hodge filtration on $\LL\Omega^*_A$ identifies with the divided power filtration on $D_{A^\flat}(I),$ i.e., $\Fil^i_{\Hodge} \LL\Omega_A^*$ is the ideal generated by elements of the form $a_1^{[l_1]} \cdots  a_m^{[l_m]}$ such that $m \ge 0,$ $a_i \in I$ and $\sum _{u=1}^{m} l_u \ge i.$ In particular, note that the map $\Fil^i_{\Hodge} \LL\Omega^*_A \to \LL \Omega_A^*$ is injective. 

From the above description, we see that the composite map $$\Fil^i_{\conj} \LL\Omega^*_A \to \LL\Omega^*_A \to \LL\Omega^*_A / \Fil^i_{\Hodge} \LL\Omega^*_A$$ is surjective. This implies that the fiber of $\Fil^i_{\conj} \LL\Omega^*_A  \to \LL\Omega^*_A / \Fil^i_{\Hodge} \LL\Omega^*_A$ must be discrete. By \cref{sunday}, the fiber identifies with $Z_1\LL\Omega_A^i [-i],$ which finishes the proof.
\end{proof}

\begin{proposition}\label{prop45}
    Let $A$ be a quasiregular semiperfect algebra. Then $B_n \LL \Omega^i_A[-i]$ and $Z_n \LL \Omega_A^i[-i]$ are discrete for all $n, i \ge 0.$
\end{proposition}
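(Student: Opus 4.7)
The plan is to prove both discreteness statements simultaneously by induction on $n$, with $n=0$ and $n=1$ as base cases and the fiber sequences \cref{cart1} and \cref{cart2} providing the inductive step. For $n=0$, $B_0\LL\Omega^i_A = 0$ is trivially discrete, and $Z_0\LL\Omega^i_A[-i] = \LL\Omega^i_A[-i]$ is discrete for quasiregular semiperfect $A$ by the same analysis used in the proof of \cref{coollemma} (the Hodge graded pieces of $\LL\Omega^*_A \simeq D_{A^\flat}(I)$ are discrete). The discreteness of $Z_1\LL\Omega^i_A[-i]$ is the content of \cref{coollemma} itself, so the only nontrivial base case remaining is $B_1\LL\Omega^i_A[-i]$.

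The case $i=0$ is immediate since $B_1\LL\Omega^0_A = 0$. For $i \ge 1$, I would apply \cref{cartier3} at index $i-1$ and shift by $-i$ to obtain a fiber sequence $Z_1\LL\Omega^{i-1}_A[-i] \to \LL\Omega^{i-1}_A[-i] \to B_1\LL\Omega^i_A[-i]$ in which the first two terms are discrete modules placed in degree $-1$. A direct long exact sequence computation shows that $B_1\LL\Omega^i_A[-i]$ lies in $D_{[-1,0]}$, with $\pi_{-1}$ the cokernel and $\pi_0$ the kernel of the map of discrete modules $f\colon Z_1\LL\Omega^{i-1}_A[-(i-1)] \to \LL\Omega^{i-1}_A[-(i-1)]$; discreteness of $B_1\LL\Omega^i_A[-i]$ is therefore equivalent to surjectivity of $f$. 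Here I would invoke \cref{sunday} together with the explicit descriptions from the proof of \cref{coollemma}: the source of $f$ is $\Fil^{i-1}_{\conj}\LL\Omega^*_A \cap \Fil^{i-1}_{\Hodge}\LL\Omega^*_A$ as a submodule of $D_{A^\flat}(I)$, its target is the Hodge graded piece $\Fil^{i-1}_{\Hodge}\LL\Omega^*_A/\Fil^i_{\Hodge}\LL\Omega^*_A$, and $f$ is the composite $\Fil^{i-1}_{\conj}\cap\Fil^{i-1}_{\Hodge} \hookrightarrow \Fil^{i-1}_{\Hodge} \twoheadrightarrow \Fil^{i-1}_{\Hodge}/\Fil^i_{\Hodge}$. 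This composite is surjective because any class in the Hodge graded piece admits a lift to a monomial $a_1^{[l_1]}\cdots a_m^{[l_m]}$ of PD-weight $\sum l_u = i-1$, and such an element automatically lies in $\Fil^{i-1}_{\conj}\LL\Omega^*_A$ since $i-1 < ip$.

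For the inductive step, assume the result for some $n \ge 1$. Shifting \cref{cart1} and \cref{cart2} by $-i$ gives fiber sequences $B_1\LL\Omega^i_A[-i] \to Z_{n+1}\LL\Omega^i_A[-i] \to Z_n\LL\Omega^i_A[-i]$ and $B_1\LL\Omega^i_A[-i] \to B_{n+1}\LL\Omega^i_A[-i] \to B_n\LL\Omega^i_A[-i]$ whose outer terms are discrete by the base case and the induction hypothesis, so the long exact sequence immediately forces the middle term in each to be discrete. The main obstacle is thus the surjectivity of $f$ in the base case for $B_1$, which fundamentally relies on the explicit form of the conjugate and Hodge filtrations for quasiregular semiperfect algebras recorded in the proof of \cref{coollemma}.
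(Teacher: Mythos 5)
Your proof is correct, and it is a close cousin of the paper's argument but follows a slightly different route for the key base case. To show $B_1\LL\Omega^i_A[-i]$ is discrete, the paper uses the Cartier fiber sequence \cref{buyfan}, $B_1\LL\Omega^i_A \to Z_1\LL\Omega^i_A \to \LL\Omega^i_{A^{(p)}}$, and reduces to showing that $Z_1\LL\Omega^i_A[-i] \simeq \Fil^i_{\conj}\cap\Fil^i_{\Hodge} \to \gr^i_{\conj}\LL\Omega^*_A$ is surjective; this holds because any divided power monomial of weight in $[ip,(i+1)p)$ already has weight $\ge i$ and so lies in $\Fil^i_{\Hodge}$. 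You instead use the differential sequence \cref{cartier3}, presenting $B_1\LL\Omega^i_A$ as $\mathrm{cofib}(Z_1\LL\Omega^{i-1}_A \to \LL\Omega^{i-1}_A)$, and reduce to surjectivity of $Z_1\LL\Omega^{i-1}_A[-(i-1)] \simeq \Fil^{i-1}_{\conj}\cap\Fil^{i-1}_{\Hodge} \to \gr^{i-1}_{\Hodge}\LL\Omega^*_A$, which holds because a monomial of weight exactly $i-1$ automatically lies in $\Fil^{i-1}_{\conj}$ (since $i-1 < ip$). These are two ``dual'' surjectivity statements flowing from the same explicit description of the two filtrations in the quasiregular semiperfect case, so neither is more elementary than the other. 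For the inductive step you use \cref{cart1} to propagate discreteness of $Z_n$, while the paper invokes \cref{buyfan} once more, using that $\LL\Omega^i_{A^{(p^n)}}[-i]$ is discrete; both work equally well since in each case the middle term of a fiber sequence with discrete outer terms is discrete. The only minor point worth flagging is that your reduction identifies the map $f$ as the composite through $\Fil^{i-1}_{\Hodge}$; this is indeed what the canonical map $Z_1\LL\Omega^{i-1}_A \to \LL\Omega^{i-1}_A$ becomes under \cref{sunday}, as one checks in the polynomial case ($\tau_{\ge -(i-1)}\Omega^{\ge i-1}_A \to \gr^{i-1}_{\Hodge}$ is the inclusion $Z_1\Omega^{i-1}_A \hookrightarrow \Omega^{i-1}_A$), and then by animation, but a reader would benefit from this being spelled out.
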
{}

\begin{proof}
Note that $B_0 \LL \Omega^i_A = 0$ by construction, and $Z_0 \LL \Omega^i_A [-i]\simeq \LL \Omega^i_A[-i] \simeq \Gamma^i_A (I/I^2),$ where $I := \mathrm{Ker}(A^\flat \to A),$ thus the claim holds when $n=0.$ By the proof of \cref{coollemma}, we see that if $A$ is a quasiregular semiperfect algebra, one has a natural identification $Z_1 \LL\Omega_A^i[-i] \simeq \Fil^i_{\conj} \LL \Omega^*_A \cap \Fil^i_{\Hodge} \LL \Omega^*_A$ owing to the discreteness of all objects involved. The explicit description of $\Fil^i_{\conj} \LL \Omega^*_A $ and $\Fil^i_{\Hodge} \LL \Omega^*_A$ in this case implies that the natural map $Z_1 \LL \Omega^i_A[-i] \to \mathrm{gr}^i_{\conj} \LL \Omega^*_A$ is surjective. Using the fiber sequence \cref{buyfan}, it follows that $B_1 \LL \Omega^i_A[-i]$ is discrete. By \cref{cart2}, it inductively follows that $B_n \LL \Omega^i_A[-i]$ is discrete for all $n \ge 1.$ Applying \cref{buyfan} again, and using the fact that $\LL \Omega^i_A [-i] $ is discrete, we see that $Z_n \LL \Omega^i_A[-i]$ is discrete for all $n \ge 1.$
\end{proof}{}

Now, we will discuss animation of the de Rham--Witt theory from \cite{Luc1}.

\begin{definition}
    Let $\mathrm{Alg}_k$ denote the category of $k$-algebras. For each $i \ge 0,$ we define $$\L W_n \Omega^i_{(\cdot)}\colon \mathrm{Alg}_k \to D(W_n(k))$$ to be the animation of the functor $W_n \Omega^i_{(\cdot)}\colon \mathrm{Poly}_k \to D(W_n(k))$ defined in \cite{Luc1}.
\end{definition}{}
For $A \in \mathrm{Alg}_k,$ the object $\LL W_n \Omega^i_A$ is naturally an object of $D(W_n(A))$; further, the usual operators on the de Rham--Witt complex extend to $\L W_n \Omega^i_A.$ In particular, we have $F\colon \L W_n \Omega^i _A\to \L W_{n-1} \Omega^{i}_A$, $V\colon \L W_n \Omega^i_A\to \L W_{n+1} \Omega^i_A$ which extend the Frobenius and Verschiebung maps. We also have a map $R\colon \L W_n \Omega^i_A \to \L W_{n-1} \Omega^{i}_A $ extending the restriction maps which allows one to obtain a pro-object $\L W_{\smallbullet} \Omega^i_{A}$ in the derived $\infty$-category of $W(k)$-modules. The operators $F$ and $V$ may be lifted to maps $F \colon \L W_{\smallbullet} \Omega^i_{A} \to \L W_{{\smallbullet}-1} \Omega^i _{A}$ and $V \colon \L W_{\smallbullet} \Omega^i_{A} \to \L W_{{\smallbullet}+1} \Omega^i _{A}.$ When $A$ is a polynomial algebra, it follows that $FV= VF= p;$ by animation, this gives natural fiber sequences of pro-objects
\begin{equation}\label{eq100}
   \L W_{\smallbullet} \Omega^i_A/F \to \L W_{\smallbullet} \Omega^i_A/p \to \L W_{\smallbullet} \Omega^i_A/V
\end{equation}{}
and
\begin{equation}\label{eq101}
   \L W_{\smallbullet} \Omega^i_A/V \to\L W_{\smallbullet} \Omega^i_A/p \to \L W_{\smallbullet} \Omega^i_A/F.
\end{equation}{}
\begin{definition}\label{derhamw}
    Let $\mathrm{Alg}_k$ denote the category of $k$-algebras. For each $i \ge 0,$ we define $$\L W \Omega^i_{(\cdot)}\colon \mathrm{Alg}_k \to D(W(k))$$ to be the functor determined by sending a $k$-algebra $A$ to $\L W\Omega^i_A\colon= \varprojlim _{n,R} \L W_n \Omega^i_A.$ 
\end{definition}{}
\begin{example}
    Note that $\LL W_n \Omega_A^0 \simeq W_n(A)$ and therefore, $\LL W\Omega_A^0 \simeq W(A),$ the usual ring of $p$-typical Witt vectors.
\end{example}{}

We have natural maps $F,V\colon \L W\Omega^i_A \to \L W\Omega^i_A$ which are obtained by passing to the limit of the map of pro-objects above. We also obtain the following fiber sequences

\begin{equation}\label{eq200}
    \L W \Omega^i_A /F \to \L W \Omega^i_A/p \to \L W \Omega^i_A/V
\end{equation}and
\begin{equation}\label{eq201}
 \L W \Omega^i_A/V \to \L W \Omega^i_A/p \to \L W \Omega^i_A /F.
\end{equation}{}

The following proposition will allow us to calculate the animated de Rham--Witt sheaves by devissage.

\begin{proposition}\label{Illusieprop}
Let $A \in \mathrm{Alg}_k.$ For $r \ge 1,$ we have a fiber sequence
$$ \L W\Omega^{i-1}_A/F^r \to \L W\Omega^i_A/V^r \to \L W_r\Omega^i_A  $$
\end{proposition}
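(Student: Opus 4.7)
The approach is to animate Illusie's classical short exact sequence \cite[I.3]{Luc1}. For a polynomial $k$-algebra $A$, Illusie computes that the kernel of the restriction $W\Omega^i_A \to W_r\Omega^i_A$ is precisely $V^r W\Omega^i_A + dV^r W\Omega^{i-1}_A$, and a standard manipulation (using that $VF = FV = p$ on $W\Omega^\bullet$, hence $V^r F^r = p^r$) identifies the kernel of the induced map $dV^r \colon W\Omega^{i-1}_A \to W\Omega^i_A / V^r$ as exactly $F^r W\Omega^{i-1}_A$. Combining yields the classical short exact sequence
$$ 0 \to W\Omega^{i-1}_A / F^r \xrightarrow{dV^r} W\Omega^i_A / V^r \to W_r\Omega^i_A \to 0, $$
which, viewed in $D(W(k))$, is a cofiber (equivalently fiber) sequence.

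Next I would animate this cofiber sequence. Regarding the three terms as functors $\mathrm{Poly}_k \to D(W(k))$, the sequence forms a cofiber sequence of functors; since left Kan extension is a colimit and colimits commute with colimits (hence cofiber sequences) in a stable $\infty$-category, we obtain a cofiber sequence of their animations on $\mathrm{Alg}_k$. The rightmost animation is $\LL W_r\Omega^i_A$ by definition. For the remaining two terms, I would identify the animation of $W\Omega^i / F^r$ (respectively $W\Omega^i / V^r$) with $\LL W\Omega^i_A / F^r$ (respectively $\LL W\Omega^i_A / V^r$) as constructed in the paper. The point is that the operators $F^r, V^r$ on $\LL W\Omega^i_A = \varprojlim_n \LL W_n\Omega^i_A$ arise by passing to the inverse limit of the operators on the pro-object $\LL W_\smallbullet\Omega^i$, and fibers in $D(W(k))$ commute with inverse limits; dually, cofibers in this stable $\infty$-category also commute with inverse limits (being shifts of fibers). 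Hence the cofiber of $F^r$ acting on the inverse limit equals the inverse limit of the cofibers at each finite truncation level, which recovers the animation of the polynomial-algebra data.

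The main technical obstacle I anticipate is the careful bookkeeping required to reconcile the two viewpoints: Illusie's classical short exact sequence is naturally stated at the level of $W\Omega^\bullet = \varprojlim W_n\Omega^\bullet$, while the animated construction proceeds through the pro-object $\LL W_\smallbullet\Omega^i$. Concretely, one needs to check that Illusie's sequence refines to a fiber sequence of pro-objects (compatibly with the restriction maps $R$), so that after animation and after taking the inverse limit one lands on the cofibers-of-operators described in \cref{derhamw}. A secondary, easier point is verifying that $dV^r$ descends to $W\Omega^{i-1}_A/F^r$ on polynomial algebras, which uses $dV^r F^r y = d(p^r y) = p^r \, dy = V^r F^r \, dy \in V^r W\Omega^i_A$; this descent is preserved under animation.
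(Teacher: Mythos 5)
Your overall strategy---animate Illusie's classical exact sequence and compare with the paper's constructions---is morally the paper's approach, but two steps you treat as routine are in fact where the proof lives.

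First, the plan to animate the classical short exact sequence at the level of $W\Omega^{\bullet}$ does not produce the objects in the statement. By \cref{derhamw}, $\LL W\Omega^i_A$ is \emph{defined} as the inverse limit $\varprojlim_{n,R}\LL W_n\Omega^i_A$; it is not the left Kan extension of $A \mapsto W\Omega^i_A$ from $\mathrm{Poly}_k$, and animation (a colimit) does not commute with this limit. Animating the classical sequence therefore yields a cofiber sequence whose middle term is the Kan extension of $A\mapsto W\Omega^i_A/V^r$, with no a priori relation to $\LL W\Omega^i_A/V^r$. Your observation that cofibers commute with inverse limits is correct but aimed at the wrong target: it shows $\LL W\Omega^i_A/V^r \simeq \varprojlim_n(\LL W_n\Omega^i_A/V^r)$, not that this agrees with the Kan extension of the classical quotient. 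The paper sidesteps this entirely by proving a fiber sequence of \emph{pro-objects} $\LL W_{\smallbullet-r}\Omega^{i-1}_A/F^r \to \LL W_{\smallbullet}\Omega^i_A/V^r \to \LL W_r\Omega^i_A$ (the lemma immediately following the proposition): the content there is a finite-level discrete exact sequence, valid for $n\geq 2r$, which is animated and only then sent to the inverse limit. The ``bookkeeping'' you defer is the actual argument, and the finite-level sequence is not simply the classical one restricted.

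Second, you describe the identification $\ker\bigl(dV^r\colon W\Omega^{i-1}_A \to W\Omega^i_A/V^r\bigr) = F^rW\Omega^{i-1}_A$ as a standard manipulation from $V^rF^r = p^r$, but that only gives the easy inclusion $F^rW\Omega^{i-1}_A\subseteq\ker$. The reverse inclusion is the hard direction: from $dx = p^ry$ one must reduce modulo $p^r$ via the quasi-isomorphism $W\Omega^*_A/p^r\to W_r\Omega^*_A$ (\cite[Cor.~3.17]{Luc1}) and then invoke \cite[Prop.~3.21]{Luc1} to write the resulting closed form in $W_r\Omega^{i-1}_A$ as $F^r\alpha$. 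As the paper notes, the original proof of \cite[Prop.~3.21]{Luc1} had a gap, repaired only in \cite{Luc11}. Your proposal would need to carry out this step explicitly, at the finite level, rather than treating it as routine.
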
{}

\begin{proof}
    When $r=1,$ the proposition follows from animating \cite[Prop.~3.18]{Luc1} and passing to inverse limits. In general, our claim is a consequence of the following:
\end{proof}{}

\begin{lemma}
    Let $A \in \mathrm{Alg}_k$ Then we have a fiber sequence of pro-objects
$$ \LL W_{{\smallbullet}-r}\Omega^{i-1}_A/F^r  \xrightarrow[]{dV^r} \LL W_{{\smallbullet}}\Omega^i_A/V^r \to \LL W_r\Omega^i_A  $$
\end{lemma}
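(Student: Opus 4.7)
The plan is to deduce the lemma by animating its classical analogue for polynomial algebras and then assembling the resulting fiber sequences into the pro-category.

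\textbf{Classical input.} For $A \in \mathrm{Poly}_k$ and $n \ge r \ge 1$, Illusie's de Rham--Witt theory furnishes a short exact sequence of $W(k)$-modules
\begin{equation*}
0 \to W_{n-r}\Omega^{i-1}_A / F^r \xrightarrow{dV^r} W_n \Omega^i_A / V^r \to W_r \Omega^i_A \to 0,
\end{equation*}
natural in both $A$ and $n$. The case $r = 1$ is \cite[I, Prop.~3.18]{Luc1}, and the same method extends to arbitrary $r \ge 1$: using the identities $V^r F^r = p^r$ and $FdV = d$ one checks that $dV^r \circ F^r = p^r d = V^r F^r d$ lies in the image of $V^r$, so that $dV^r$ descends to the displayed cofiber. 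Illusie's devissage then gives exactness at each spot, the nontrivial content being the exactness at the leftmost term.

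\textbf{Animation and pro-structure.} By construction, each of $\LL W_{n-r}\Omega^{i-1}_A / F^r$, $\LL W_n\Omega^i_A / V^r$, and $\LL W_r \Omega^i_A$ is the animation of the corresponding classical functor on $\mathrm{Poly}_k$, and every map in the sequence is induced by a natural transformation of such functors. Since animation is a left Kan extension between stable $\infty$-categories, it preserves fiber sequences; hence the classical short exact sequence above animates to a fiber sequence in $D(W(k))$ for each $n$. The operations $F$, $V$, and $d$ commute with the restriction maps $R$, so these fiber sequences are compatible in $n$ and assemble into maps of pro-objects indexed by $\bullet$. Since $\mathrm{Pro}(D(W(k)))$ is stable (see the conventions) and a levelwise fiber sequence of pro-objects is a fiber sequence in the pro-category, we obtain the asserted fiber sequence.

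The main potential obstacle is really only the classical input for general $r$, and in particular the exactness at the leftmost term, which requires Illusie's devissage argument. Given that, everything else is formal: animation of fiber sequences is automatic because left Kan extension is exact, and the pro-object statement follows from the compatibility of all the operators with the restriction maps.
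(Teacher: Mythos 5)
There is a genuine gap, and it is precisely at the two places where the paper has to work hardest.

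First, the ``classical input'' you assert as Illusie's does not hold at fixed level $n$. The paper does \emph{not} establish exactness of
$0 \to W_{n-r}\Omega^{i-1}_A / F^r W_n\Omega^{i-1}_A \xrightarrow{dV^r} W_n\Omega^i_A / V^r W_{n-r}\Omega^i_A \to W_r\Omega^i_A \to 0$;
instead it proves exactness of the sequence with the \emph{smaller} group $V^r W_{n-r}\Omega^{i-1}_A / p^r W_n\Omega^{i-1}_A$ in the left slot. The injectivity argument shows that if $dV^r x = V^r y$ then $V^r x \in p^r W_n\Omega^{i-1}_A$, i.e.\ $x - F^r \overline{z}$ is $V^r$-torsion, not that $x \in F^r W_n\Omega^{i-1}_A$. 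The discrepancy between the two left-hand terms is the image of the $V^r$-torsion of $W_{n-r}\Omega^{i-1}_A$, and this only dies because $\underline R^r$ annihilates $V^r$-torsion (as it annihilates $p^r$-torsion, \cite[I, Prop.~3.4]{Luc1}), hence it is pro-zero. Moreover, establishing even that weaker injectivity is the real content: the paper invokes \cite[I, Prop.~3.21]{Luc1} (with the correction in \cite[II,~1.3]{Luc11}) and the quasi-isomorphism $W\Omega^*_A/p^r \to W_r\Omega^*_A$ from \cite[I, Cor.~3.17]{Luc1}. Your ``Illusie's devissage then gives exactness at each spot'' simply asserts this away.

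Second, you conflate the derived cofiber $\LL W_{n-r}\Omega^{i-1}_A/F^r$ appearing in the lemma with the animation of the discrete cokernel. For $A$ polynomial, the former is the two-term complex $[W_n\Omega^{i-1}_A \xrightarrow{F^r} W_{n-r}\Omega^{i-1}_A]$, which has a nonzero $\pi_1$ given by the $F^r$-torsion; the latter is discrete. Animating your classical short exact sequence would produce a fiber sequence with the animated discrete cokernel in the left slot, which is not the object the lemma is about. (The same issue arises for $\LL W_n\Omega^i_A/V^r$ and the $V^r$-torsion.) The paper bridges this gap precisely by the observation that $\underline R^r = 0$ on $W_{n+1}\Omega^i_A[F^r]$ and $W_{n+1}\Omega^i_A[V^r]$, so that in the pro-category the derived cofibers and the discrete cokernels agree. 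Your proposal never raises this point, and the parenthetical remark that ``animation is a left Kan extension between stable $\infty$-categories'' is not accurate (the source category is not stable); what is true, and what you actually need, is that left Kan extension commutes with cofibers, but that only identifies the animation of the discrete cokernel, not the derived one.

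So the overall architecture (classical input, animate, pass to pro-objects) matches the paper, but the two steps you label as ``given'' and ``formal'' are exactly where the lemma's substance lives, and your stated classical input is not quite correct as written.
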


\begin{proof}
Suppose that $A$ is a polynomial algebra. Let $\underline{R}: W_{n+1} \Omega^i_A [p^r] \to W_{n} \Omega^i_A [p^r]$ be the induced restriction maps. By \cite[Prop.~3.4]{Luc1}, $\underline{R}^r = 0;$ in particular, the same holds for $W_{n+1}\Omega^i_A[F^r]$ and $W_{n+1}\Omega^i_A[V^r]$. Therefore, the desired result follows from animation and passing to pro-objects, once we show that for a polynomial algebra $A,$ the following sequence is exact for $n \ge 2r,$ where the quotients are taken in the discrete sense:
 $$0 \to V^r W_{n-r} \Omega^{i-1}_A /p^r W_n \Omega^{i-1}_A \xrightarrow[]{d} W_n \Omega^i_A/V^r W_{n-r}\Omega^i_A \to W_r\Omega^i_A \to 0.$$
 Using \cite[{Prop.~3.2}]{Luc1}, one obtains the exactness in the middle and right. For the injectivity of $d,$ let us suppose that there is an $x \in W_{n-r}\Omega^{i-1}_A$ such that $dV^r x = V^r y$ for some $y \in W_{n-r}\Omega^i_A.$ Applying $F^r$ on both sides (also using $FdV= d$ and $FV= p$), we obtain $dx = p^r y.$ Let $\overline{x} \in W_r \Omega^{i-1}_A$ be the restriction of $x.$ Therefore, we have $d \overline{x} = 0.$ It follows from \cite[Prop.~3.21]{Luc1}\footnote{Illusie pointed out that there was a gap in the proof of \cite[Prop.~3.21]{Luc1} which was fixed in \cite[II,~1.3]{Luc11}.} that there exists an $\alpha \in W_{2r}\Omega^{i-1}_A$ such that $\overline{x} = F^r \alpha.$
 Now let $\widetilde{\alpha}, \widetilde{x} \in W\Omega^{i-1}_A$ be elements that restrict to $\alpha, x$ respectively.
Then it follows that $\widetilde{x} - F^r \widetilde{\alpha}$ restricts to zero in $W_r \Omega^{i-1}_A.$ Since the restriction maps induce a quasi-isomorphism of complexes $W\Omega^*_A/p^r \to W_r\Omega^*_A$ (see \cite[Cor.~3.17]{Luc1}), it follows that $\widetilde{x} - F^r\widetilde{\alpha} \in p^r W\Omega^{i-1}_A + d W\Omega^{i-2}.$ This implies that $V^r \widetilde{x} = p^r z$ for some $z \in W\Omega^{i-1}_A.$ By restriction, we obtain $V^rx = p^r \overline{z}$, where $\overline{z} \in W_n\Omega^{i-1}_A.$ This finishes the proof.
\end{proof}{}

To further analyze the animated de Rham--Witt sheaves, we use the following notion from \cite{Ben}.

\begin{definition} Let $M \in D(\Z)$ and $V: M \to M$ be an endomorphism. We will say that $M$ is derived $V$-complete if $\varprojlim_V M =0.$
\end{definition}{}

Let $M/V^n:= \mathrm{cofib} (M \xrightarrow[]{V^n}M).$ Then we have a fiber sequence $\varprojlim_{V}M \to M \to  \varprojlim M/V^n,$ which implies that $M$ is derived $V$-complete if and only if the natural map $M \to \varprojlim M/V^n$ is an isomorphism. The $\infty$-category of derived $V$-complete objects is closed under limits.

\begin{lemma}\label{lemma1}
    Let $M \in D(\Z)$ and $V: M \to M$ be such that $M$ is derived $V$-complete. If $M/V$ is discrete then so is $M.$
\end{lemma}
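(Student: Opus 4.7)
The plan is to extract the vanishing of $\pi_k M$ for $k\neq 0$ from two ingredients: the long exact sequence in homotopy attached to the fiber sequence $M\xrightarrow{V} M\to M/V$, and the Milnor short exact sequence attached to the tower $\cdots\xrightarrow{V} M\xrightarrow{V} M$, whose derived limit is $\varprojlim_V M=0$ by hypothesis.

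First, I would use discreteness of $M/V$ to read off that $\pi_k(M/V)=0$ for all $k\neq 0$. Inserting this into the long exact sequence
\[
\pi_{k+1}(M/V)\to \pi_k M\xrightarrow{V}\pi_k M\to \pi_k(M/V)
\]
immediately shows that $V$ is an isomorphism on $\pi_k M$ whenever both $\pi_{k+1}(M/V)$ and $\pi_k(M/V)$ vanish, i.e., for every $k\neq 0,-1$. At the boundary index $k=-1$, only $\pi_{-1}(M/V)$ is guaranteed to vanish, so I only conclude that $V$ is surjective on $\pi_{-1}M$.

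Second, I would invoke derived $V$-completeness via the Milnor sequence
\[
0\to \mathrm{lim}^1_V \pi_{k+1}M\to \pi_k(\varprojlim_V M)\to \varprojlim_V \pi_k M\to 0.
\]
The hypothesis $\varprojlim_V M=0$ forces $\varprojlim_V \pi_k M=0$ for every $k$. Whenever $V$ acts on $\pi_k M$ as an isomorphism, $\varprojlim_V \pi_k M$ is tautologically $\pi_k M$, which therefore vanishes; this handles all $k\neq 0,-1$. For the remaining case $k=-1$, the surjectivity of $V$ on $\pi_{-1}M$ ensures that the evaluation $\varprojlim_V \pi_{-1}M\to \pi_{-1}M$ at the zeroth coordinate is itself surjective (one lifts any element through $V$ iteratively to produce a coherent tower), so $\pi_{-1}M=0$ as well. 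Hence $\pi_k M=0$ for all $k\neq 0$, and $M$ is discrete.

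The one place where I expect to exercise some care is the asymmetry at the boundary index $k=-1$: the long exact sequence only delivers surjectivity rather than bijectivity of $V$, and one must separately note that a surjective endomorphism with vanishing derived inverse limit must act on the zero group. Aside from that, everything is a formal diagram chase combined with the Milnor sequence.
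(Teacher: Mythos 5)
Your proof is correct, and it takes a genuinely different (though related) route from the paper. You analyze the homotopy groups of $M$ directly: from the cofiber sequence $M\xrightarrow{V}M\to M/V$ you extract that $V$ is an isomorphism on $\pi_k M$ for $k\neq 0,-1$ and surjective on $\pi_{-1}M$, and you then kill these groups by combining $\varprojlim_V\pi_kM=0$ (from the Milnor sequence for the $V$-tower and the hypothesis $\varprojlim_VM=0$) with the elementary observation that a surjective endomorphism whose inverse limit vanishes must act on the zero group. The paper instead runs an induction on $n$ to show that each $M/V^n$ is discrete with surjective transition maps $\pi_0(M/V^n)\twoheadrightarrow\pi_0(M/V^{n-1})$, then applies the Milnor sequence to the tower $\{M/V^n\}$ via the identification $M\simeq\varprojlim M/V^n$ (which is the other leg of the fiber sequence defining derived $V$-completeness). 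Both arguments are formal $\lim$/$\lim^1$ bookkeeping; yours stays on the $V$-tower of $M$ itself and has to treat the boundary index $k=-1$ by hand, while the paper's version reorganizes the same information so that the Mittag--Leffler vanishing of $\lim^1$ is visible as surjectivity on $\pi_0$ of the quotients $M/V^n$. Your handling of the $k=-1$ case — surjectivity of $V$ plus $\varprojlim_V\pi_{-1}M=0$ forcing $\pi_{-1}M=0$ — is exactly the delicate point, and your reasoning there is sound.
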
{}

\begin{proof}
We will prove that $M/V^n$ is discrete by induction on $n$.
The diagram $M \xrightarrow{V} M \xrightarrow{V^{n-1}}M $ yields a fiber sequence $$ M/V \to M/V^n \to M/V^{n-1}.$$ It follows inductively from the hypothesis that $M/V^n$ is discrete for all $n \ge 1$ and the natural maps $M/V^n \to M/V^{n-1}$ are surjective. Since $M \simeq \varprojlim M/V^n$ by derived $V$-completeness of $M$, the conclusion follows.
\end{proof}{}

\begin{lemma}\label{derV}
    Let $A \in \mathrm{Alg}_k.$ Then $\L W \Omega^i_A$ is derived $V$-complete for each $i \ge 0.$
\end{lemma}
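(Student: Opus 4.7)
The plan is to reduce derived $V$-completeness of $\LL W\Omega^i_A$ to nilpotence of Verschiebung at each finite level of the defining tower. By construction $\LL W\Omega^i_A \simeq \varprojlim_{n,R} \LL W_n\Omega^i_A$, and since the class of derived $V$-complete objects is closed under limits in $D(\Z)$, it suffices to show that each $\LL W_n\Omega^i_A$ is derived $V$-complete with respect to the self-map induced from the pro-structure, which is explicitly the composite
\[
\LL W_n\Omega^i_A \xrightarrow{V} \LL W_{n+1}\Omega^i_A \xrightarrow{R} \LL W_n\Omega^i_A,
\]
still denoted by $V$.

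The main claim is that the $n$-fold iterate of this self-map is nullhomotopic. For a polynomial algebra $B$ over $k$, the commutation $RV = VR$ allows one to rewrite the $n$-fold iterate as $R^n \circ V^n \colon W_n\Omega^i_B \to W_{2n}\Omega^i_B \to W_n\Omega^i_B$. The image of $V^n$ lies in $V^n W_{2n}\Omega^i_B$, which is contained in the kernel of the restriction $R^n\colon W_{2n}\Omega^i_B \to W_n\Omega^i_B$ by classical de Rham--Witt theory. Hence this composite vanishes strictly as a natural transformation on $\mathrm{Poly}_k$. Animating this identity yields a canonical nullhomotopy of the $n$-fold self-composition of $V$ on $\LL W_n\Omega^i_A$ for every $A \in \mathrm{Alg}_k$.

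Finally, whenever a self-map $V$ of $M \in D(\Z)$ satisfies $V^n \simeq 0$, the inverse system $\cdots \xrightarrow{V} M \xrightarrow{V} M$ is pro-zero, since every $n$-fold composition is null; equivalently, by cofinality one has $\varprojlim_V M \simeq \varprojlim_{V^n} M \simeq 0$, so $M$ is derived $V$-complete. Applying this to each $\LL W_n\Omega^i_A$ and then combining with the reduction from the first paragraph concludes the proof.

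The main delicate step I expect is verifying, at the level of animated pro-objects, that the self-map of $\LL W_n\Omega^i_A$ coming from the pro-structure truly identifies with the composite $R \circ V$ written above, and that the classical commutation $RV = VR$ is inherited by animation so that the rewriting into $R^n \circ V^n$ is valid. Once this compatibility is in place, the rest of the argument is formal.
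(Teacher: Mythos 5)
Your proposal is correct and follows essentially the same route as the paper: define the self-map $V' = R\circ V$ on $\LL W_n\Omega^i_A$, show $V'^n = 0$ on polynomial algebras (where the paper simply asserts this, you spell it out via $RV=VR$ and $R^nV^n = V^nR^n$ with $R^n : W_n\Omega^i_B\to W_0\Omega^i_B = 0$), animate, and then pass to the limit. One minor slip: the image of $V^n\colon W_n\Omega^i_B\to W_{2n}\Omega^i_B$ lies in $V^n W_n\Omega^i_B$, not $V^n W_{2n}\Omega^i_B$, but this does not affect the argument.
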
{}

\begin{proof}
    Let us define $V'\colon \L W_{n} \Omega^i_A \to \L W_{n} \Omega^i_A$ as the composite of $V\colon \L W_n\Omega^i_A \to \L W_{n+1} \Omega^i_A$ with $R\colon \L W_{n+1} \Omega^i_A \to \L W_{n} \Omega^i_A.$ When $A$ is a polynomial algebra, it follows that $V'^n=0.$ Therefore, $\L W_n \Omega^i_A$ is derived $V'$-complete after animation. Passing to inverse limit over $n$, it follows that $\L W\Omega^i_A$ is derived $V$-complete.
\end{proof}{}

\begin{proposition}\label{flatdescent} The functor $\mathrm{Alg}_k^{\mathrm{op}} \to D(\Z)$ determined by $A \mapsto \L W \Omega^i_A$ is an fpqc sheaf.
\end{proposition}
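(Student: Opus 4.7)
The plan is to reduce the claim to the fpqc sheafiness of each truncated functor $A \mapsto \L W_n \Omega^i_A$, and then argue this by induction on $n$, bootstrapping from \cite[Thm.~3.1]{BMS2} and the earlier proposition on fpqc descent for the animated Cartier objects $B_n \L \Omega^i$ and $Z_n \L \Omega^i$. The reduction is immediate: fpqc sheaves valued in $D(\Z)$ form a limit-closed subcategory, and $\L W \Omega^i_A \simeq \varprojlim_n \L W_n \Omega^i_A$ by \cref{derhamw}, so it suffices to show that $A \mapsto \L W_n \Omega^i_A$ is an fpqc sheaf for each $n, i \ge 0$.

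The base case $n = 1$ coincides with the fpqc descent of $A \mapsto \L \Omega^i_A$, which is \cite[Thm.~3.1]{BMS2}. For the inductive step, I would invoke Illusie's classical analysis in \cite[I, \S 3]{Luc1}: for a polynomial $k$-algebra $A$, the restriction map $R\colon W_n\Omega^i_A \to W_{n-1}\Omega^i_A$ is surjective with kernel $V^{n-1}\Omega^i_A + dV^{n-1}\Omega^{i-1}_A$, and this kernel carries a natural two-step filtration whose associated graded pieces are identified, via $V^{n-1}$ and $dV^{n-1}$, with the quotients $\Omega^i_A / B_{n-1}\Omega^i_A$ and $\Omega^{i-1}_A / Z_{n-1}\Omega^{i-1}_A$. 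These are short exact sequences of abelian-group-valued functors on $\mathrm{Poly}_k$, hence cofiber sequences in $\mathrm{Fun}(\mathrm{Poly}_k, D(\Z))$; since left Kan extension (being a left adjoint) preserves cofiber sequences, animating them yields fiber sequences in $D(\Z)$. Consequently, $\L W_n \Omega^i_A$ is assembled by finitely many fiber sequences from $\L W_{n-1}\Omega^i_A$, $\L\Omega^i_A$, $\L\Omega^{i-1}_A$, $B_{n-1}\L\Omega^i_A$, and $Z_{n-1}\L\Omega^{i-1}_A$. The last four are fpqc sheaves by \cite[Thm.~3.1]{BMS2} and the preceding proposition on animated Cartier sheaves, and $\L W_{n-1}\Omega^i_A$ is a sheaf by the inductive hypothesis; since fpqc sheaves are closed under finite limits, $\L W_n \Omega^i_A$ is an fpqc sheaf.

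The main obstacle I anticipate is the bookkeeping required to animate Illusie's filtration on $\ker(R)$: one needs to verify that the classical identifications of its graded pieces really come from short exact sequences of functors on $\mathrm{Poly}_k$, so that left Kan extension delivers bona fide fiber sequences in $D(\Z)$. Once that is in hand, the structure of the argument is parallel to the fpqc descent arguments for $\L\Omega^*$ in \cite{BMS2} and for $B_n, Z_n$ earlier in this section.
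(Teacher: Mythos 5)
Your argument is correct, but takes a genuinely different route from the paper's. The paper inducts on the de Rham degree $i$: after disposing of $i=0$, it uses derived $V$-completeness (\cref{derV}) to reduce to showing $A \mapsto \L W\Omega^i_A/V$ is a sheaf, and then applies the fiber sequence of \cref{Illusieprop} in the case $r=1$,
\[
\L W\Omega^{i-1}_A/F \;\longrightarrow\; \L W\Omega^i_A/V \;\longrightarrow\; \L W_1\Omega^i_A \simeq \L\Omega^i_A,
\]
together with the inductive hypothesis and \cite[Thm.~3.1]{BMS2}. You instead fix $i$, reduce via $\L W\Omega^i_A \simeq \varprojlim_n \L W_n\Omega^i_A$ to the finite levels $\L W_n\Omega^i_A$, and induct on the Witt level $n$ by animating Illusie's classical two-step $(B,Z)$-filtration on $\ker(R\colon W_n\Omega^i \to W_{n-1}\Omega^i)$. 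This is a sound alternative: the relevant short exact sequences in \cite[I,~3.8--3.9]{Luc1} involve only the natural maps $V^{n-1}$, $dV^{n-1}$ and the canonical inclusions of $B_{n-1}\Omega^i$, $Z_{n-1}\Omega^{i-1}$ into $\Omega^i$, $\Omega^{i-1}$, so they are manifestly natural in $\mathrm{Poly}_k$ and animate to fiber sequences; your appeal to the earlier (unlabeled) proposition on sheafiness of $B_n\L\Omega^i$ and $Z_n\L\Omega^i$ then closes the loop. The paper's route has the advantage of reusing \cref{Illusieprop}, which it establishes anyway and needs again, and of sidestepping the extra animation bookkeeping you flag; your route, by contrast, proves sheafiness of each $\L W_n\Omega^i_A$ as a step along the way, which the paper instead records afterward as a remark, deducing it from \cref{Illusieprop} and \cref{flatdescent}.
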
{}

\begin{proof}
We will use induction on $i;$ the case $i=0$ is clear. By \cref{derV}, we reduce to checking that $A \mapsto \L W\Omega^i_A/V$ is an fpqc sheaf. The latter follows inductively from \cite[Theorem 3.1]{BMS2} and \cref{Illusieprop} (in the case $r=1$).
\end{proof}{}

\begin{remark}
    The functor $\mathrm{Alg}_k^{\mathrm{op}} \to D(\Z)$ determined by $A \mapsto \L W_r \Omega^i_A$ is also an fpqc sheaf; this follows from \cref{Illusieprop} and \cref{flatdescent}.
\end{remark}{}

The importance of Cartier operators for our purposes is reflected in the following proposition.

\begin{proposition}\label{llll}
    Let $A \in \mathrm{Alg}_k.$ Then $ \LL W \Omega^i_A/ V \simeq \varprojlim_{C} Z_n \LL \Omega^i_A$ and $\LL W\Omega^i_A/ F \simeq \varprojlim_{C} B_{n} \LL \Omega^{i+1}_A.$
\end{proposition}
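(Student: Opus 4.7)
The plan is to construct natural maps realizing both isomorphisms at the level of polynomial algebras over $k$ using Illusie's classical de Rham--Witt theory, and then pass to general $\F_p$-algebras by animation together with fpqc descent.

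For a polynomial algebra $A$, I would build the map $W\Omega^i_A/V \to \varprojlim_C Z_n\Omega^i_A$ as follows. Iterating Frobenius gives $F^n\colon W\Omega^i_A \to W\Omega^i_A$; composing with the reduction $W\Omega^i_A \to W_1\Omega^i_A = \Omega^i_A$ produces a map $\bar F^n\colon W\Omega^i_A \to \Omega^i_A$ whose image, by classical structure theory, lies in $Z_n\Omega^i_A$. Because $A$ is an $\F_p$-algebra and $FV = p$, we have $\bar F^n V = p\,\bar F^{n-1} = 0$ in $\Omega^i_A$, so $\bar F^n$ descends mod $V$. The Cartier operators $C$ intertwine the levels (standard compatibility between Frobenius on $W\Omega^*_A$ and the Cartier isomorphism), so the $\bar F^n$'s assemble into a map $W\Omega^i_A/V \to \varprojlim_C Z_n\Omega^i_A$, which is an isomorphism for polynomial $A$ by the classical results of \cite{Luc1}. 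An analogous map $W\Omega^i_A/F \to \varprojlim_C B_n\Omega^{i+1}_A$ is built from $d\circ \bar F^n$, using $dF = \phi\circ d$ to see that it vanishes mod $F$ and lands in the correct boundary subgroup.

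To animate, observe that both sides of both equations are functors $\mathrm{Alg}_k^{\mathrm{op}}\to D(W(k))$ satisfying fpqc descent: the LHS by \cref{flatdescent}, and the RHS because each $Z_n\LL\Omega^i_A$ and $B_n\LL\Omega^{i+1}_A$ is an fpqc sheaf (proved above) and limits of sheaves are sheaves. It therefore suffices to prove the isomorphism after passage to a quasiregular semiperfect cover. For such $A$, \cref{prop45} shows that $Z_n\LL\Omega^i_A[-i]$ and $B_n\LL\Omega^{i+1}_A[-i-1]$ are discrete, so each $\varprojlim_C$ is computed in the heart. On the LHS, the fiber sequence \cref{Illusieprop} with $r=1$, combined with the explicit description of $\LL\Omega^*_A$ as the divided power envelope $D_{A^\flat}(I)$ and the discreteness statements of \cref{coollemma}, lets one show by induction on $i$ that $\LL W\Omega^i_A/V$ is likewise concentrated in degree $i$. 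The comparison then reduces to a concrete identification of classical objects with their animated counterparts in this fixed cohomological degree.

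The main obstacle is the interplay between the Cartier inverse limit $\varprojlim_C$ and animation, since neither operation commutes with the other a priori. The fpqc-descent reduction to quasiregular semiperfect inputs circumvents this by placing both sides in a range where everything is discrete in a single cohomological degree, so the limits are transparent. A secondary bookkeeping obstacle is matching the Cartier tower on the de Rham--Witt side with the towers on the $Z_n$ and $B_n$ sides; this reduces to the classical compatibilities but requires care. For the $/F$ statement, an alternative route is to deduce it from the $/V$ statement by chasing the fiber sequences \cref{eq200}, \cref{eq201}, \cref{buyfan}, and \cref{cart1}--\cref{cart2}, exploiting the symmetry between the two quotient constructions.
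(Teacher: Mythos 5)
Your construction of the comparison map is essentially correct in spirit, and the idea of deducing the $/F$ statement from the $/V$ statement by chasing the fiber sequences is sound. However, the way you propose to verify the isomorphism diverges from the paper's and contains gaps.

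First, a point of care about the construction itself. The object $\LL W\Omega^i_A$ is \emph{defined} as $\varprojlim_n \LL W_n\Omega^i_A$ (\cref{derhamw}), not as the animation of $W\Omega^i_{(\cdot)}$. These are not the same, because animation (a sifted colimit) does not commute with inverse limits. So you cannot simply build $\bar F^n \colon W\Omega^i_A/V \to Z_n\Omega^i_A$ on polynomial algebras and then ``animate''; you must first exhibit the map at each finite level, as $F^n\colon \mathrm{cofib}(W_n\Omega^i_A \xrightarrow{V} W_{n+1}\Omega^i_A) \to Z_n\Omega^i_A$, check compatibility with the restriction maps on the source and the Cartier maps $C$ on the target, animate each of these individually, and only then pass to the inverse limit. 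This is what the paper does, citing \cite[Prop.~3.11]{Luc1}.

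Second, and more substantively, your proposed verification by fpqc descent to a quasiregular semiperfect cover does not establish the statement as stated. Quasiregular semiperfect covers are available for quasisyntomic $k$-algebras, but \cref{llll} is stated for all $A \in \mathrm{Alg}_k$, and a general $k$-algebra need not admit a faithfully flat cover by a quasiregular semiperfect ring. Moreover, even for quasisyntomic $A$, after you have reduced to a quasiregular semiperfect $A$ and observed that everything in sight is discrete, you still owe an argument that the two discrete modules actually agree under the comparison map. You label this step ``transparent'' or a matter of ``bookkeeping,'' but it is precisely where the content lies. The paper's route sidesteps both difficulties in one stroke: by \cite[Prop.~3.11]{Luc1}, the fiber of the finite-level map on polynomial algebras is an $\mathbb N$-indexed pro-object all of whose transition maps are literally zero, so it remains pro-zero after animation (animation preserves zero maps term by term), and hence its inverse limit vanishes for every $A \in \mathrm{Alg}_k$. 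This gives the isomorphism directly, with no descent and no restriction to quasisyntomic rings. I would encourage you to adopt this pro-zero argument; it is shorter and strictly more general than the descent route you propose.
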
{}

\begin{proof}
Let $A$ be a polynomial algebra over $k.$ Then by \cite[Prop. 3.11]{Luc1}, there is a natural map $\mathrm{cofib}(W_{n} \Omega^i_A \xrightarrow{V} W_{n+1} \Omega^i_A  ) \xrightarrow{F^n} Z_n  \Omega^i_A$ of $\mathbb{N}$-indexed (via the restriction maps $W_{n+1}\Omega^i_A \to W_{n}\Omega^i_A$ on the left-hand side and via $C$ on the right-hand side) objects whose fiber is an $\mathbb{N}$-indexed object whose transition maps are all zero. By animation and taking inverse limit over $n \in \mathbb{N},$ we obtain $$ \LL W \Omega^i_A/ V \simeq \varprojlim_{C} Z_n \LL \Omega^i_A.$$ The other assertion is deduced similarly from loc. cit.
\end{proof}{}

\begin{proposition}\label{perfectwitt}
Let $S$ be a perfect ring. Then $\LL W \Omega^i_S = 0$ for $i>0.$    
\end{proposition}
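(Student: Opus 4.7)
The plan is to leverage the derived $V$-completeness of $\LL W\Omega^i_S$ to reduce the vanishing statement to showing $\LL W\Omega^i_S/V = 0$, and then to use the Cartier-operator presentation $\LL W\Omega^i_A/V \simeq \varprojlim_C Z_n\LL\Omega^i_A$ together with the known vanishing of $Z_n\LL\Omega^i_A$ for perfect $A$.

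First, I would invoke \cref{derV} to conclude that $\LL W\Omega^i_S$ is derived $V$-complete, so that by the fiber sequence $\varprojlim_V \LL W\Omega^i_S \to \LL W\Omega^i_S \to \varprojlim_n \LL W\Omega^i_S/V^n$ recorded just before \cref{lemma1}, it suffices to prove $\LL W\Omega^i_S/V^n = 0$ for every $n \ge 1$. Iterating the fiber sequence $\LL W\Omega^i_S/V \to \LL W\Omega^i_S/V^n \to \LL W\Omega^i_S/V^{n-1}$ (established in the proof of \cref{lemma1}), an induction on $n$ further reduces us to the case $n=1$, namely showing $\LL W\Omega^i_S/V = 0$.

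For this, I would apply \cref{llll} to identify $\LL W\Omega^i_S/V \simeq \varprojlim_C Z_n\LL\Omega^i_S$. Since $S$ is perfect by hypothesis and $i > 0$, \cref{perfectringcart} gives $Z_n\LL\Omega^i_S \simeq 0$ for every $n \ge 0$, so the inverse limit along the Cartier maps vanishes, yielding $\LL W\Omega^i_S/V = 0$ and hence the desired conclusion.

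The argument is essentially a bookkeeping assembly of results already in place, so I do not anticipate a substantive obstacle. The only subtlety to keep in mind is that each ingredient must apply to an arbitrary $A \in \mathrm{Alg}_k$ rather than merely to polynomial algebras; this is precisely what the animation framework developed earlier in the section guarantees, both for the Cartier-operator identification of \cref{llll} and for the derived $V$-completeness established in \cref{derV}.
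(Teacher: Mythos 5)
Your proof is correct and follows essentially the same route as the paper, which simply cites \cref{perfectringcart} and \cref{llll} without further elaboration. Your explicit invocation of derived $V$-completeness via \cref{derV} fills in the step (passing from $\LL W\Omega^i_S/V = 0$ to $\LL W\Omega^i_S = 0$) that the paper leaves implicit.
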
{}
\begin{proof}
Follows from \cref{perfectringcart} and \cref{llll}.     
\end{proof}{}

    

\begin{proposition}\label{fixthis}
    Let $S$ be a quasiregular semiperfect algebra. Then $\L W\Omega^i_S[-i]$ is discrete for each $i \ge 0$.
\end{proposition}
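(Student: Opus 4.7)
The plan is to combine \cref{lemma1}, \cref{derV}, and \cref{llll} to reduce the claim to a statement about the tower $\{Z_n \L\Omega^i_S\}$, which has already been analyzed in \cref{prop45}. First, since $\L W\Omega^i_S$ is derived $V$-complete by \cref{derV}, the shifted object $M \coloneqq \L W\Omega^i_S[-i]$ is also derived $V$-complete (shifting commutes with limits). By \cref{lemma1}, to prove that $M$ is discrete it suffices to show that $M/V \simeq (\L W\Omega^i_S/V)[-i]$ is discrete.

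Next, I would invoke \cref{llll} to identify $\L W\Omega^i_S/V$ with $\varprojlim_{C} Z_n \L\Omega^i_S$, so that the task becomes showing that the homotopy limit $\varprojlim_{C} (Z_n \L\Omega^i_S[-i])$ is discrete. Each individual term $Z_n \L\Omega^i_S[-i]$ is discrete by \cref{prop45}, so the question is one of whether the derived limit introduces any $\lim^1$-contribution. Concretely, I would unwind the definition of the homotopy limit as the fiber of $1-\mathrm{shift}\colon \prod_n (Z_n\L\Omega^i_S[-i]) \to \prod_n (Z_n\L\Omega^i_S[-i])$, so that the limit sits in degrees $[-1,0]$ with $\pi_{-1}$ given by the $\lim^1$ of the tower of abelian groups $\pi_0(Z_n\L\Omega^i_S[-i])$; the goal is to show this $\lim^1$ vanishes.

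The vanishing follows from the Cartier fiber sequence~\cref{cart1}, which after shifting reads
\[
B_1 \L\Omega^i_S[-i] \to Z_{n+1} \L\Omega^i_S[-i] \xrightarrow{C} Z_n \L\Omega^i_S[-i].
\]
All three terms are discrete by \cref{prop45}, so the associated long exact sequence on homotopy groups degenerates and shows that the transition map $C$ is surjective on $\pi_0$. Hence the tower $\{\pi_0(Z_n\L\Omega^i_S[-i])\}_n$ has surjective transition maps, is Mittag-Leffler, and its $\lim^1$ vanishes. Consequently the homotopy limit $\varprojlim_C Z_n \L\Omega^i_S[-i]$ coincides with the ordinary inverse limit, which is discrete. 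Feeding this back into \cref{lemma1} completes the proof.

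The main potential obstacle is the bookkeeping around derived inverse limits: one must verify that $B_1 \L\Omega^i_S[-i]$ really sits in degree $0$ (so that the long exact sequence gives surjectivity rather than only some weaker statement), and that derived $V$-completeness is preserved under the shift. Both of these are settled directly by the results already established in the excerpt (\cref{prop45} and \cref{derV}), so the argument is essentially formal once those inputs are in place.
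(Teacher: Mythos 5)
Your proof is correct and takes essentially the same route as the paper: reduce via \cref{lemma1} and \cref{derV} to discreteness of $(\L W\Omega^i_S/V)[-i]$, identify this with $\varprojlim_C Z_n \L\Omega^i_S[-i]$ via \cref{llll}, and then use \cref{prop45} together with the fiber sequence \cref{cart1} (discreteness of $B_1\L\Omega^i_S[-i]$) to get surjectivity of the transition maps and hence Mittag--Leffler vanishing of $\lim^1$. You spell out the $\lim^1$ bookkeeping more explicitly than the paper does, but the argument is the same.
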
{}

\begin{proof}
Using \cref{lemma1}, it suffices to prove that $(\L W \Omega^i_S/V) [-i]$ is discrete. By \cref{llll}, $\L W \Omega^i_S/V \simeq \varprojlim  Z_n \LL \Omega^i_A.$ By \cref{prop45}, $ Z_n \LL \Omega^i_A[-i]$ is discrete. Since (by \cref{prop45}), $B_1 \LL \Omega^i_A[-i]$ is also discrete, using the fiber sequence \cref{cart1}, we see that the maps $ Z_{n+1} \LL \Omega^i_A[-i] \xrightarrow{C}  Z_n \LL \Omega^i_A[-i]$ are surjections. This finishes the proof. 
\end{proof}{}

Finally, we begin our discussion on $\TR.$

\begin{construction}\label{cons1}
    Note that for a $k$-algebra $A,$ the functor $A \mapsto \TR ^r (A,p)$ is the animation of its restriction to the full subcategory of finitely generated polynomial algebras. Further, when $A$ is smooth, by \cite[Thm.~B]{Hess}, we have
$$\pi_n \TR^r(A,p) \simeq \bigoplus_{\substack{0 \le i \le n\\ n-i\,\w{is even}}} W_r\Omega^i_A.$$ Therefore, by animating the decreasing Postnikov filtration on $\TR^r(A,p)$ (given by $\tau_{\ge *}\TR^r(A,p)$) from the category of polynomial algebras, one can equip $\TR^r(A,p)$ with the structure of a filtered object $\mathrm{Fil}^*_{\mathrm{}} \TR^r(A,p)$ such that 
\begin{equation}\label{eq1}
\mathrm{gr}^n_{\mathrm{M}} \TR^r(A,p) \cong
    \displaystyle{\bigoplus_{\substack{0 \le i \le n\\ n-i\,\w{is even}}}\L W_r\Omega^i_A[n]}
\end{equation}
Note that there are natural restriction maps $R\colon \TR^{r+1}(A,p) \to \TR^{r}(A,p)$, and one sets $$\TR (A,p) \colon= \varprojlim_{r,R} \TR^r(A,p).$$ By passing to the inverse limit over the restriction maps one can equip $\TR(A,p)$ with the structure of a filtered object $\mathrm{Fil}^*_{\mathrm{M}} \TR(A,p)$. Note that by \cite[Thm.~B]{Hess}, the map $R\colon \TR^{r+1}(A,p) \to \TR^{r}(A,p)$ induces a map $R_{n,r,i}\colon \L W_{r+1} \Omega^i_A[n] \to \L W_{r} \Omega^i_A [n]$ at the level of the $i$-th summand of $\mathrm{gr}^n$ that is equivalent to $(p \lambda_{r+1})^{\frac{n-i}{2}}R[n]$, where $\lambda_{r+1} \in (\Z/p^{r+1} \Z)^\times.$ It follows from this description that $\varprojlim _{r,R_{n,r,i}} \L W_{r}\Omega^i_A [n] \simeq 0$ if $n>i.$ Therefore, we see that $\mathrm{gr}^n \TR(A,p) \simeq \L W\Omega^n_A [n].$ Let us summarize this construction in the following proposition.
\end{construction}{}

\begin{proposition}\label{trr}
  Let $A$ be a $k$-algebra. There is a descending exhaustive complete $\Z$-indexed filtration $\mathrm{Fil}^* \TR(A,p)$ on $\TR(A,p)$ such that $\mathrm{gr}^n \TR(A,p) \simeq \L W\Omega^n_A [n].$ This may be called the ``motivic" filtration on $\TR(A,p).$
\end{proposition}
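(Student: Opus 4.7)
The plan is to formalize the sketch already given in Construction~\ref{cons1}. I would proceed in four steps.

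First, I would establish the filtration at finite level $\TR^r(A,p)$. For $A \in \mathrm{Poly}_k$, Hesselholt's theorem gives $\pi_n \TR^r(A,p) \cong \bigoplus_{i} W_r\Omega^i_A$ (summing over $0 \le i \le n$ with $n-i$ even), and in particular $\TR^r(A,p)$ is concentrated in even degrees in a precise way that is functorial in $A$. Thus the Postnikov tower $\tau_{\ge *} \TR^r(A,p)$ defines a functor $\mathrm{Poly}_k \to \mathrm{Fil}(\mathrm{Sp})$, which I would left Kan extend to $\mathrm{Ani}(\mathrm{Alg}_k)$ and then restrict to $\mathrm{Alg}_k$, producing $\Fil^* \TR^r(A,p)$. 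Because animation commutes with colimits and the Postnikov tower on polynomial algebras is complete and exhaustive termwise, these properties persist after animation (completeness follows since each $\Fil^n$ has, on polynomial inputs, homotopy groups concentrated in degrees $\ge n$, a property preserved by sifted colimits, hence by animation). Passing to graded pieces then yields the identification \eqref{eq1}, using that the animation of the functor $A \mapsto W_r\Omega^i_A[n]$ on polynomial algebras is by definition $\L W_r\Omega^i_A[n]$.

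Second, I would pass to $\TR(A,p) = \varprojlim_{r,R} \TR^r(A,p)$ and define $\Fil^n \TR(A,p) := \varprojlim_{r} \Fil^n \TR^r(A,p)$ along the maps induced by the restriction. Since limits of complete descending filtrations are complete, $\Fil^*\TR(A,p)$ is automatically complete. For the graded pieces, the key input from \cite[Thm.~B]{Hess} is the identification, for $A$ a polynomial algebra, of the restriction map $R$ on the $i$-th summand of $\mathrm{gr}^n \TR^r(A,p)$ with $(p\lambda_{r+1})^{(n-i)/2} R$ for a unit $\lambda_{r+1} \in (\Z/p^{r+1}\Z)^\times$, which animates to a description of the same shape on all of $\mathrm{Alg}_k$.

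Third, I would compute the limit. For $i = n$ the factor of $p$ disappears and the transition maps are simply the restrictions of the de Rham--Witt complex, giving $\varprojlim_r \L W_r \Omega^n_A[n] \simeq \L W\Omega^n_A[n]$ by definition. For $i < n$ the transition maps acquire a positive power of $p$; since for $A$ a polynomial algebra each $W_r \Omega^i_A[n]$ is a $\Z/p^r$-module on which $p$ acts topologically nilpotently along $R$, we get $\varprojlim_r \L W_r \Omega^i_A[n] \simeq 0$ after animation (this uses that derived $p$-completeness and the corresponding vanishing of $\varprojlim$ are preserved under sifted colimits). Combining these yields $\mathrm{gr}^n \TR(A,p) \simeq \L W\Omega^n_A[n]$.

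Finally, exhaustiveness of $\Fil^* \TR(A,p)$ requires justification because an inverse limit of exhaustive filtrations need not be exhaustive in general. Here I would argue it via the cofiber sequences relating successive levels: since $\mathrm{gr}^n$ has homotopy concentrated in degrees $\ge n$ after the limit, $\Fil^n \TR(A,p) \to \TR(A,p)$ has cofiber with homotopy bounded below by $n$, so $\varinjlim_n \Fil^{-n}\TR(A,p)/\Fil^n\TR(A,p) \simeq \TR(A,p)$. The main obstacle I expect is the vanishing $\varprojlim_r \L W_r\Omega^i_A[n] \simeq 0$ for $i < n$: one has to check carefully that the vanishing for polynomial algebras, where it is a statement about $p$-power multiplication on discrete groups, survives animation and the passage to the inverse limit.
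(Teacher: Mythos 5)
Your proposal follows the paper's proof essentially step for step: animate the Postnikov filtration on $\TR^r(-,p)$ from polynomial algebras using Hesselholt's even/odd degree decomposition, identify the graded pieces with direct sums of $\L W_r\Omega^i_A[n]$, pass to the inverse limit over restriction, and use Hesselholt's description of $R$ on homotopy as $(p\lambda_{r+1})^{(n-i)/2}R$ to kill the $i<n$ summands in the limit. Your argument for completeness (connectivity of $\Fil^k\TR^r$ grows with $k$, a property preserved by sifted colimits, hence by animation) is exactly the argument in the paper.

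Two small remarks. First, the parenthetical justification for the vanishing $\varprojlim_r \L W_r\Omega^i_A[n]\simeq 0$ for $i<n$ --- that ``derived $p$-completeness and the corresponding vanishing of $\varprojlim$ are preserved under sifted colimits'' --- is not the right mechanism, since inverse limits do not commute with sifted colimits. The correct and simpler argument, still internal to your framework, is a pro-zero one: writing $c=(n-i)/2\ge 1$, the composite transition $W_{r+s}\Omega^i_A\to W_r\Omega^i_A$ is a unit times $p^{cs}R^s$; since $W_r\Omega^i_A$ is killed by $p^r$ and $R^s$ is surjective for polynomial $A$, this composite is the zero map whenever $cs\ge r$. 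Being a zero natural transformation on $\mathrm{Poly}_k$, it stays zero after left Kan extension, so the animated tower is pro-zero and its inverse limit vanishes for all $A\in\mathrm{Alg}_k$. Second, the worry about exhaustiveness at the end is a non-issue: $\TR^r(A,p)$ is connective, so $\Fil^n\TR^r(A,p)=\tau_{\ge n}\TR^r(A,p)=\TR^r(A,p)$ for all $n\le 0$ on polynomial inputs, and this identification persists under animation and inverse limit, making the filtration constant (and hence trivially exhaustive) in nonpositive degrees.
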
{}

\begin{proof}
    The construction of the filtration and the description of the graded pieces follow from the above discussion. In order to prove the completeness of the filtration $\Fil^* \TR(A,p)$, by construction, it would be enough to show the completeness of $\Fil^*\TR^r (A,p).$ To this end, it is enough to show that $\Fil^k \TR^r(A,p)$ is $k$-connective. By considering sifted colimits, this reduces to the case when $A$ is a polynomial algebra, in which case the result follows since the filtration is given by the Postnikov filtration.
\end{proof}{}

\begin{corollary}[{\cite[Thm.~6.14]{Ben}}]Let $S$ be a perfect ring. Then $\TR(S,p) \simeq W(S).$
\end{corollary}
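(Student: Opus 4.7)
The plan is to deduce this corollary directly from the two preceding ingredients in this section: the motivic filtration of \cref{trr} on $\TR(S,p)$ and the vanishing of higher animated de Rham--Witt forms on perfect rings established in \cref{perfectwitt}. Once both are in hand the argument is essentially formal manipulation of a complete, exhaustive filtration.

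First, I would apply \cref{trr} with $A = S$ to obtain a descending exhaustive complete $\Z$-indexed filtration $\Fil^* \TR(S,p)$ on $\TR(S,p)$ with $\mathrm{gr}^n \TR(S,p) \simeq \LL W\Omega^n_S[n]$. Since $S$ is perfect, \cref{perfectwitt} gives $\LL W\Omega^n_S \simeq 0$ for all $n > 0$, while $\LL W\Omega^0_S \simeq W(S)$ and $\LL W\Omega^n_S = 0$ for $n<0$ by convention. Consequently the only non-vanishing graded piece of the filtration is $\mathrm{gr}^0 \TR(S,p) \simeq W(S)$.

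Second, I would combine this concentration of the graded pieces with completeness and exhaustiveness. For $n \ge 1$ the fiber sequences $\Fil^{n+1} \TR(S,p) \to \Fil^n \TR(S,p) \to \mathrm{gr}^n \TR(S,p)$ show that $\Fil^n \TR(S,p)$ is constant in $n$; completeness forces this common value to be zero, so $\Fil^1 \TR(S,p) \simeq 0$. Symmetrically, for $n \le 0$ the filtration is constant in $n$, and exhaustiveness identifies this common value with $\TR(S,p)$, so $\Fil^0 \TR(S,p) \simeq \TR(S,p)$. The fiber sequence $\Fil^1 \TR(S,p) \to \Fil^0 \TR(S,p) \to \mathrm{gr}^0 \TR(S,p)$ then yields $\TR(S,p) \simeq W(S)$.

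There is no real obstacle to overcome at this stage, since all the substantive work was already carried out in building the motivic filtration on $\TR(\cdot,p)$ and in establishing \cref{perfectwitt} via the analysis of animated Cartier operators. This corollary is best viewed as a consistency check confirming that the filtration of \cref{trr} recovers the previously known description of $\TR$ on perfect rings in the expected way.
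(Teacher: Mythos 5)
Your proposal is correct and follows the same route the paper intends: the paper's proof is simply ``Follows from \cref{perfectwitt} and \cref{trr},'' and your argument spells out the standard filtration bookkeeping (vanishing of all graded pieces except $\mathrm{gr}^0 \simeq W(S)$, then completeness and exhaustiveness to collapse $\Fil^*$) that this citation implicitly relies on.
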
{}

\begin{proof}
Follows from \cref{perfectwitt} and \cref{trr}. 
\end{proof}{}

Let us now consider the category of quasisyntomic $k$-algebras $\mathrm{qSyn}_k,$ thought of as a Grothendieck site equipped with the quasisyntomic topology. If $A$ is in $\mathrm{qSyn}_k,$ we will give a different construction of $\Fil^* \TR(A,p)$ by quasisyntomic descent that will be important later in this paper. By \cite[Prop.~4.31]{BMS2} any quasisyntomic sheaf on $\mathrm{qSyn}_k$ is determined by its values on quasiregular semiperfect algebras.

\begin{proposition}\label{descent}
    Let $A \in \mathrm{qSyn}_k.$ The functor $A \mapsto \mathrm{Fil}^* \TR(A,p)$ is a quasisyntomic sheaf with values in filtered spectra. 
\end{proposition}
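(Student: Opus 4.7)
The plan is to reduce sheaf-ness of the filtered object $\Fil^{\smallbullet}\TR(-,p)$ to sheaf-ness of each individual layer $\Fil^n\TR(-,p)$, and then to deduce this from the sheaf property of the graded pieces together with the completeness established in \cref{trr}. Indeed, a filtered spectrum-valued functor is a quasisyntomic sheaf precisely when each $\Fil^n$ is a sheaf of spectra, so the task reduces to checking the latter for all $n \in \Z$.

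First I would handle the graded pieces. By construction, $\mathrm{gr}^n\TR(A,p) \simeq \L W\Omega^n_A[n]$, and \cref{flatdescent} already shows that $A \mapsto \L W\Omega^n_A$ is even an fpqc sheaf on $\mathrm{Alg}_k$; shifting is harmless, and every fpqc sheaf restricts to a quasisyntomic sheaf on $\mathrm{qSyn}_k$ since quasisyntomic covers are in particular fpqc covers. Hence each $\mathrm{gr}^n\TR(-,p)$ is a quasisyntomic sheaf valued in spectra.

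Next, for fixed $n$ and any $m \geq n$, I would consider the finite quotients $\Fil^n/\Fil^m$. These fit into fiber sequences
\[
\mathrm{gr}^{m-1}\TR(-,p) \longrightarrow \Fil^n/\Fil^m \longrightarrow \Fil^n/\Fil^{m-1},
\]
so a straightforward induction on $m-n$, using that the class of quasisyntomic sheaves is closed under fibers and cofibers in spectra, shows that each $\Fil^n/\Fil^m$ is a quasisyntomic sheaf. Since the filtration is complete by \cref{trr}, we have $\Fil^n\TR(-,p) \simeq \varprojlim_m \Fil^n/\Fil^m$ functorially in $A$, and this exhibits $\Fil^n\TR(-,p)$ as a limit of quasisyntomic sheaves; limits of sheaves are sheaves, so $\Fil^n\TR(-,p)$ is a quasisyntomic sheaf.

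The main step requiring care is the interchange of the functorial limit $\varprojlim_m$ with the sheaf condition, i.e.\ ensuring that the completeness in \cref{trr} holds uniformly enough in $A \in \mathrm{qSyn}_k$ to give an honest equivalence of functors $\Fil^n\TR(-,p) \simeq \varprojlim_m \Fil^n/\Fil^m$ before sheafifying. But this is built into the proof of \cref{trr}: the completeness there was deduced from $k$-connectivity of $\Fil^k\TR^r(A,p)$ already at the level of polynomial algebras, and left Kan extension preserves connectivity, so the same connectivity bound persists on all of $\mathrm{Ani}(\mathrm{Alg}_k)$ and in particular on $\mathrm{qSyn}_k$. Hence the limit description holds functorially and the argument above goes through.
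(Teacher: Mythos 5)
Your proof is correct and follows essentially the same route as the paper: reduce via completeness of the filtration to sheaf-ness of the graded pieces, then invoke \cref{flatdescent}. The paper compresses this into one sentence; you spell out the reduction more carefully, including the worthwhile check that the connectivity bound from the proof of \cref{trr} makes the limit description $\Fil^n \simeq \varprojlim_m \Fil^n/\Fil^m$ hold functorially in $A$ before sheafifying.
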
{}

\begin{proof}
Since $\Fil^* \TR (A,p)$ is a complete descending filtration on $\TR(A,p)$, by considering limits, it would be enough to prove that $A \mapsto \mathrm{gr}^n\TR(A,p) \simeq \L W\Omega^n_A [n] $ is a sheaf for all $n.$ The latter follows from \cref{flatdescent}.
\end{proof}{}

\begin{proposition}\label{evenn}
    Let $R$ be a quasiregular semiperfect algebra. Then $\pi_* \TR(R,p)$ is concentrated in even degrees.
\end{proposition}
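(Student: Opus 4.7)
The plan is to combine the motivic filtration $\Fil^* \TR(R,p)$ produced by \cref{trr} with the discreteness statement of \cref{fixthis}. By \cref{trr} the graded pieces are $\gr^n \TR(R,p) \simeq \LL W\Omega^n_R[n]$, and by \cref{fixthis} the object $\LL W\Omega^n_R[-n]$ is discrete for each $n \ge 0$. Therefore $\gr^n \TR(R,p)$ has a single nonzero homotopy group, concentrated in degree $2n$, which is even.

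From this I would deduce by induction on $n$ that each finite quotient $\TR(R,p)/\Fil^{n+1} \TR(R,p)$ has homotopy concentrated in even degrees. The base case $\TR(R,p)/\Fil^0 \TR(R,p) = 0$ is immediate since the filtration comes from the Postnikov filtration on a connective spectrum (so $\Fil^0 \TR(R,p) = \TR(R,p)$). The inductive step uses the fiber sequence $\gr^n \TR(R,p) \to \TR(R,p)/\Fil^{n+1} \TR(R,p) \to \TR(R,p)/\Fil^n \TR(R,p)$: in the long exact sequence on $\pi_k$, both the $\gr^n$ term and the $\TR/\Fil^n$ term vanish when $k$ is odd, forcing the middle term to vanish as well.

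To pass from the finite quotients $\TR(R,p)/\Fil^{n+1}$ to $\TR(R,p)$ itself, I would invoke completeness together with a connectivity estimate: since $\gr^k \TR(R,p)$ is $(2k)$-connective for every $k$, a Milnor-sequence argument applied to the tower $\{\Fil^n/\Fil^m\}_{m\ge n}$ (whose terms are iterated extensions of graded pieces of index $\ge n$) shows that $\Fil^n \TR(R,p)$ is itself $(2n)$-connective. Consequently, for any fixed $k$ the tower $\{\pi_k(\TR(R,p)/\Fil^n)\}_n$ stabilizes once $2n > k$, and the Milnor sequence for $\TR(R,p) \simeq \varprojlim_n \TR(R,p)/\Fil^n$ identifies $\pi_k \TR(R,p)$ with $\pi_k(\TR(R,p)/\Fil^N)$ for $N$ sufficiently large. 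Combined with the previous step this yields $\pi_k \TR(R,p) = 0$ for odd $k$.

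The one step that genuinely requires care is the connectivity estimate $\Fil^n \TR(R,p) \in D_{\ge 2n}$; this is the main technical input, but it follows formally from completeness of the filtration together with the fact that every $\gr^k$ with $k \ge n$ is at least $(2n)$-connective. Everything else is a standard degeneration-by-degree-reasons argument for a complete filtration whose graded pieces are concentrated in sparse, evenly-spaced degrees.
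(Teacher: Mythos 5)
Your proposal is correct and takes essentially the same approach as the paper: the paper's proof is a one-liner citing \cref{trr} and \cref{fixthis}, and you have simply spelled out the standard devissage argument (induction up the finite quotients $\TR/\Fil^{n+1}$, plus a connectivity estimate on $\Fil^n$ to pass to the inverse limit via Milnor sequences) that makes that citation legitimate.
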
{}
\begin{proof}
 Follows from \cref{trr} and \cref{fixthis}.
\end{proof}{}
The following proposition, along with \cref{descent}, gives an alternative way to understand the filtration on $p$-typical $\TR$ via quasisyntomic descent (see \cite[Prop.~4.31]{BMS2}).
\begin{proposition}\label{doublespeed}
Let $R$ be a quasiregular semiperfect algebra. Then $$\Fil^n \TR(R,p) \simeq \tau_{\ge 2n} \TR(R,p).$$
\end{proposition}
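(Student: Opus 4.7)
The plan is to identify the motivic filtration with the double-speed Postnikov filtration by combining Propositions \ref{trr}, \ref{fixthis}, and \ref{evenn}. By Proposition \ref{trr}, $\gr^n \TR(R,p) \simeq \LL W\Omega^n_R[n]$, and by Proposition \ref{fixthis} each $\LL W\Omega^n_R$ is concentrated in homological degree $n$, so every graded piece lives in the single degree $2n$. Together with the evenness of $\pi_*\TR(R,p)$ from Proposition \ref{evenn}, this should force $\Fil^n \TR(R,p)$ to be exactly $\tau_{\ge 2n}\TR(R,p)$.

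First, I would establish the connectivity bound $\Fil^n \TR(R,p) \in D_{\ge 2n}$. Using the fiber sequences $\gr^{n+m}\TR(R,p) \to \Fil^n/\Fil^{n+m+1} \to \Fil^n/\Fil^{n+m}$ and induction on $m$, every finite quotient $\Fil^n/\Fil^{n+m}$ lies in $D_{\ge 2n}$, since each graded piece appearing is concentrated in degree $\ge 2n$. Completeness of the filtration (Proposition \ref{trr}) gives $\Fil^n \TR(R,p) \simeq \varprojlim_m \Fil^n/\Fil^{n+m}$, and the Milnor $\varprojlim^1$ sequence loses at most one degree of connectivity, yielding $\Fil^n \in D_{\ge 2n-1}$ at this stage.

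To upgrade to $\Fil^n \in D_{\ge 2n}$, I would exploit the fiber sequence $\Fil^n \TR(R,p) \to \TR(R,p) \to \TR(R,p)/\Fil^n$. The cofiber is a finite iterated extension of $\gr^0, \ldots, \gr^{n-1}$, hence lies in $D_{\le 2n-2}$. The relevant portion of the long exact sequence then reads $\pi_{2n}(\TR/\Fil^n) \to \pi_{2n-1}\Fil^n \to \pi_{2n-1}\TR(R,p) \to \pi_{2n-1}(\TR/\Fil^n)$, where the outer terms vanish by the cofiber bound and the middle term vanishes by Proposition \ref{evenn}. Thus $\pi_{2n-1}\Fil^n = 0$, promoting the connectivity estimate to $\Fil^n \in D_{\ge 2n}$.

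Finally, the fiber sequence $\Fil^n \TR(R,p) \to \TR(R,p) \to \TR(R,p)/\Fil^n$ with $\Fil^n \in D_{\ge 2n}$ and cofiber in $D_{\le 2n-1}$ is necessarily a Postnikov truncation sequence by the universal property of the $t$-structure, so $\Fil^n \TR(R,p) \simeq \tau_{\ge 2n}\TR(R,p)$. The main (if slight) obstacle is precisely the connectivity upgrade from $D_{\ge 2n-1}$ to $D_{\ge 2n}$; this is where the evenness statement of Proposition \ref{evenn} is essential, since completeness of the filtration alone leaves a gap of one degree that cannot be closed formally.
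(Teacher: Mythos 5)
Your proof is correct and is essentially an expansion of the paper's one-line argument, which simply observes that each graded piece $\gr^n \TR(R,p) \simeq \LL W\Omega^n_R[n]$ is concentrated in the single degree $2n$ (by \cref{fixthis}) and that this forces the motivic filtration to be the double-speed Postnikov filtration. You spell out the connectivity and coconnectivity estimates and the Postnikov recognition step carefully, and the way you use \cref{evenn} to kill $\pi_{2n-1}\Fil^n$ is valid.

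One small inaccuracy in your closing remark: you claim the passage from $\Fil^n \in D_{\ge 2n-1}$ to $\Fil^n \in D_{\ge 2n}$ ``cannot be closed formally'' and that evenness is essential. In fact it can be closed formally here. For $m \ge 1$, the fiber sequence $\gr^{n+m} \to \Fil^n/\Fil^{n+m+1} \to \Fil^n/\Fil^{n+m}$ has $\gr^{n+m}$ concentrated in degree $2n+2m > 2n$, so the transition map induces an isomorphism on $\pi_{2n}$. The tower $\{\pi_{2n}(\Fil^n/\Fil^{n+m})\}_{m\ge 1}$ is therefore constant, its $\varprojlim^1$ vanishes, and the Milnor sequence gives $\pi_{2n-1}\Fil^n = 0$ directly. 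What makes this work is not evenness of $\pi_*\TR$ but the fact that each graded piece lives in a single degree (which is also the ultimate source of the evenness statement \cref{evenn}, so the two observations are two sides of the same coin rather than independent inputs). Your route via \cref{evenn} is perfectly fine; it is just not the only way to close the gap, and the paper's phrasing reflects that the single-degree concentration alone already does the job.
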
{}
\begin{proof}
    Follows because $\gr^n \TR(R,p)[-2n]$ is discrete.
\end{proof}

Note that for any $\F_p$-algebra $A,$ there is a canonical product decomposition $\TR(A) \simeq \prod_{(k,p)=1} \TR(A,p).$ One may define a filtration 
$ \Fil^* \TR(A) \coloneqq \prod_{(k,p)=1} \Fil^* \TR(A,p).$ This equips $\TR(A)$ with a descending complete exhaustive filtration such that $\gr^n \TR(A) \simeq \prod_{(k,p)=1}\L W\Omega^n_A [n].$ Our previous discussion on $\TR(A,p)$ implies the following corollaries.

\begin{corollary}\label{cor1}
    Let $A \in \mathrm{qSyn}_k.$ The functor $A \mapsto \mathrm{Fil}^* \TR(A)$ is a quasisyntomic sheaf with values in filtered spectra. 
\end{corollary}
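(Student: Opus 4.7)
The plan is to reduce the statement to the corresponding statement for the $p$-typical part $\TR(A,p)$, which has already been established in \cref{descent}, by exploiting the product decomposition $\TR(A) \simeq \prod_{(k,p)=1} \TR(A,p)$ along with the fact that the filtration $\Fil^* \TR(A)$ was defined coordinatewise as $\prod_{(k,p)=1} \Fil^* \TR(A,p)$.

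First, I would recall that the quasisyntomic sheaf condition with values in filtered spectra is a purely limit-theoretic condition: a functor $F\colon \mathrm{qSyn}_k^{\op} \to \mathrm{FilSp}$ is a sheaf precisely when, for every quasisyntomic cover $A \to B$, the natural map $F(A) \to \mathrm{Tot}(F(B^{\smallbullet/A}))$ is an equivalence, where $B^{\smallbullet/A}$ is the \v{C}ech nerve. Since arbitrary products commute with totalizations (limits commute with limits) in a stable $\infty$-category such as $\mathrm{FilSp}$, it follows that any product of quasisyntomic sheaves is a quasisyntomic sheaf.

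Next, I would apply \cref{descent}, which tells us that for each integer $k$ coprime to $p$, the functor $A \mapsto \Fil^* \TR(A,p)$ is a quasisyntomic sheaf on $\mathrm{qSyn}_k$ with values in filtered spectra (the index $k$ plays no role beyond labelling a copy of the same functor). Taking the product over all such $k$, the observation in the previous paragraph yields that
\[
A \longmapsto \Fil^* \TR(A) \;=\; \prod_{(k,p)=1} \Fil^* \TR(A,p)
\]
is also a quasisyntomic sheaf on $\mathrm{qSyn}_k$ with values in filtered spectra, which is the desired conclusion.

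There is no real obstacle here; the essential content is already in \cref{descent}, and the only verification required is the formal compatibility of products with the sheaf condition, which is immediate from the stability of $\mathrm{FilSp}$. If one wished, one could also verify completeness and exhaustiveness of $\Fil^* \TR(A)$ directly from the corresponding properties of each $\Fil^* \TR(A,p)$ established in \cref{trr}, but this is not needed for the sheaf statement itself.
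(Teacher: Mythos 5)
Your proof is correct and follows exactly the argument the paper has in mind: since $\Fil^*\TR(A)$ is defined as the product $\prod_{(k,p)=1}\Fil^*\TR(A,p)$, the sheaf property follows from \cref{descent} together with the fact that a product of sheaves is a sheaf (limits commute with limits). The paper leaves this as an immediate consequence of the preceding discussion, and your write-up fills in exactly that reasoning.
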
{}

\begin{corollary}\label{cor3}
    Let $R$ be a quasiregular semiperfect algebra. Then $\pi_* \TR(R)$ is concentrated in even degrees, and $$\Fil^n \TR(R) \simeq \tau_{\ge 2n} \TR(R).$$
\end{corollary}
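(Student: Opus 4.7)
The plan is to reduce both statements to their $p$-typical analogues, \cref{evenn} and \cref{doublespeed}, via the product decomposition $\TR(R) \simeq \prod_{(k,p)=1} \TR(R,p)$ together with the compatible decomposition $\Fil^n \TR(R) \coloneqq \prod_{(k,p)=1} \Fil^n \TR(R,p)$ introduced just before the statement.

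First, for the evenness claim, I would recall that homotopy groups of spectra commute with (infinite) products, so
\[
\pi_m \TR(R) \;\simeq\; \prod_{(k,p)=1} \pi_m \TR(R,p).
\]
By \cref{evenn}, each factor vanishes when $m$ is odd, hence $\pi_* \TR(R)$ is concentrated in even degrees.

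Next, for the identification $\Fil^n \TR(R) \simeq \tau_{\ge 2n}\TR(R)$, I would invoke \cref{doublespeed} factorwise to write
\[
\Fil^n \TR(R) \;=\; \prod_{(k,p)=1} \Fil^n \TR(R,p) \;\simeq\; \prod_{(k,p)=1} \tau_{\ge 2n}\TR(R,p).
\]
The remaining point is that $\tau_{\ge 2n}$ commutes with products of spectra: it is a right adjoint to the inclusion of $2n$-connective spectra (equivalently, one checks directly that a product of $2n$-connective spectra is $2n$-connective using that $\pi_i$ commutes with products, and that the canonical map $\prod \tau_{\ge 2n}X_i \to \prod X_i$ is a $\pi_i$-isomorphism for $i \ge 2n$). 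Hence
\[
\prod_{(k,p)=1}\tau_{\ge 2n}\TR(R,p) \;\simeq\; \tau_{\ge 2n}\prod_{(k,p)=1}\TR(R,p) \;\simeq\; \tau_{\ge 2n}\TR(R),
\]
which yields the desired isomorphism.

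There is no real obstacle here: the only substantive inputs, evenness and the coincidence of the motivic filtration with the double-speed Postnikov filtration, have already been established for each $p$-typical summand, and the passage to the product simply uses the exactness of products of spectra and the compatibility of truncation with limits.
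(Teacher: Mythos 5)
Your argument is correct and coincides with the one intended by the paper: the text immediately preceding \cref{cor3} defines $\Fil^*\TR(A)$ as the product of the $p$-typical filtrations and states that the previous discussion of $\TR(A,p)$ implies the corollaries, which is exactly the reduction you carry out via \cref{evenn}, \cref{doublespeed}, and the fact that $\pi_*$ and $\tau_{\ge 2n}$ commute with products of spectra.
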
{}

\section{Pro-system of truncated polynomial rings}\label{sec3} 
Let $S$ be a quasisyntomic $\F_p$-algebra. In \cite{BMS2}, Bhatt--Morrow--Scholze constructed a ``motivic" filtration $\Fil^* \TC (S)$ on $\TC(S)$ where the graded pieces $\mathrm{gr}^n \TC(S)$ are given by $\Z_p (n)(S)[2n].$ The goal of this section is to identify the induced motivic filtration on $\varprojlim \mathrm{TC}(R[t]/t^k)$, when $R$ is a quasiregular semiperfect algebra, with the ``odd filtration" (see \cref{prop1}). We will use the description of the Tate twists $\Z_p(n)$ in terms of Nygaard filtration on derived crystalline cohomology. To this end, we recall a few notations and basic properties.

\begin{notation}Let $A$ be an $\F_p$-algebra.
   We will use $\dR$, $\Fil^*_{\Hodge} \dR$ and $\Fil^*_{\conj} \dR$ to denote the functors $\L \Omega^*_{(\cdot)},$ $\Fil^*_{\Hodge}\L\Omega^*_{(\cdot)}$ and $\Fil^*_{\conj}\L\Omega^*_{(\cdot)}$. Let $\widehat{\dR}$ denote the completion of $\dR$ with respect to $\mathrm{Fil}^*_{\Hodge} \dR$, so that it is naturally equipped with a filtration $\Fil^*_{\Hodge} \widehat {\dR}.$ By construction, $\mathrm{gr}^n_{\Hodge} \dR(A) \simeq \mathrm{gr}^n_{\Hodge} \widehat{\dR}(A) \simeq \wedge^n\LL_{A/\F_p}[-n].$ By animating the Cartier isomorphism, $\mathrm{gr}^n_{\conj} \dR(A) \simeq \wedge^n \LL_{A^{(p)}/ \F_p}[-n]$ (see \cite[Prop.~3.5]{Bha12}). The following proposition discusses certain monoidal properties of these functors that will be useful later. 
\end{notation}{}

\begin{proposition}\label{convv}
 Let $A$ and $B$ be two $\F_p$-algebras. Then,
\begin{enumerate}
    \item $\dR(A \otimes_{\F_p} B) \simeq \dR(A) \otimes_{\F_p} \dR(B).$

\item $\Fil^n_{\Hodge}\dR(A \otimes_{\F_p} B) \simeq \colim_{j+k \ge n} \Fil^{j}_{\Hodge} \dR(A) \otimes_{\F _p} \Fil^{k}_{\Hodge} \dR(B).$

\item $\Fil^n_{\conj}\dR(A \otimes_{\F_p} B) \simeq \colim_{j+k \le n} \Fil^{j}_{\conj} \dR(A) \otimes_{\F _p} \Fil^{k}_{\conj} \dR(B).$
\end{enumerate}{}
\end{proposition}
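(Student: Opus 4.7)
The plan is to reduce all three statements to the case of polynomial algebras $A, B \in \mathrm{Poly}_{\F_p}$, where the classical Künneth formula for the de Rham complex applies, and then to extend to general $\F_p$-algebras via left Kan extension. For the extension step, each side of each of the three displayed equivalences, viewed as a functor of $A$ with $B$ fixed (and symmetrically in $B$), preserves sifted colimits: on the left, $- \otimes_{\F_p} B$ preserves sifted colimits and $\dR$, $\Fil^{\ast}_{\Hodge}\dR$, $\Fil^{\ast}_{\conj}\dR$ are defined as left Kan extensions from $\mathrm{Poly}_{\F_p}$; on the right, colimits commute with colimits and $\otimes_{\F_p}$ preserves sifted colimits in each variable. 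It therefore suffices to verify the three statements when $A, B$ are polynomial.

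For polynomial $A, B$, assertion (1) is the classical identity $\Omega^{\ast}_{A \otimes_{\F_p} B} \simeq \Omega^{\ast}_A \otimes_{\F_p} \Omega^{\ast}_B$, and the derived tensor product here agrees with the underived one since both factors are degreewise $\F_p$-free. Assertion (2) then follows immediately from the formula $\Fil^n_{\Hodge}\dR(A) = \Omega^{\ge n}_A$ on polynomial algebras: under the Künneth bi-grading, $\Omega^{\ge n}_{A \otimes B}$ identifies with $\bigoplus_{i+l \ge n}\Omega^i_A \otimes \Omega^l_B$, which is manifestly the union, and hence the colimit in $D(\F_p)$, of the subcomplexes $\Omega^{\ge j}_A \otimes \Omega^{\ge k}_B$ as $(j, k)$ ranges over pairs with $j + k \ge n$.

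For assertion (3), we reduce to the following general fact: for bounded-below complexes $C, D$ of $\F_p$-modules (in cohomological convention), the natural map
\[ \colim_{j+k \le n}\tau^{\le j}C \otimes_{\F_p} \tau^{\le k}D \longrightarrow \tau^{\le n}(C \otimes_{\F_p} D) \]
is an equivalence, where $\tau^{\le n}$ denotes the good truncation keeping cohomological degrees $\le n$. We apply this to $C = \Omega^{\ast}_A$ and $D = \Omega^{\ast}_B$, combined with the identification $\Fil^n_{\conj}\dR(A) \simeq \tau^{\le n}\Omega^{\ast}_A$ for polynomial $A$ (a consequence of the Cartier isomorphism, giving the required $\gr^n \simeq \Omega^n_{A^{(p)}}[-n]$). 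To establish the general fact, set $F_m := \colim_{j+k \le m}\tau^{\le j}C \otimes \tau^{\le k}D$; each $F_m$ is concentrated in cohomological degrees $\le m$, and a cell-attachment analysis of the maximal antichain $\{(j, k) : j+k = m\}$ identifies the successive cofiber as
\[ F_m/F_{m-1} \simeq \bigoplus_{j+k = m}H^j(C)[-j] \otimes_{\F_p} H^k(D)[-k], \]
which is concentrated in cohomological degree $m$. Since $\F_p$ is a field, the Künneth formula has no Tor contributions, so this evaluates to $H^m(C \otimes_{\F_p} D)[-m]$, matching the associated graded of the Postnikov filtration $\{\tau^{\le m}(C \otimes D)\}$. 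An induction on $m$ then forces $F_n \simeq \tau^{\le n}(C \otimes D)$. The main obstacle is this $\tau^{\le n}$-convolution identification used in (3); the Hodge case (2) reduces directly to the bi-grading of $\Omega^{\ast}$, and (1) is the classical Künneth formula.
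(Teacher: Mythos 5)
Your proposal is correct and follows essentially the same route as the paper: reduce to polynomial algebras by left Kan extension (the paper's ``animation''), then verify the three Day-convolution identities on polynomial algebras by a graded-pieces/K\"unneth computation. The paper delegates the polynomial-algebra step to a citation of \cite[Lem.~5.2]{BMS2}, whereas you carry it out by hand --- for (2) via the bi-grading of $\Omega^{*}_{A\otimes B}$ (where the step ``union, hence colimit in $D(\F_p)$'' deserves a word of justification, since the indexing poset is not filtered and one must observe that the iterated pushouts of these degreewise-free subcomplexes do compute the union) and for (3) via the general truncation-convolution lemma, whose cell-attachment cofiber computation is precisely the identification of graded pieces that the cited lemma provides.
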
{}
\begin{proof}
    By animation, these can be checked by reducing to polynomial algebras. For polynomial algebras, one can further reduce it to checking on graded pieces and use \cite[Lem.~5.2]{BMS2} (\textit{cf.}~\cite{fild}).
\end{proof}{}

\begin{remark}
In the language of filtered derived categories $DF(\F_p)$ as in \cite{BMS2}, the construction appearing in the right hand side is called the Day convolution, which turns $DF(\F_p)$ into a symmetric monoidal stable $\infty$-category. One also has the completed filtered derived category $\widehat{DF}(\F_p)$, equipped with an induced monoidal structure. It follows from \cref{convv} that $\Fil^*_{\Hodge}\widehat{\dR}(A \otimes_{\F_p} B) \simeq \Fil^*_{\Hodge}\widehat{\dR}(A) \hat{\otimes} \Fil^*_{\Hodge}\widehat{\dR}(B),$ where the right hand side denotes the monoidal operation on $\widehat{DF}(\F_p).$
\end{remark}{}
\begin{notation}
Let $A$ be an $\F_p$-algebra. We let $R\Gamma_{\crys}(A)$ denote derived crystalline cohomology, and $\Fil^*_{\mathrm{Nyg}} R\Gamma_{\crys}(A)$ denote the Nygaard filtration; they are both defined to be animated from polynomial algebras. The associated Nygaard completed object will be dentoted by $\widehat{R\Gamma}_{\crys} (A)$, which is naturally equipped with a filtration $\Fil^*_{\mathrm{Nyg}} \widehat{R\Gamma}_{\crys} (A).$ We will only apply these notions in the case when $A$ is a quasisyntomic $\F_p$-algebra, and we will assume $A$ to be quasisyntomic from now for simplicity. The proposition below lists some basic properties of the Nygaard filtration.
\end{notation}{}

\begin{proposition}
    Let $A$ be a quasisyntomic $\F_p$-algebra. Then,

\begin{enumerate}

\item $\widehat{R\Gamma}_{\crys}(A)/p \simeq \widehat{\dR}(A).$

\item $\mathrm{gr}^n_{\mathrm{Nyg}} R\Gamma_{\crys}(A) \simeq \mathrm{gr}^n_{\mathrm{Nyg}} \widehat{R\Gamma}_{\crys}(A) \simeq \Fil^n_{\conj} \dR(A). $

\item Multiplication by $p$ induces a natural map $p: 
\Fil^{n-1}_{\mathrm{Nyg}}R\Gamma_{\crys}(A) \to \Fil^{n}_{\mathrm{Nyg}}R\Gamma_{\crys}(A)$ whose cofiber is naturally isomorphic to $\Fil^n_{\Hodge} \dR(A).$

\item Multiplication by $p$ induces a natural map $p: 
\Fil^{n-1}_{\mathrm{Nyg}}\widehat{R\Gamma}_{\crys}(A) \to \Fil^{n}_{\mathrm{Nyg}}\widehat{R\Gamma}_{\crys}(A)$ whose cofiber is naturally isomorphic to $\Fil^n_{\Hodge} \widehat{\dR}(A).$

\item There is a divided Frobenius map $\varphi_n: \mathrm{Fil}^n_{\mathrm{Nyg}} R\Gamma_{\crys} (A) \to R\Gamma_{\crys}(A)$ which gives a fiber sequence
$$ \Z _p(n)(A) \xrightarrow{}\mathrm{Fil}^n_{\mathrm{Nyg}}{R\Gamma}_{\crys} (A) \xrightarrow{\varphi_n - \mathrm{can}}  R\Gamma_{\crys}(A).$$

\item There is a divided Frobenius map $\varphi_n: \mathrm{Fil}^n_{\mathrm{Nyg}} \widehat{R\Gamma}_{\crys} (A) \to \widehat{R\Gamma}_{\crys}(A)$ which gives a fiber sequence
$$ \Z _p(n) (A)\xrightarrow{}\mathrm{Fil}^n_{\mathrm{Nyg}}\widehat{R\Gamma}_{\crys}(A)  \xrightarrow{\varphi_n - \mathrm{can}}  \widehat{R\Gamma}_{\crys}(A).$$
\end{enumerate}{}
\end{proposition}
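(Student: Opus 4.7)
The plan is to combine two reduction principles: (i) animation (left Kan extension from $\mathrm{Poly}_{\F_p}$), which reduces questions about arbitrary $\F_p$-algebras to the polynomial case; and (ii) quasisyntomic descent via \cite[Prop.~4.31]{BMS2}, which further reduces statements about quasisyntomic algebras to quasiregular semiperfect (QRSP) algebras, where the relevant cohomology is discrete and admits explicit descriptions through the divided power envelope $D_{A^\flat}(I)$ with $I = \ker(A^\flat \to A)$ (as used in \cref{coollemma}).

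Part (2) follows by animating the classical Cartier isomorphism, which identifies the graded pieces of the Nygaard filtration on $W\Omega^*_A$ (for $A$ polynomial) with the brutal truncations of the de Rham complex of $A^{(p)}$, i.e.\ with $\Fil^n_{\conj}\dR(A)$; since Nygaard completion preserves graded pieces, the identification for $\widehat{R\Gamma}_{\crys}$ is automatic.

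The central technical input underlying parts (1), (3), and (4) is that modulo $p$ the Nygaard filtration on $R\Gamma_{\crys}(A)$ coincides with the Hodge filtration on $\dR(A)$. This I would verify via quasisyntomic descent to QRSP algebras, where both filtrations admit explicit descriptions (as ideals generated by powers, resp.\ divided powers, of $I$ in $D_{A^\flat}(I)$), reducing the statement to a direct combinatorial comparison. Granted this, part (3) follows quickly: by part (2), the Nygaard graded pieces are $\F_p$-modules, so $p$ strictly increases the Nygaard filtration, and the cofiber of $p:\Fil^{n-1}_{\mathrm{Nyg}} \to \Fil^n_{\mathrm{Nyg}}$ identifies with $\Fil^n_{\mathrm{Nyg}}/p \simeq \Fil^n_{\Hodge}\dR(A)$. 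Part (1) then follows by applying Nygaard completion to the fiber sequence $R\Gamma_{\crys}\xrightarrow{p}R\Gamma_{\crys}\to\dR$ and using the mod-$p$ identification at the level of graded pieces; part (4) is the Nygaard-completed analog of (3), obtained from (3) by passing to the completion and using that the cofiber computation commutes with it.

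Parts (5) and (6) constitute the standard BMS2 construction of the $p$-adic Tate twist: by design of the Nygaard filtration, the crystalline Frobenius $\varphi$ sends $\Fil^n_{\mathrm{Nyg}}$ into $p^n R\Gamma_{\crys}$, which gives the divided Frobenius $\varphi_n$, and the stated fiber sequence is then the definition of $\Z_p(n)$ as the equalizer of $\varphi_n$ and the canonical inclusion. The main obstacle is the core comparison "Nygaard mod $p$ equals Hodge" underlying parts (1), (3), (4), whose verification requires careful bookkeeping of divided-power combinatorics on QRSP algebras; the other parts then reduce either to animation of classical facts or to formal manipulations of filtered objects.
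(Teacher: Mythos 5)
Your overall plan — reduce to quasiregular semiperfect algebras via quasisyntomic descent (\cite[Prop.~4.31]{BMS2}) and then use the explicit divided-power description of $A_{\crys}$, the Nygaard, Hodge and conjugate filtrations — is exactly the paper's route; the paper's proof is simply ``see \cite[\S 8]{BMS2} for the quasiregular semiperfect case, then descend.'' So the approach matches.

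There is, however, a gap in your claimed derivation of part (3). You write that since the Nygaard graded pieces are $\F_p$-modules, ``$p$ strictly increases the Nygaard filtration, and the cofiber of $p\colon \Fil^{n-1}_{\mathrm{Nyg}} \to \Fil^{n}_{\mathrm{Nyg}}$ identifies with $\Fil^{n}_{\mathrm{Nyg}}/p$.'' Neither step is correct as stated. First, the $\F_p$-module property of $\mathrm{gr}^{n-1}_{\mathrm{Nyg}}$ only gives $p\,\Fil^{n-1}_{\mathrm{Nyg}} \subseteq \Fil^{n}_{\mathrm{Nyg}}$; it does \emph{not} give the stronger fact (which is what you actually need) that $pM \cap \Fil^{n}_{\mathrm{Nyg}} = p\,\Fil^{n-1}_{\mathrm{Nyg}}$, i.e.\ that multiplication by $p$ does not jump the filtration by more than one step. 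An abstract filtered $p$-torsion-free module with $\F_p$-graded pieces need not have this property (take $M=\Z_p$ with $\Fil^0 = \Z_p$, $\Fil^1 = \Fil^2 = p\Z_p$, $\Fil^k = p^{k-1}\Z_p$ for $k \geq 2$). Second, with the standard meaning of the derived quotient, $\mathrm{cofib}(p\colon \Fil^{n-1}_{\mathrm{Nyg}}\to\Fil^{n}_{\mathrm{Nyg}})$ is \emph{not} $\Fil^{n}_{\mathrm{Nyg}}/p$: already for $A=\F_p$ and $n\ge 1$, the cofiber is $0 = \Fil^{n}_{\Hodge}\dR(\F_p)$ but $\Fil^{n}_{\mathrm{Nyg}}/p = p^n\Z_p/p^{n+1}\Z_p \cong \F_p$. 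So the precise content of part (3) — including the index shift from $n-1$ to $n$ — is not a formal consequence of the slogan ``Nygaard mod $p$ equals Hodge'' plus $\F_p$-gradedness of $\mathrm{gr}^*_{\mathrm{Nyg}}$. It requires the explicit $p$-divisibility structure of the Nygaard filtration on $A_{\crys}$ for a quasiregular semiperfect ring, and this is exactly what \cite[\S 8]{BMS2} supplies; you should invoke it directly rather than try to re-derive it from the graded pieces.
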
{}

\begin{proof}
    See \cite[\S~8]{BMS2} for the case when $A$ is a quasiregular semiperfect $\F_p$-algebra. The proposition follows by quasisyntomic descent.
\end{proof}{}

Let us define $\F_p (n) (A) := \Z_p (n)(A)/p.$ We show that $\F_p (n) (A)$ may be described purely in terms of the animated Cartier theory from \cref{sec2}.

\begin{proposition} Let $A$ be quasisyntomic $\F_p$-algebra. Then we have a natural fiber sequence
    \begin{equation}\label{cartierbloch}
    \F_p(n)(A)[n] \to Z_1 \LL \Omega^n_A \xrightarrow[]{\mathrm{can} - C} \LL\Omega^n_A.
\end{equation}
\end{proposition}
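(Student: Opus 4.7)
The starting point is the Nygaard-filtered description of $\Z_p(n)(A)$ in item~(v) of the preceding proposition. Tensoring with $\F_p$ yields a fiber sequence
\[
\F_p(n)(A) \to \Fil^n_{\mathrm{Nyg}} R\Gamma_{\crys}(A)/p \xrightarrow{\varphi_n - \mathrm{can}} \dR(A),
\]
and the goal is to match this, after a shift by $n$, with the fiber sequence in the statement. This is in effect the derived/animated incarnation of the classical Milne / Bloch--Kato--Kurihara short exact sequence $0 \to \F_p(n) \to Z_1 \Omega^n_X \xrightarrow{1-C} \Omega^n_X \to 0$ for smooth $\F_p$-schemes $X$.

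First, by quasisyntomic descent, I would reduce to the case where $A$ is quasiregular semiperfect. Both sides of the claimed fiber sequence are quasisyntomic sheaves in $A$ (the BMS side via item~(v) and the Cartier side via \cref{flatdescent} together with the fpqc sheaf property of $Z_1\LL\Omega^n$), so by \cite[Prop.~4.31]{BMS2} it suffices to verify the statement on quasiregular semiperfect algebras, where all relevant objects are discrete (by \cref{coollemma}, \cref{prop45}, and the description of $\mathbb A_{\crys}(A)$ and its Nygaard filtration in \cite[\S 8]{BMS2}).

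For such $A$, I would apply the octahedral axiom to the composite $\Fil^n_{\mathrm{Nyg}} \hookrightarrow \Fil^{n-1}_{\mathrm{Nyg}} \xrightarrow{p} \Fil^n_{\mathrm{Nyg}}$ (which equals multiplication by $p$ on $\Fil^n_{\mathrm{Nyg}}$), using items~(ii) and~(iii), to obtain a natural fiber sequence
\[
\Fil^{n-1}_{\conj}\dR(A) \to \Fil^n_{\mathrm{Nyg}} R\Gamma_{\crys}(A)/p \to \Fil^n_{\Hodge}\dR(A).
\]
This shows that the canonical map $\mathrm{can}\colon \Fil^n_{\mathrm{Nyg}}/p \to \dR(A)$ factors as $\Fil^n_{\mathrm{Nyg}}/p \twoheadrightarrow \Fil^n_{\Hodge}\dR(A) \hookrightarrow \dR(A)$. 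Independently, the classical fact that the divided Frobenius modulo $p$ lands in the conjugate filtration gives a factorization $\varphi_n\colon \Fil^n_{\mathrm{Nyg}}/p \to \Fil^n_{\conj}\dR(A) \hookrightarrow \dR(A)$. Projecting these factorizations to the $n$-th graded pieces $\gr^n_{\Hodge}\dR(A) \simeq \LL\Omega^n_A[-n]$ and $\gr^n_{\conj}\dR(A) \simeq \LL\Omega^n_{A^{(p)}}[-n] \simeq \LL\Omega^n_A[-n]$ (the latter via the derived Cartier isomorphism of \cite[Prop.~3.5]{Bha12}), and invoking the pullback description $Z_1\LL\Omega^n_A[-n] \simeq \Fil^n_{\conj}\dR(A) \times_{\dR(A)} \Fil^n_{\Hodge}\dR(A)$ from \cref{sunday}, I would build a natural map of fiber sequences from the BMS-style sequence to the proposed one. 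Under this matching, $\mathrm{can}$ corresponds to the inclusion $Z_1\LL\Omega^n_A \hookrightarrow \LL\Omega^n_A$ while $\varphi_n$ corresponds to the Cartier operator $C$ (the overall sign $\varphi_n - \mathrm{can}$ vs.\ $\mathrm{can} - C$ being immaterial for the fiber).

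The main obstacle will be the precise compatibility of $\varphi_n$ modulo $p$ with the Cartier operator $C$ under the derived Cartier isomorphism on $\gr^n_{\conj}\dR(A)$. This Frobenius--Cartier matching is classical for polynomial algebras, essentially as recorded in \cite{Luc1}, but extending it carefully to the animated setting requires unwinding the construction of the derived Cartier isomorphism; by left Kan extension from $\mathrm{Poly}_{\F_p}$, however, it is enough to verify the matching in the polynomial case, where it is well-known.
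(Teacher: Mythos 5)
Your approach is genuinely different from the paper's, which simply reduces by left Kan extension (``animation'') to the polynomial case and then invokes \cite[Prop.~8.21]{BMS2} to rewrite $\Z_p(n)(A)[n]$ as pro-\'etale cohomology of $W\Omega^n_{\log}$; reducing mod $p$, the classical exact sequence $0 \to \Omega^n_{\log} \to Z_1\Omega^n \xrightarrow{1-C} \Omega^n \to 0$ and the cited results of Illusie finish the job in one line. Your plan instead stays entirely inside the Nygaard/de Rham picture, which is conceptually appealing since it would manifestly match the two BMS-style filtrations. It also buys something: it would give a direct comparison between the Nygaard description and the pullback description of $Z_1\LL\Omega^n$ from \cref{sunday}, rather than detouring through the logarithmic de Rham--Witt sheaves.

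However, there are two real issues. First, the plan is internally inconsistent about which reduction it uses: you begin by reducing to quasiregular semiperfect rings by quasisyntomic descent, but then conclude by appealing to left Kan extension from $\mathrm{Poly}_{\F_p}$. These reductions are different, and each requires its own care (descent needs the fiber sequence to be stated between quasisyntomic sheaves; Kan extension needs one to justify that $\F_p(n)$ is left Kan extended from polynomials, and that the fiber of the Kan-extended map agrees with the Kan-extended fiber). You should pick one route and see it through.

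Second, and more substantively: the proposed construction of a map of fiber sequences has a gap. You factor $\mathrm{can}\colon\Fil^n_{\mathrm{Nyg}}/p \to \dR(A)$ through $\Fil^n_{\Hodge}\dR(A)$ and $\varphi_n$ through $\Fil^n_{\conj}\dR(A)$, and then say you will ``invoke the pullback description $Z_1\LL\Omega^n_A[-n] \simeq \Fil^n_{\conj}\dR(A) \times_{\dR(A)} \Fil^n_{\Hodge}\dR(A)$'' to build the map. But the pair $(\varphi_n, \mathrm{can})\colon\Fil^n_{\mathrm{Nyg}}/p \to \Fil^n_{\conj}\dR \times \Fil^n_{\Hodge}\dR$ does \emph{not} factor through the fiber product $\Fil^n_{\conj}\dR \times_{\dR}\Fil^n_{\Hodge}\dR$, precisely because the two composites to $\dR(A)$ are $\varphi_n$ and $\mathrm{can}$, which differ (their difference is exactly the map whose fiber you are trying to identify). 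So there is no natural map $\Fil^n_{\mathrm{Nyg}}/p \to Z_1\LL\Omega^n_A[-n]$ produced this way, and hence no map of fiber sequences as sketched. One can salvage the idea --- e.g., by observing that $\mathrm{fib}(\varphi_n - \mathrm{can})$ is the pullback of $\Fil^n_{\mathrm{Nyg}}/p$ along the diagonal $\dR \to \dR\times\dR$, using the two factorizations to rewrite this as a pullback over $\Fil^n_{\conj}\dR\times\Fil^n_{\Hodge}\dR$, and then comparing with the fiber of $\mathrm{can}-C$ on $Z_1\LL\Omega^n[-n]$ --- but that argument still needs to be supplied, and you would additionally need to justify the unproved factorization of $\varphi_n$ mod $p$ through $\Fil^n_{\conj}\dR(A)$ at the level of the whole $\Fil^n_{\mathrm{Nyg}}/p$ (not just on $\gr^n_{\mathrm{Nyg}}$, where it follows from item~(ii)). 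As written, the proposal does not close.
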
{}

\begin{proof}
By animation, it is enough to prove the claim when $A$ is a polynomial algebra. By \cite[Prop.~8.21]{BMS2} and quasisyntomic descent, it follows that $$\Z_p (n)(A)[n] \simeq R\Gamma_{\mathrm{pro\acute{e}t}}(\Spec A, W\Omega^n_{\Spec A, \mathrm{log}}),$$ where $W\Omega^n_{\Spec A, \mathrm{log}} := \varprojlim W_r\Omega^n_{\Spec A, \mathrm{log}}$ (see \cite[Prop.~8.4]{BMS2}). The claim now follows from going modulo $p$, and using \cite[2.1.20,~2.4.1.1,~Thm.~2.4.2,~Cor.~5.7.5]{Luc1}.
\end{proof}{}

\begin{remark}\label{BMSsimple}
   Let $R$ be a quasiregular semiperfect algebra. By \cite[Lem.~8.19]{BMS2}, $\Z_p(i) (R)$ is discrete for $i>0.$ This may also be seen by reducing modulo $p$ and using the sequence \cref{cartierbloch}. Further, $\Z_p (0)(R) \in D_{[-1,0]}(\Z_p)$ and $\Z_p (i) (R) = 0$ for $i<0.$ Using \cref{filtrationlemma}, we see that the filtration $\Fil^n\TC(R)$ constructed in \cite{BMS2} is simply given by $\tau_{\ge 2n-1} \TC(R).$ 
\end{remark}{}

Having discussed these basic properties, we now focus on the behavior of these invariants for the pro-system of truncated polynomial rings.

\begin{proposition}\label{ven}
    Let $R$ be a perfect ring. Then $\varprojlim \mathrm{dR}(R[t]/t^n) \simeq R[[t]]^{(p)} \oplus R[[t]]^{(p)}[-1]$ as an $R[[t]]^{(p)}$-module.
\end{proposition}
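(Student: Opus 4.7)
The plan is to compute the limit via the conjugate filtration on derived de Rham cohomology, reducing everything to an analysis of the cotangent complex of $R[t]/t^n$ and its derived wedge powers. Write $A_n = R[t]/t^n$ for brevity. The conjugate filtration on $\mathrm{dR}(A_n)$ has graded pieces $\mathrm{gr}^i_{\mathrm{conj}} \mathrm{dR}(A_n) \simeq \wedge^i \LL_{A_n^{(p)}/\F_p}[-i]$, and since $R$ is perfect, $\LL_{A_n/\F_p} \simeq \LL_{A_n/R}$. Using the pushout description $A_n = R[t]\otimes^L_{R[u]} R$ with $u\mapsto t^n$, the relative cotangent complex is the $2$-term complex $[A_n \xrightarrow{nt^{n-1}} A_n\cdot dt]$ placed in homological degrees $1$ and $0$, sitting in a cofiber sequence $A_n\cdot dt \to \LL_{A_n/R} \to A_n\cdot[t^n][1]$ with connecting map $[t^n]\mapsto nt^{n-1}\,dt$.

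The central step is to show that $\varprojlim_n \mathrm{gr}^i_{\mathrm{conj}}\mathrm{dR}(A_n)$ vanishes for $i\geq 2$ and to identify the remaining two graded pieces. The transition map induced by $R[t]/t^{n+1}\twoheadrightarrow R[t]/t^n$ acts on the conormal modules by $[t^{n+1}]\mapsto t\cdot[t^n]$, hence on divided powers by $[t^{n+1}]^{[k]}\mapsto t^k\cdot [t^n]^{[k]}$. This inverse system is Mittag-Leffler with pro-zero transition maps for any $k\geq 1$, so its $\varprojlim$ and $\varprojlim^1$ both vanish. Applying this to the characteristic-$p$ derived wedge formula for a fiber sequence $M\to L\to N[1]$ with $M=A_n\cdot dt$, $N=A_n\cdot[t^n]$ — which gives a filtration on $\wedge^i L$ with graded pieces $\wedge^a M\otimes \Gamma^b N[b]$ for $a+b=i$ — I note that rank-one-ness of $M$ forces $a\in\{0,1\}$, hence $b\geq i-1\geq 1$ for $i\geq 2$, so every graded piece vanishes in the limit. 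For $i=1$ the same vanishing applies to the conormal summand but leaves $\varprojlim_n (A_n\cdot dt) = R[[t]]\cdot dt$ (which is $R[[t]]^{(p)}$ as a rank-one Frobenius-twisted module); and for $i=0$ one trivially has $\varprojlim A_n^{(p)} = R[[t]]^{(p)}$.

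Thus the induced filtration on $\varprojlim_n\mathrm{dR}(A_n)$ has only two nonzero graded pieces, $\mathrm{gr}^0 \simeq R[[t]]^{(p)}$ and $\mathrm{gr}^1 \simeq R[[t]]^{(p)}[-1]$, so the inverse limit sits in a fiber sequence
\[
R[[t]]^{(p)} \to \varprojlim_n \mathrm{dR}(R[t]/t^n) \to R[[t]]^{(p)}[-1]
\]
in $D(R[[t]]^{(p)})$. This two-term complex is classified by a Yoneda class in some $\mathrm{Ext}^{\geq 1}_{R[[t]]^{(p)}}(R[[t]]^{(p)}, R[[t]]^{(p)})$, which vanishes because $R[[t]]^{(p)}$ is free of rank one over itself. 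Hence the extension splits and produces the desired identification.

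The main technical obstacle is justifying the derived-wedge filtration computation in characteristic $p$ cleanly, in particular that divided powers of the conormal govern the higher wedge powers and that these behave well in the pro-system, together with the commutation of $\varprojlim_n$ with the conjugate filtration (which is legitimate here because the filtration is exhaustive and only finitely many graded pieces survive in the limit, so no $\varprojlim^1$ issues obstruct passing between filtered pro-objects and the limiting object).
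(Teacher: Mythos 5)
Your overall strategy---compute the limit via the conjugate filtration, analyze $\LL_{A_n/R}$ with $A_n = R[t]/t^n$, and show the divided-power transition maps are pro-zero for $i\geq 2$---is the same strategy the paper uses, and several of your intermediate steps are correct: the description of $\LL_{A_n/R}$ via transitivity, the derived-wedge filtration with graded pieces $\wedge^a M\otimes\Gamma^b N[b]$, the rank-one observation forcing $a\le 1$, the computation that the transition on $\Gamma^k$ of the conormal is multiplication by a power of $t$ and hence eventually zero, and the final $\Ext$-vanishing splitting of the resulting two-step fiber sequence over $R[[t]]^{(p)}$.

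However, there is a genuine gap exactly at the point you flag at the end. Knowing that the pro-objects $\{\gr^i_{\conj}\mathrm{dR}(A_n)\}_n$ are pro-zero for each fixed $i\ge 2$ does not by itself give that the filtration $\{\varprojlim_n \Fil^i_{\conj}\}_i$ on $\varprojlim_n\mathrm{dR}(A_n)$ is exhaustive; one must show that $\varprojlim_n\bigl(\mathrm{dR}(A_n)/\Fil^1_{\conj}\bigr)$ vanishes. Since the conjugate filtration is exhaustive as a filtered colimit, $\mathrm{dR}(A_n)=\colim_i\Fil^i_{\conj}$, this is a limit/colimit interchange question, not a $\varprojlim^1$ question. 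The difficulty is that without additional input the transition map on $\mathrm{dR}(A_n)/\Fil^1_{\conj}$ is only \emph{filtered}---zero on each graded piece, hence strictly lower-triangular in the associated graded decomposition---so the number of composites needed to kill the contribution of $\gr^i$ grows with $i$; the quotient pro-object is not manifestly pro-zero, and its $\varprojlim$ is not manifestly zero. Your parenthetical ``only finitely many graded pieces survive in the limit, so no $\varprojlim^1$ issues obstruct'' does not address this.

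The paper closes this gap with two moves you should adopt. First, restrict to the cofinal subsystem $n=p^k$, where $d(t^n)=nt^{n-1}\,dt=0$, so $\LL_{A_n/R}\simeq I_n[1]\oplus A_n$ splits outright and $\wedge^s\LL_{A_n}[-s]\simeq\Gamma^s(I_n)\oplus\Gamma^{s-1}(I_n)[-1]$ without any further filtration argument. Second---and this is the key step missing from your write-up---invoke the fact that $R[t]/t^{p^k}$ lifts to $\Z/p^2$ together with a lift of Frobenius, compatibly in $k$, so by \cite[Prop.~3.17]{Bha12} the conjugate filtration on $\mathrm{dR}(A_n)$ splits functorially as $\bigoplus_s\gr^s_{\conj}$. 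With this splitting, the transition map on $\mathrm{dR}(A_{p^{k+1}})\to\mathrm{dR}(A_{p^k})$ is a genuine direct sum of maps on graded pieces, and since $I_{p^{k+1}}\to I_{p^k}$ is zero, the summands with $s\ge 1$ (beyond the two you want) die in a single step. This exhibits $\{\mathrm{dR}(A_n)\}$ as pro-isomorphic to $\{A_n^{(p)}\oplus A_n^{(p)}[-1]\}$, after which $\varprojlim$ is immediate and the $\Ext$-splitting step is unnecessary. Without the splitting lemma, your argument does not close.
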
{}

\begin{proof}
   Let $n = p^k.$ Then $\LL_{R[t]/t^n} \simeq (t^n/t^{2n})[1] \oplus R[t]/t^n$ as an $R[t]/t^n$-module. Let $I_n:= (t^n/t^{2n}),$ which is free of rank $1$ as an $R[t]/t^n$-module. For $s \ge 0,$ one has $$\wedge^s \LL_{R[t]/t^n}[-s] \simeq \Gamma^s(I_n) \oplus \Gamma^{s-1}(I_n)[-1].$$ Now, we note that since $R[t]/t^n$ is liftable to $\Z/p^2 \Z$, along with a lift of the Frobenius, the conjugate filtration on $\mathrm{dR}(R[t]/t^n)$ splits \cite[Prop.~3.17]{Bha12}; this gives
$$\mathrm{dR}(R[t]/t^n) \simeq \Gamma^* (I_n) \oplus \Gamma^* (I_n)[-1].$$ Finally, note that the natural map $R[t]/t^{p^{k+1}} \to R[t]/t^{p^{k}}$ induces the zero map $I_{p^{k+1}} \to I_{p^k}.$ This shows that the $\mathbf N$-indexed objects $\mathrm{dR}(R[t]/t^n)$ and $(R[t]/t^n)^{(p)} \oplus (R[t]/t^n)^{(p)}[-1]$ are isomorphic as pro-objects. This yields the desired claim.
\end{proof}{}

\begin{proposition}\label{pack}
    Let $R$ be a perfect ring. Then $Z_1 \LL \Omega^i_{R[t]/t^n} \simeq 0$ as a pro-object for $i \ge 2.$
\end{proposition}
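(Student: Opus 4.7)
My plan to prove \cref{pack} is as follows. Set $A_n := R[t]/t^n$. First, I will observe that $\LL\Omega^i_{A_n}$ and $\LL\Omega^i_{A_n^{(p)}}$ are pro-zero for $i \ge 2$. Since $R$ is perfect, we have $\LL_{A_n/\F_p} \simeq A_n \cdot dt \oplus I_n[1]$ where $I_n := (t^n)/(t^{2n})$, and, exactly as in the proof of \cref{ven}, the transition map $I_{n+1} \to I_n$ is zero, so $\Gamma^k(I_n)$ is pro-zero for every $k \ge 1$. The computation
$$\LL\Omega^i_{A_n} \simeq \wedge^i \LL_{A_n/\F_p} \simeq \Gamma^i(I_n)[i] \oplus \Gamma^{i-1}(I_n)[i-1]$$
(for $i \ge 1$) then shows $\LL\Omega^i_{A_n}$ is pro-zero for $i \ge 2$, and the same argument applies to $A_n^{(p)}$.

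Next, I will use the Cartier fiber sequences \cref{cartier3} and \cref{buyfan} (with $n = 1$) to produce a self-referential pro-equivalence. Fix $i \ge 2$. From \cref{buyfan}, the sequence $B_1 \LL\Omega^i \to Z_1 \LL\Omega^i \to \LL\Omega^i_{A^{(p)}}$ has pro-zero third term, yielding $B_1 \LL\Omega^i_{A_n} \simeq Z_1 \LL\Omega^i_{A_n}$ in the pro-category. From \cref{cartier3}, the sequence $Z_1 \LL\Omega^i \to \LL\Omega^i \to B_1 \LL\Omega^{i+1}$ has pro-zero middle term, yielding $Z_1 \LL\Omega^i_{A_n} \simeq B_1 \LL\Omega^{i+1}_{A_n}[-1] \simeq Z_1 \LL\Omega^{i+1}_{A_n}[-1]$ in pro. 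Iterating this $N$ times gives $Z_1 \LL\Omega^i_{A_n} \simeq Z_1 \LL\Omega^{i+N}_{A_n}[-N]$ in the pro-category for every $N \ge 0$.

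I will then conclude via a cohomological boundedness argument. By \cref{sunday}, $Z_1 \LL\Omega^j_{A_n}[-j]$ is the fiber of a map between $\Fil^j_{\conj}\dR(A_n)$ and $\dR(A_n)/\Fil^j_{\Hodge}\dR(A_n)$. The decomposition $\dR(A_n) \simeq \Gamma^*(I_n) \oplus \Gamma^*(I_n)[-1]$ from the proof of \cref{ven} places $\dR(A_n)$ in cohomological degrees $[0,1]$, and since the Hodge and conjugate graded pieces also lie there, iterated extensions show that $\Fil^j_{\conj}\dR(A_n)$ and $\dR(A_n)/\Fil^j_{\Hodge}\dR(A_n)$ both lie in cohomological degrees $[0,1]$. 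The fiber therefore lies in cohomological degrees $[0,2]$, so $Z_1 \LL\Omega^j_{A_n}$ sits in cohomological degrees $[j, j+2]$ at each level, and hence $Z_1 \LL\Omega^{i+N}_{A_n}[-N]$ sits in cohomological degrees $[i + 2N, i + 2N + 2]$. For $N \ge 2$ this range is disjoint from $[i, i+2]$, so the pro-equivalence forces each homotopy pro-group of $Z_1 \LL\Omega^i_{A_n}$ to vanish, giving $Z_1 \LL\Omega^i_{A_n} \simeq 0$ in the pro-category. The main subtlety I expect is tracking the cohomological ranges of the filtration pieces carefully through the iterated pro-equivalence.
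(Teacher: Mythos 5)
Your steps (1) and (2) are correct, and the ``self-referential'' pro-equivalence $Z_1 \LL\Omega^i_{A_n} \simeq Z_1 \LL\Omega^{i+N}_{A_n}[-N]$ is a genuinely different observation than what the paper uses. However, step (3) contains a sign error that is fatal to the argument. You write that $Z_1 \LL\Omega^j_{A_n}$ sits in cohomological degrees $[j,j+2]$ because $Z_1 \LL\Omega^j_{A_n}[-j]$ sits in $[0,2]$, and then that $Z_1 \LL\Omega^{i+N}_{A_n}[-N]$ sits in $[i+2N, i+2N+2]$. These two deductions apply the shift $[-j]$ (resp.\ $[-N]$) in opposite directions. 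Done consistently (say, homologically: $\pi_k(M[-j]) = \pi_{k+j}(M)$), one finds that $Z_1 \LL\Omega^j_{A_n}[-j] \in D_{[-2,0]}$ forces $Z_1 \LL\Omega^j_{A_n} \in D_{[j-2,\,j]}$, and hence $Z_1 \LL\Omega^{i+N}_{A_n}[-N] \in D_{[i-2,\,i]}$, which is exactly the range of $Z_1 \LL\Omega^i_{A_n}$ itself. The ranges coincide, as they must, since your pro-equivalence is degree-preserving by construction. No contradiction can arise from a uniform degree bound.

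More conceptually, the iteration $Z_1 \LL\Omega^i \simeq Z_1 \LL\Omega^{i+1}[-1]$ is perfectly consistent with $Z_1 \LL\Omega^j \simeq M[j]$ for a fixed nonzero $M$, so by itself it carries no vanishing information. What is missing is a statement that actually \emph{decays} as you move up in $j$. The paper supplies exactly this: it proves that the canonical map $\dR(R[t]/t^n) \to \widehat{\dR}(R[t]/t^n)$ is a pro-isomorphism, by identifying $\widehat{\dR}(R[t]/t^n)$ (pro-isomorphic to the classical de Rham complex since $\gr^{\ge 2}_{\Hodge}$ is pro-zero) with $\Omega^*_{R[t]} \otimes_{R[t]^{(p)}} (R[t]/t^n)^{(p)}$ via the Frobenius-compatible lift of $R[t]$ to $\Z/p^2$, and matching this with the conjugate-split description of $\dR(R[t]/t^n)$ from \cref{ven}. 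This pro-Hodge-completeness statement is what lets one conclude that $\dR \to \dR/\Fil^i_{\Hodge}$ is a pro-isomorphism for $i \ge 2$; combined with the exhaustiveness of the conjugate filtration one sees $\Fil^i_{\conj}\dR \to \dR/\Fil^i_{\Hodge}$ is a pro-isomorphism, and \cref{sunday} gives the result. Your argument never establishes anything about $\Fil^i_{\Hodge}\dR(A_n)$ itself being pro-zero (only its graded pieces), which is precisely the gap; derived de Rham cohomology is not Hodge-complete in general, and some version of the crystalline comparison is needed to control it here.
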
{}

\begin{proof}
    First we prove that the natural map $\mathrm{dR}(R[t]/t^n) \to \widehat{\mathrm{dR}}(R[t]/t^n)$ is a pro-isomorphism. To this end, note that the pro-object $ \widehat{\mathrm{dR}}(R[t]/t^n)$ admits a complete descending filtration (induced by the Hodge filtration) $\mathrm{Fil}^*_{\mathrm{Hodge}}\widehat{\mathrm{dR}}(R[t]/t^n)$ whose graded pieces are described as 
$$ \mathrm{gr}^0 \widehat{\mathrm{dR}}(R[t]/t^n) \simeq R[t]/t^n,\,\, \mathrm{gr}^1 \widehat{\mathrm{dR}}(R[t]/t^n) \simeq \Omega^1_{R[t]/t^n}[-1],\,\, \mathrm{gr}^i\widehat{\mathrm{dR}}(R[t]/t^n) \simeq 0\, \text{for}\, i >1$$ as pro-objects. This implies that the pro-object $\widehat{\mathrm{dR}}(R[t]/t^n)$ is pro-isomorphic to $\Omega^*_{R[t]/t^n},$ where the latter denotes the classical de Rham complex. Now let $n= p^k.$ Then $\Omega^*_{R[t]/t^n},$ as an $(R[t]/ t^n)^{(p)}$-module is naturally isomorphic to $$\Omega^*_{R[t]} \otimes_{R[t]^{(p)}} (R[t]/ t^n)^{(p)}.$$ Considering that $R[t]$ lifts to $\Z/p^2 \Z$ along with a lift of the Frobenius, we see that the pro-object $\widehat{\mathrm{dR}}(R[t]/t^n) \simeq(R[t]/t^n)^{(p)} \oplus (R[t]/t^n)^{(p)}[-1] \simeq \mathrm{dR}(R[t]/t^n),$ where the latter isomorphism follows from the proof of \cref{ven}.

Since for $i \ge 2,$ we have $\widehat{\mathrm{dR}}(R[t]/t^n) \simeq \widehat{\mathrm{dR}}(R[t]/t^n)/ \Fil^i_{\mathrm{Hodge}},$ and the natural map $\mathrm{dR}(R[t]/t^n) \to \widehat{\mathrm{dR}}(R[t]/t^n)$ is a pro-isomorphism, it now follows that the natural map $$\mathrm{dR}(R[t]/t^n) \to \mathrm{dR}(R[t]/t^n)/ \Fil^i_{\mathrm{Hodge}}$$ is a pro-isomorphism. Further, note that the natural map $$\mathrm{Fil}^j_{\mathrm{conj}} \mathrm{dR}(R[t]/t^n) \to \mathrm{dR}(R[t]/t^n)$$ is a pro-isomorphism for $j \ge 1.$ Therefore, the natural map $$\mathrm{Fil}^i_{\mathrm{conj}} \mathrm{dR}(R[t]/t^n) \to \mathrm{dR}(R[t]/t^n)/ \Fil^i_{\mathrm{Hodge}}$$ is a pro-isomorphism for $i \ge 2.$ Thus the fiber, which is naturally isomorphic to $Z_1\LL \Omega^i_{R[t]/t^n}[-i],$ is pro-zero for $i \ge 2$.
\end{proof}{}

\begin{proposition}
    Let $R$ be a perfect ring. Then $\varprojlim \Z_p(i) (R[t]/t^n) = 0$ for $i>1.$
\end{proposition}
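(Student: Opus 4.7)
The plan is to reduce modulo $p$, invoke the fiber sequence \cref{cartierbloch}, and exploit the pro-vanishing results established in the proof of \cref{pack}. Since each $\Z_p(i)(R[t]/t^n)$ is naturally derived $p$-complete and the subcategory of derived $p$-complete objects is closed under limits, the inverse limit $\varprojlim_n \Z_p(i)(R[t]/t^n)$ is derived $p$-complete. It thus suffices to check that multiplication by $p$ is invertible on this limit, and applying $\varprojlim_n$ to the mod-$p$ fiber sequence $\Z_p(i) \xrightarrow{p} \Z_p(i) \to \F_p(i)$, this reduces to showing
$$\varprojlim_n \F_p(i)(R[t]/t^n) = 0 \qquad \text{for } i > 1.$$

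To establish this, apply \cref{cartierbloch} to $A = R[t]/t^n$ to obtain a fiber sequence of pro-objects
$$\F_p(i)(R[t]/t^n)[i] \to Z_1 \L\Omega^i_{R[t]/t^n} \xrightarrow{\mathrm{can}-C} \L\Omega^i_{R[t]/t^n}.$$
By \cref{pack}, the middle term vanishes as a pro-object for $i \ge 2$. For the right-hand term, the argument inside the proof of \cref{pack} shows that $\widehat{\mathrm{dR}}(R[t]/t^n)$ is pro-isomorphic to the classical de Rham complex $\Omega^*_{R[t]/t^n}$, which has no contribution in degrees $\ge 2$ because $R$ is perfect and hence $\Omega^j_{R[t]/t^n/\F_p} = 0$ for $j \ge 2$. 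Consequently the Hodge graded pieces $\mathrm{gr}^i_{\Hodge} \widehat{\mathrm{dR}}(R[t]/t^n) \simeq \L\Omega^i_{R[t]/t^n}[-i]$ pro-vanish for $i \ge 2$, so $\L\Omega^i_{R[t]/t^n}$ itself is pro-zero for $i \ge 2$.

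Since both the middle and right-hand terms of the fiber sequence are pro-zero and the category of pro-objects in $D(\Z_p)$ is stable, the fiber $\F_p(i)(R[t]/t^n)[i]$ is likewise pro-zero for $i \ge 2$. Passing to the inverse limit yields $\varprojlim_n \F_p(i)(R[t]/t^n) = 0$, which combined with the derived $p$-completeness argument above completes the proof. The technical heart of the argument is the pro-vanishing of $Z_1\L\Omega^i_{R[t]/t^n}$, which is the substance of \cref{pack}; once one recognizes that the higher Hodge graded pieces also pro-vanish for formal reasons (namely that $\Omega^j_{R[t]/t^n/\F_p} = 0$ for $j \ge 2$ when $R$ is perfect), the remainder is a matter of feeding these inputs into the Cartier/Bloch fiber sequence and tracking derived $p$-completeness through the inverse limit.
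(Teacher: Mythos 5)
Your proposal follows essentially the same route as the paper: reduce to $\F_p(i)$ by derived $p$-completeness, feed the fiber sequence \cref{cartierbloch} into the pro-category, and conclude from pro-vanishing of $Z_1\L\Omega^i_{R[t]/t^n}$ and $\L\Omega^i_{R[t]/t^n}$ for $i\ge 2$. One step is stated loosely: from the pro-isomorphism of the \emph{unfiltered} objects $\widehat{\dR}(R[t]/t^n)\simeq \Omega^*_{R[t]/t^n}$ together with the vanishing of classical $\Omega^j$ for $j\ge 2$, you cannot directly deduce that the Hodge graded pieces $\gr^i_{\Hodge}\widehat{\dR}(R[t]/t^n)$ pro-vanish (a pro-iso of filtered objects is needed, not just of the totals). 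The correct and more direct justification, which is exactly what the proof of \cref{pack} supplies, is to compute $\gr^i_{\Hodge}\widehat{\dR}(R[t]/t^n)\simeq \wedge^i\L_{R[t]/t^n}[-i]$ at $n=p^k$ as $\Gamma^i(I_n)\oplus\Gamma^{i-1}(I_n)[-1]$ and use that the transition map $I_{p^{k+1}}\to I_{p^k}$ is zero, forcing pro-vanishing for $i\ge 2$; with that substitution your argument agrees with the paper's.
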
{}

\begin{proof}
    By derived $p$-completeness, it is enough to prove that  $\varprojlim \F_p(i) (R[t]/t^n) = 0$ for $i>1.$ Now the fiber sequence (see \cref{cartierbloch}) $$ \F_p(i)(R[t]/t^n)[i] \to Z_1 \LL \Omega^i_{R[t]/t^n} \xrightarrow[]{\mathrm{can} - C} \LL\Omega^i_{R[t]/t^n}$$ yields the desired vanishing since $Z_1 \LL \Omega^i_{R[t]/t^n}$ and $\LL\Omega^i_{R[t]/t^n}$ are both pro-zero for $i>1.$
\end{proof}{}

Now we focus our attention to $\varprojlim \Z_p(i) (R[t]/t^n),$ where $R$ is a quasiregular semiperfect algebra. For this purpose, it will be convenient to work with Nygaard filtration on derived crystalline cohomology.

\begin{lemma}
     Let $R$ be a quasiregular semiperfect ring. Then $$ \varprojlim \Fil^{i}_{\conj}\mathrm{dR} (R[t]/t^n)\in D_{[-1,0]} (\Z_p).$$
 
\end{lemma}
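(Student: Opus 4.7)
The plan is to induct on $i$, using the fiber sequence
$$\Fil^{i-1}_{\conj} \dR(R[t]/t^n) \to \Fil^{i}_{\conj} \dR(R[t]/t^n) \to \mathrm{gr}^i_{\conj} \dR(R[t]/t^n)$$
and the fact that $\varprojlim_n$ preserves fiber sequences. The base case reduces to $\Fil^0_{\conj} \dR(R[t]/t^n) \simeq (R[t]/t^n)^{(p)}$, which is discrete with surjective transition maps, so $\varprojlim \simeq R[[t]]^{(p)} \in D_{[0,0]}$. Hence it suffices to show that $\varprojlim_n \mathrm{gr}^i_{\conj} \dR(R[t]/t^n) \in D_{[-1,0]}(\Z_p)$ for each $i \ge 1$.

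Writing $A \coloneqq R[t]/t^n = R \otimes_{\F_p} \F_p[t]/t^n$, transitivity of the cotangent complex for tensor products yields
$$\L_{A/\F_p} \simeq (\L_{R/\F_p} \otimes_R A) \;\oplus\; (\L_{\F_p[t]/t^n/\F_p} \otimes_{\F_p[t]/t^n} A).$$
Setting $M \coloneqq \L_{R/\F_p}[-1]$, which is a flat $R$-module (since $R$ is quasiregular semiperfect), the first summand becomes $(M \otimes_R A)[1]$. For the second summand, the proof of \cref{ven} already shows that along the cofinal pro-subsystem $n = p^k$, the ``torsion'' piece $\L_{\F_p[t]/t^n/\F_p[t]} \simeq (t^n)/(t^{2n})[1]$ has vanishing transition maps and is therefore pro-zero. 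Hence, as a pro-object,
$$\L_{A/\F_p} \simeq A \cdot dt \;\oplus\; (M \otimes_R A)[1].$$

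The Quillen--Illusie identity $\wedge^b(N[1]) \simeq \Gamma^b(N)[b]$ for $N$ flat in degree $0$, combined with the vanishing $\wedge^a(A \cdot dt) = 0$ for $a \ge 2$ (as $A \cdot dt$ is free of rank $1$), now gives
$$\wedge^i \L_{A/\F_p} \simeq \Gamma^i(M \otimes_R A)[i] \;\oplus\; \bigl(A \cdot dt \otimes_A \Gamma^{i-1}(M \otimes_R A)\bigr)[i-1].$$
Shifting by $[-i]$ (and observing that passing from $A$ to $A^{(p)}$ does not affect homological degrees), $\mathrm{gr}^i_{\conj} \dR(A) = \wedge^i \L_{A^{(p)}/\F_p}[-i]$ lies, as a pro-object, in homological degrees $\{0, -1\}$, with both $\pi_0$ and $\pi_{-1}$ discrete modules of the displayed form. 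The transition maps $R[t]/t^{n+1} \twoheadrightarrow R[t]/t^n$ are surjective, and tensor products and divided powers preserve surjectivity; hence both pro-systems are Mittag--Leffler, $\mathrm{lim}^1$ vanishes, and $\varprojlim_n \mathrm{gr}^i_{\conj} \dR(R[t]/t^n) \in D_{[-1,0]}(\Z_p)$, closing the induction.

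The main obstacle is the pro-object simplification of $\L_{\F_p[t]/t^n/\F_p}$: the would-be degree-$1$ contribution only disappears after passing to the pro-system, which is why the statement is genuinely about the inverse limit rather than a bound at each $n$.
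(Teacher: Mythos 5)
Your proof is correct and reaches the same conclusion by a closely related but slightly different route. The paper first identifies the whole filtered pro-object $\Fil^*_{\conj}\dR(R[t]/t^n)$ via the Day convolution formula from \cref{convv}\textup{(3)}, producing a pro-isomorphism with $\bigl(\Fil^*_{\conj}\dR(R)\oplus\Fil^{*-1}_{\conj}\dR(R)[-1]\bigr)\otimes_{\F_p}(\F_p[t]/t^n)^{(p)}$ and then checking this on graded pieces, whereas you bypass the filtered Künneth formalism altogether: you reduce directly to the graded pieces by induction on $i$, invoking closure of $D_{[-1,0]}(\Z_p)$ under extensions, and then handle $\gr^i_{\conj}\dR$ by applying the Künneth decomposition of the cotangent complex $\L_{(R[t]/t^n)/\F_p}$, the pro-vanishing of the torsion summand (as in \cref{ven}), flatness of $M=\L_{R/\F_p}[-1]$, and the Quillen--Illusie identity $\wedge^b(N[1])\simeq\Gamma^b(N)[b]$. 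The key technical inputs are therefore the same, but your argument is marginally more elementary in that it never needs \cref{convv} for the conjugate filtration (only the underlying Künneth for cotangent complexes). One small presentational note: the induction step requires only that $D_{[-1,0]}$ is stable under extensions and that $\varprojlim_n$ commutes with the finite fiber sequence defining $\gr^i$, both of which you use correctly; you should also state explicitly that you pass to the cofinal subsystem $n=p^k$ before making the Mittag--Leffler/$\lim^1$ argument, since the pro-identification of $\L_{(\F_p[t]/t^n)/\F_p}$ with $A\cdot dt$ is established there.
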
{}

\begin{proof}
    Note that $\mathrm{dR} (R[t]/t^n) \simeq \mathrm{dR}(R) \otimes_{\F_p} \mathrm{dR} (\F_p [t]/t^n).$ By the proof of \cref{ven}, $\Fil^0_{\conj} \mathrm{dR}(\F_p [t]/t^n) \simeq  (\F_p [t]/t^n)^{(p)}$ and $\Fil^i_{\conj} \mathrm{dR}(\F_p [t]/t^n) \simeq  (\F_p [t]/t^n)^{(p)} \oplus (\F_p [t]/t^n)^{(p)}[-1]$ as $n$-indexed pro-objects for each $i \ge 1$. For fixed $i,n$, we have
$$\Fil^i_{\conj}\mathrm{dR}(R[t]/t^n) \simeq \colim_{u+v \le i} \Fil^u_{\conj} \mathrm{dR}(R) \otimes_{\F_p}\Fil^v_{\conj}   \mathrm{dR}(\F_p [t]/t^n).$$
The above formula gives a natural map
\begin{equation}\label{kt}
    \left( \mathrm{Fil}^*_{\conj} \dR(R) \oplus \mathrm{Fil}^{*-1}_{\conj} \dR(R) [-1] \right) \otimes_{\F_p} (\F_p[t] /t^n)^{(p)} \to \Fil^*_{\conj}\mathrm{dR}(R[t]/t^n)
\end{equation}
To prove that this induces an isomorphism in the category of pro-objects, it is enough to prove that the graded pieces are pro-isomorphic. To this end, let $I:= \mathrm{Ker}(R^\flat \to R);$ then we have $\L_{R/ \F_p} \simeq I/I^2 [1].$ One computes that in the pro-category, we have $$\L_{(R[t]/t^n)/\F_p} \simeq (I/I^2 \otimes_R R[t]/t^n)[1] \oplus R[t]/t^n.$$ Computing wedge powers, we see that \cref{kt} is indeed an isomorphism. Since $\wedge^i \LL_{R/\F_p}[-i] \simeq \Gamma ^i_R (I/I^2)$ is discrete, $\Fil^*_{\conj} \dR(R)$ is also discrete. The maps induced on $\pi_{-1}$ on the left hand side of \cref{kt} are surjections and therefore the proposition follows. 
\end{proof}{}

\begin{lemma}\label{ktt}
    Let $R$ be a quasiregular semiperfect ring. Then $$ \varprojlim\Fil_{\mathrm{Nyg}}^{i} \widehat{R\Gamma}_{\mathrm{crys}}(R[t]/t^n) \in D_{[-1,0]} (\Z_p).$$
\end{lemma}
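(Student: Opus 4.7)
The plan is to combine the preceding lemma, which controls the conjugate-filtration graded pieces of derived de Rham cohomology, with the completeness of the Nygaard filtration on $\widehat{R\Gamma}_{\crys}$. By construction,
\[
\Fil^i_{\mathrm{Nyg}}\widehat{R\Gamma}_{\crys}(A) \simeq \varprojlim_{j}\bigl(\Fil^i_{\mathrm{Nyg}}/\Fil^{i+j}_{\mathrm{Nyg}}\bigr)\widehat{R\Gamma}_{\crys}(A),
\]
so commuting the two limits reduces the claim to showing $\varprojlim_{j} Z_{j} \in D_{[-1,0]}$, where $Z_{j} \coloneqq \varprojlim_{n}(\Fil^i_{\mathrm{Nyg}}/\Fil^{i+j}_{\mathrm{Nyg}})(R[t]/t^{n})$. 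I would handle this in two stages: first bound each $Z_{j}$ individually, then bound $\varprojlim_{j} Z_{j}$.

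For each $j \ge 0$, I would prove $Z_{j} \in D_{[-1,0]}(\Z_{p})$ by induction on $j$. The case $j=0$ is vacuous. For the inductive step, the surjection $\Fil^i_{\mathrm{Nyg}}/\Fil^{i+j+1}_{\mathrm{Nyg}} \to \Fil^i_{\mathrm{Nyg}}/\Fil^{i+j}_{\mathrm{Nyg}}$ has fiber $\gr^{i+j}_{\mathrm{Nyg}} \simeq \Fil^{i+j}_{\conj}\dR$, and $\varprojlim_{n}$ preserves fiber sequences, yielding a fiber sequence $\varprojlim_{n}\Fil^{i+j}_{\conj}\dR(R[t]/t^{n}) \to Z_{j+1} \to Z_{j}$. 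The leftmost term lies in $D_{[-1,0]}$ by the preceding lemma, and a quick long-exact-sequence argument shows $D_{[-1,0]}$ is closed under fiber sequences (vanishing of $\pi_{k}$ outside $\{-1,0\}$ on the two outer terms propagates to the middle). Hence $Z_{j+1} \in D_{[-1,0]}$.

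Finally, to control $\varprojlim_{j} Z_{j}$, I would invoke the Milnor short exact sequence. Since every $Z_{j}$ is in $D_{[-1,0]}$, the only homotopy group of the limit that could fall outside this range is $\pi_{-2}(\varprojlim_{j} Z_{j}) \cong \varprojlim^{1}_{j}\pi_{-1}(Z_{j})$. To kill this, verify the Mittag--Leffler condition on $\pi_{-1}(Z_{j})$ by chasing the long exact sequence attached to $\varprojlim_{n}\gr^{i+j}_{\mathrm{Nyg}}(R[t]/t^{n}) \to Z_{j+1} \to Z_{j}$: since the fiber lies in $D_{[-1,0]}$, its $\pi_{-2}$ vanishes, forcing the transition $\pi_{-1}(Z_{j+1}) \twoheadrightarrow \pi_{-1}(Z_{j})$ to be surjective. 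Thus $\varprojlim^{1}_{j}\pi_{-1}(Z_{j}) = 0$, and we conclude $\varprojlim_{j} Z_{j} \in D_{[-1,0]}$.

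The main obstacle is precisely this last passage to the limit in $j$: a priori the inverse limit of objects in $D_{[-1,0]}$ lives only in $D_{[-2,0]}$, and avoiding the extra degree of slack requires the sharpness of the graded-piece bound from the preceding lemma, which I use twice --- once to bound each $Z_{j}$, and again to force the Mittag--Leffler surjectivity.
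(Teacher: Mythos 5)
Your proof is correct and takes essentially the same route as the paper's: the paper filters $\varprojlim_n \Fil^i_{\mathrm{Nyg}}\widehat{R\Gamma}_{\crys}(R[t]/t^n)$ by the (complete) Nygaard filtration, identifies the graded pieces with $\varprojlim_n \Fil^{i+*}_{\conj}\dR(R[t]/t^n)$, invokes the preceding lemma, and concludes in one sentence. Your careful isolation of the $\varprojlim^1_j\pi_{-1}$ term and the Mittag--Leffler surjectivity forced by $\pi_{-2}$-vanishing of the graded pieces is exactly the content hidden behind the paper's phrase ``the claim follows.''
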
{}

\begin{proof}
    The left hand side is equipped with a complete descending filtration $\varprojlim\Fil_{\mathrm{Nyg}}^{i+*} \widehat{R\Gamma}_{\mathrm{crys}}(R[t]/t^n)$, where the graded pieces are computed by $\varprojlim \Fil^{i+*}_{\conj}\mathrm{dR} (R[t]/t^n).$ Therefore, the claim follows from the above lemma.
\end{proof}{}

\begin{lemma}\label{fight}
    Let $R$ be a quasiregular semiperfect ring. Then $$\varprojlim \widehat{\dR}(R[t]/t^n) \simeq \varprojlim \left(\widehat{\dR}(R) \otimes_{\F_p} (\F_p[t] /t^n)^{(p)} \oplus \widehat{\dR}(R) \otimes_{\F_p} (\F_p[t] /t^n)^{(p)}[-1]\right). $$
\end{lemma}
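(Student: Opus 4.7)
The approach is to apply the Künneth formula for Hodge-completed derived de Rham cohomology to the decomposition $R[t]/t^n \simeq R \otimes_{\F_p} \F_p[t]/t^n$, thereby reducing to the already-understood case of $\F_p[t]/t^n$. Concretely, the remark following \cref{convv} supplies a natural isomorphism in the completed filtered derived category,
$$\Fil^*_{\Hodge}\widehat{\dR}(R[t]/t^n) \simeq \Fil^*_{\Hodge}\widehat{\dR}(R) \,\hat{\otimes}\, \Fil^*_{\Hodge}\widehat{\dR}(\F_p[t]/t^n).$$
I would then specialize the argument of \cref{pack} to the perfect ring $\F_p$, which gives both a pro-isomorphism $\widehat{\dR}(\F_p[t]/t^n) \simeq (\F_p[t]/t^n)^{(p)} \oplus (\F_p[t]/t^n)^{(p)}[-1]$ (obtained by combining the conjugate-filtration splitting of \cref{ven}, available since $\F_p[t]$ lifts to $\Z/p^2\Z$ with a Frobenius lift, with the identification $\widehat{\dR} \simeq \dR$ in the pro-category) and, crucially, the pro-vanishing of $\Fil^i_{\Hodge}\widehat{\dR}(\F_p[t]/t^n)$ for every $i \geq 2$.

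Given this pro-boundedness of the Hodge filtration on the $\F_p[t]/t^n$-factor, the completed Day convolution $\hat{\otimes}$ simplifies: since the filtration on one tensorand is pro-concentrated in at most two levels, completion does not alter the object, and the underlying object of the completed tensor product coincides, pro-wise, with the ordinary tensor product over $\F_p$. Substituting the explicit decomposition from the previous step into this simplification yields the pro-isomorphism
$$\widehat{\dR}(R[t]/t^n) \simeq \widehat{\dR}(R) \otimes_{\F_p} (\F_p[t]/t^n)^{(p)} \oplus \widehat{\dR}(R) \otimes_{\F_p} (\F_p[t]/t^n)^{(p)}[-1],$$
and taking $\varprojlim_n$ then gives the lemma. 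The main technical hurdle is to justify that the Hodge-direction completion and the pro-limit over $n$ commute in this setting; this is precisely what the pro-vanishing of $\Fil^{\geq 2}_{\Hodge}\widehat{\dR}(\F_p[t]/t^n)$ provides, and once it is set up carefully, no further analysis of the completed Day convolution is required.
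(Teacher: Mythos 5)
Your proposal is correct and follows essentially the same route as the paper: invoke the Künneth formula for Hodge-completed derived de Rham cohomology in the completed Day-convolution form (the remark after \cref{convv}), use \cref{pack} to see that both $\widehat{\dR}(\F_p[t]/t^n)$ decomposes and $\Fil^{\geq 2}_{\Hodge}\widehat{\dR}(\F_p[t]/t^n)$ is pro-zero, and conclude that the Hodge-completion is pro-superfluous so the completed tensor product may be replaced by the ordinary one before taking $\varprojlim_n$. The paper's own proof is slightly more telegraphic about why the completion can be dropped, and you are right to flag that the real content is the completeness (pro-wise) of the Day convolution of a complete filtration with a pro-bounded one; but this is exactly the point the paper is also implicitly relying on, so there is no divergence in method.
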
{}

\begin{proof}
It follows that $\widehat{\dR}(R[t]/t^n)$ may be computed by completing $\widehat{\dR}(R) \otimes_{\F_p} \widehat{\dR} (\F_p [t]/t^n)$ with respect to the Day convolution filtration induced from the Hodge filtration $\Fil^*_{\Hodge}\widehat{\dR}(R)$ and $\Fil^*_{\Hodge} \widehat{\dR}(\mathbf F_p[t]/t^n).$ However, as an $\mathbf N$-indexed pro-object, $\Fil^i_{\Hodge} \widehat{\dR}(\mathbf{F}_p[t]/t^n) = 0$ for $i \ge 2$ (see \cref{pack}); therefore, one may ignore the completion step in order to compute the inverse limit, i.e., $\varprojlim \widehat{\dR}(R[t]/t^n) \simeq \varprojlim \left(\widehat{\dR}(R) \otimes_{\F_p} \widehat{\dR} (\F_p [t]/t^n)\right).$ This yields the desired statement.
\end{proof}{}

\begin{proposition}\label{LY}
    Let $R$ be a quasiregular semiperfect ring. Then $$ \varprojlim \Z_p(i)(R[t]/t^n) \in D_{[-1,0]} (\Z_p).$$
\end{proposition}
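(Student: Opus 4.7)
The plan is to apply the defining fiber sequence
$$\Z_p(i)(A)\to\Fil^i_{\mathrm{Nyg}}\widehat{R\Gamma}_\crys(A)\xrightarrow{\varphi_i-\mathrm{can}}\widehat{R\Gamma}_\crys(A)$$
with $A=R[t]/t^n$ and pass to $\varprojlim_n$, which preserves fiber sequences. By \cref{ktt} applied with index $i$ and separately with index $0$ (using $\Fil^0_{\mathrm{Nyg}}\widehat{R\Gamma}_\crys=\widehat{R\Gamma}_\crys$), the middle and right-hand terms both lie in $D_{[-1,0]}(\Z_p)$. Consequently, $\varprojlim_n\Z_p(i)(R[t]/t^n)$ lands automatically in $D_{[-2,0]}(\Z_p)$, and the proposition reduces to showing that $\varphi_i-\mathrm{can}$ is surjective on $\pi_{-1}$ after passing to $\varprojlim_n$, so that $\pi_{-2}=0$.

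To do this, I split each term using the unit and augmentation $R\to R[t]/t^n\to R$: each object in the fiber sequence decomposes as a direct sum of its value on $R$ and a reduced summand (the fiber of the augmentation). On the $R$-summand the fiber becomes $\Z_p(i)(R)$, which already lies in $D_{[-1,0]}(\Z_p)$ by \cite[Lem.~8.19]{BMS2} (see \cref{BMSsimple}), giving surjectivity there. For the reduced summand, I would use a K\"unneth-style decomposition coming from $R[t]/t^n\simeq R\otimes_{\F_p}\F_p[t]/t^n$ and the Day-convolution monoidality of \cref{convv}, upgraded to the Nygaard-completed crystalline setting. The key computational input is that in the pro-system the Hodge filtration $\Fil^i_{\Hodge}\widehat{\dR}(\F_p[t]/t^n)$ is pro-zero for $i\ge 2$, following the arguments of \cref{pack} and \cref{fight}, so the Hodge completion drops out of the $\varprojlim_n$; the reduced parts of $\varprojlim_n\Fil^i_{\mathrm{Nyg}}\widehat{R\Gamma}_\crys(R[t]/t^n)$ and $\varprojlim_n\widehat{R\Gamma}_\crys(R[t]/t^n)$ then become honest tensor products of the corresponding objects at $R$ with $\varprojlim_n(\F_p[t]/t^n)^{(p)}$, with an extra $[-1]$-shift contribution from the conjugate splitting of $\dR(\F_p[t]/t^n)$ as in \cref{ven}.

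Under this identification, the divided Frobenius $\varphi_i$ acts on the reduced tensor factor through the Frobenius $t\mapsto t^p$ on $\F_p[t]/t^n$, which is pro-zero; hence $\varphi_i$ vanishes in the pro-limit on the reduced summand. The map $\varphi_i-\mathrm{can}$ therefore reduces to $-\mathrm{can}$ there, and surjectivity on $\pi_{-1}$ follows by direct inspection from the tensor decomposition together with the $R$-case already treated. The hardest step will be carrying out the K\"unneth-style decomposition at the level of the Nygaard filtration on $\widehat{R\Gamma}_\crys$ rather than just on $\widehat{\dR}$, since the Nygaard filtration interacts nontrivially with both the Frobenius twist and the Day convolution; once that is set up, computing $\varphi_i-\mathrm{can}$ on $\pi_{-1}$ is a direct unwinding.
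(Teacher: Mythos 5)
Your overall skeleton is right — pass to $\varprojlim_n$ in the Nygaard fiber sequence, use \cref{ktt} on the middle and right terms to get $\varprojlim_n\Z_p(i)(R[t]/t^n)\in D_{[-2,0]}$, and reduce to surjectivity of $\varphi_i-\mathrm{can}$ on $\pi_{-1}$. That is exactly how the paper begins (after a $\bmod\,p$ reduction). But the mechanism you propose for the surjectivity is incorrect.

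Your key claim is that on the reduced summand, the divided Frobenius $\varphi_i$ ``acts through $t\mapsto t^p$ which is pro-zero,'' so $\varphi_i-\mathrm{can}$ degenerates to $-\mathrm{can}$. This is false. The Frobenius $t\mapsto t^p$ on the augmentation ideal of the pro-system $\{\F_p[t]/t^n\}_n$ is \emph{not} pro-zero: for the composite $(t)/(t^m)\xrightarrow{\phi}(t)/(t^m)\to(t)/(t^n)$ to vanish one would need $p\ge n$, and this fails for all $n>p$. (Likewise $\varprojlim\phi$ on $\F_p[[s]]$ is $s\mapsto s^p\neq 0$.) The divided Frobenius $\varphi_i=\phi/p^i$ on $\Fil^i_{\mathrm{Nyg}}$ is precisely calibrated so that the division by $p^i$ compensates the $p$-divisibility of $\phi$; it does not simply vanish. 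And indeed, the paper's proof shows that the Frobenius contribution is genuinely needed: precomposing $\varphi_i-\iota$ with $p\colon\Fil^{i-1}_{\mathrm{Nyg}}\to\Fil^i_{\mathrm{Nyg}}$ isolates $\varphi_{i-1}$, which factors through $\gr^{i-1}_{\mathrm{Nyg}}\simeq\Fil^{i-1}_{\conj}\dR$, producing on $\pi_{-1}(\varprojlim\widehat{\dR}(R[t]/t^n))\simeq\widehat{\dR}(R)[[s]]$ the subgroup built from $\Fil^{i-2}_{\conj}\dR(R)$. The canonical map contributes only the subgroup built from $\Fil^i_{\Hodge}\widehat{\dR}(R)$, and the punchline is the identity $\Fil^{i-2}_{\conj}\dR(R)+\Fil^i_{\Hodge}\widehat{\dR}(R)=\widehat{\dR}(R)$ for $i>1$. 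Neither piece alone is surjective, so an argument that makes $\varphi_i$ disappear cannot be repaired by inspection.

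Two further gaps. First, your argument as stated applies uniformly in $i$, but the final surjectivity step in the paper only works for $i>1$; the cases $i=0$ (Artin--Schreier) and $i=1$ ($R\Gamma_{\et}(\blank,\GG_m)^{\wedge_p}$ and Witt vectors) need their own arguments and your plan silently skips them. Second, you acknowledge that you would need a K\"unneth decomposition of $\Fil^*_{\mathrm{Nyg}}\widehat{R\Gamma}_\crys$ compatible with Day convolution and the Frobenius; this is genuinely subtle and not established in the paper. The paper deliberately avoids it by reducing mod $p$ right away and working with the Hodge and conjugate filtrations on $\widehat{\dR}$, where the K\"unneth statement is exactly \cref{convv} and already proved. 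You should follow that route: your instinct to use the unit/augmentation splitting is fine, but the surjectivity on $\pi_{-1}$ must be argued by combining the Hodge contribution (via $\Fil^{i+1}_{\mathrm{Nyg}}\to\Fil^{i+1}_{\Hodge}\widehat{\dR}$) with the conjugate contribution (via $\varphi_{i-1}$ and $\gr^{i-1}_{\mathrm{Nyg}}$), not by discarding the Frobenius.
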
{}

\begin{proof}
When $i=0,$ one may check the claim by reducing modulo $p$ and using the Artin--Schreier sequence. For $i=1,$ we argue as follows: note that for any quasisyntomic $\F_p$-algebra $S,$ we have $\Z_p(1) (S)[1]\simeq R\Gamma_{\mathrm{\acute{e}t}} (S, \mathbb G_m)^{\wedge_p}.$ This implies that we have a fiber sequence $$ \prod_{(r,p)=1} W(S) \to \varprojlim_n \Z_p(1) (R[t]/t^n)[1] \to \Z_p(1) (R)[1].$$Since $R$ is quasiregular semiperfect, $\Z_p(1) (R)$ is discrete, which gives the claim.

Let us now suppose that $i>1.$
  Since we have a fiber sequence of derived $p$-complete objects
$$ \varprojlim \Z_p(i) (R[t]/t^n) \to \varprojlim \Fil_{\mathrm{Nyg}}^{i} \widehat{R\Gamma}_{\mathrm{crys}}(R[t]/t^n) \xrightarrow{\varphi_i - \iota}  \varprojlim  \widehat{R\Gamma}_{\mathrm{crys}}(R[t]/t^n),$$ it would be enough to prove that the map 
${\varphi_i - \iota} \colon \varprojlim \Fil_{\mathrm{Nyg}}^{i} \widehat{R\Gamma}_{\mathrm{crys}}(R[t]/t^n)/p \to \varprojlim  \widehat{R\Gamma}_{\mathrm{crys}}(R[t]/t^n)/p$ induces a surjection on $\pi_{-1}.$ Note that the composition
$$  \Fil_{\mathrm{Nyg}}^{i+1} \widehat{R\Gamma}_{\mathrm{crys}}(R[t]/t^n) \to  \Fil_{\mathrm{Nyg}}^{i} \widehat{R\Gamma}_{\mathrm{crys}}(R[t]/t^n) \xrightarrow{\varphi_i - \iota}  \widehat{R\Gamma}_{\mathrm{crys}}(R[t]/t^n)/p \simeq  \widehat{\dR} (R[t]/t^n)$$ is homotopic to the canonical map $\iota$ and the latter factors as 
\begin{equation}\label{wh}
   \Fil_{\mathrm{Nyg}}^{i+1} \widehat{R\Gamma}_{\mathrm{crys}}(R[t]/t^n) \to \Fil^{i+1}_{\mathrm{Hodge}} \widehat{\dR}(R[t]/t^n) \to  \widehat{\dR}(R[t]/t^n).  
\end{equation}{}
Since we have a fiber sequence
$$ \Fil_{\mathrm{Nyg}}^{i} \widehat{R\Gamma}_{\mathrm{crys}}(R[t]/t^n)  \xrightarrow{p} \Fil_{\mathrm{Nyg}}^{i+1} \widehat{R\Gamma}_{\mathrm{crys}}(R[t]/t^n)  \to \Fil^{i+1}_{\mathrm{Hodge}} \widehat{\dR}(R[t]/t^n),$$it follows from \cref{ktt} that the map $\varprojlim \Fil_{\mathrm{Nyg}}^{i+1} \widehat{R\Gamma}_{\mathrm{crys}}(R[t]/t^n) \to \varprojlim \Fil^{i+1}_{\mathrm{Hodge}} \widehat{\dR}(R[t]/t^n)$ is a surjection on $\pi_{-1}.$ Therefore, the image under $\pi_{-1}$ of the composite map $\varprojlim \Fil_{\mathrm{Nyg}}^{i+1} \widehat{R\Gamma}_{\mathrm{crys}}(R[t]/t^n)  \to \varprojlim \widehat{\dR}(R[t]/t^n)$  coming from \cref{wh} is the same as image of $\pi_{-1}$ induced by the map $$\alpha\colon \varprojlim \Fil^{i+1}_{\mathrm{Hodge}} \widehat{\dR}(R[t]/t^n) \to \varprojlim  \widehat{\dR}(R[t]/t^n).$$

On the other hand, note that the composition $$\Fil^{i-1}_{\mathrm{Nyg}} \widehat{R\Gamma}_{\mathrm{crys}}(R[t]/t^n) \xrightarrow{p} \Fil_{\mathrm{Nyg}}^{i} \widehat{R\Gamma}_{\mathrm{crys}}(R[t]/t^n) \xrightarrow{\varphi_i - \iota} \widehat{R\Gamma}_{\mathrm{crys}}(R[t]/t^n)/p$$ is homotopic to $\varphi_{i-1}.$ Furthermore, since $i>0,$ the map $\varphi_{i-1}\colon\Fil^{i-1}_{\mathrm{Nyg}} \widehat{R\Gamma}_{\mathrm{crys}}(R[t]/t^n) \to \widehat{R\Gamma}_{\mathrm{crys}}(R[t]/t^n)/p $ factors as $$\Fil^{i-1}_{\mathrm{Nyg}} \widehat{R\Gamma}_{\mathrm{crys}}(R[t]/t^n) \to \gr^{i-1}_{\mathrm{Nyg}} \widehat{R\Gamma}_{\mathrm{crys}}(R[t]/t^n) \to \widehat{R\Gamma}_{\mathrm{crys}}(R[t]/t^n)/p .$$Passing to inverse limits over $n$, we see that the map $\Fil^{i-1}_{\mathrm{Nyg}} \widehat{R\Gamma}_{\mathrm{crys}}(R[t]/t^n) \to \gr^{i-1}_{\mathrm{Nyg}} \widehat{R\Gamma}_{\mathrm{crys}}(R[t]/t^n)$ induces surjection on $\pi_{-1}.$ Therefore, the image of the map induced on $\pi_{-1}$ by the composite map $\varprojlim \Fil^{i-1}_{\mathrm{Nyg}} \widehat{R\Gamma}_{\mathrm{crys}}(R[t]/t^n) \to \varprojlim \widehat{R\Gamma}_{\mathrm{crys}}(R[t]/t^n)/p$ is the same as the image of the map induced on $\pi_{-1}$ by $\varprojlim \gr^{i-1}_{\mathrm{Nyg}} \widehat{R\Gamma}_{\mathrm{crys}}(R[t]/t^n) \to \varprojlim \widehat{R\Gamma}_{\mathrm{crys}}(R[t]/t^n)/p.$ The latter map identifies with the map $$\beta\colon\varprojlim \Fil^{i-1}_{\conj} \mathrm{dR} (R[t]/t^n) \to \varprojlim \widehat{\dR} (R[t]/t^n).$$

It would be enough to prove that image of $\pi_{-1}(\alpha)$ and $\pi_{-1}(\beta)$ generates $\pi_{-1} (\varprojlim \widehat{\dR}(R[t]/t^n))$ under addition. Note that there is a natural map $$ \varprojlim \Fil^{i}_{\Hodge} \widehat{\dR}(R) \otimes_{\F_p} \Fil^1_{\Hodge}\widehat{\dR}(\F_p[t]/t^n) \to \varprojlim \Fil^{i+1}_{\mathrm{Hodge}} \widehat{\dR}(R[t]/t^n).$$Composing with $\alpha,$ we get a map $ \varprojlim \Fil^{i}_{\Hodge} \widehat{\dR}(R) \otimes_{\F_p} \Fil^1_{\Hodge}\widehat{\dR}(\F_p[t]/t^n) \to \varprojlim  \widehat{\dR}(R[t]/t^n).$ Note that by \cref{fight}, we have an isomorphism $\pi_{-1} (\varprojlim \widehat{\dR}(R[t]/t^n)) \simeq \widehat{\dR}(R)[[s]].$ It follows that under the latter isomorphism, the image of $\pi_{-1}(\alpha)$ contains all elements of the form $\sum_i x_i s^i$, where $x_i \in \Fil^i_{\Hodge} \widehat{\dR}(R)$ and image of $\pi_{-1}(\beta)$ contains all elements of the form $\sum_i y_i s^i$, where $y_i \in \Fil^{i-2}_{\conj}{\dR}(R)$ (see \cref{kt}). For $i >1,$ we have $$\Fil^{i-2}_{\conj}{\dR}(R) + \Fil^i_{\Hodge} \widehat{\dR}(R) = \widehat{\dR}(R),$$ which finishes the proof. 
\end{proof}{}

\begin{lemma}\label{filtrationlemma}
    Let us suppose that a spectrum $S$ admits a descending complete and exhaustive $\Z$-indexed filtration $\Fil^* S$ such that the graded pieces $\gr^n S \in \mathrm{Sp}_{[2n,2n-1]}.$ Then there is a natural isomorphism $\Fil^n S \simeq \tau_{\ge 2n-1} S.$
\end{lemma}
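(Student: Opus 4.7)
The plan is to prove the two containments $\Fil^n S \in \Sp_{\ge 2n-1}$ and $S/\Fil^n S \in \Sp_{\le 2n-2}$ separately, so that the fiber sequence $\Fil^n S \to S \to S/\Fil^n S$ is forced to be the Postnikov--Whitehead decomposition of $S$ at the cut $2n-1$; this immediately identifies $\Fil^n S$ with $\tau_{\ge 2n-1} S$.

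For the truncation bound $S/\Fil^n S \in \Sp_{\le 2n-2}$, I would invoke exhaustivity to write $S/\Fil^n S \simeq \colim_{m \to \infty} \Fil^{-m} S/\Fil^n S$. Each finite quotient in the colimit is built by successive cofiber sequences from the graded pieces $\gr^k S$ with $-m \le k < n$, and the hypothesis gives $\pi_i(\gr^k S) = 0$ for $i > 2k \le 2n-2$. Since $\Sp_{\le 2n-2}$ is closed under extensions and filtered colimits, this half is routine.

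For the connectivity bound $\Fil^n S \in \Sp_{\ge 2n-1}$, I would use completeness to write $\Fil^n S \simeq \lim_k \Fil^n S/\Fil^{n+k} S$ and apply the Milnor $\lim^1$ short exact sequence
\[
0 \to {\textstyle\lim_k^1}\, \pi_{i+1}\bigl(\Fil^n S/\Fil^{n+k} S\bigr) \to \pi_i(\Fil^n S) \to {\textstyle\lim_k}\, \pi_i\bigl(\Fil^n S/\Fil^{n+k} S\bigr) \to 0.
\]
Each quotient $\Fil^n S/\Fil^{n+k} S$ is a finite extension of $\gr^j S$ for $n \le j < n+k$ and therefore lies in $\Sp_{\ge 2n-1}$, so the $\lim$ term vanishes for all $i \le 2n-2$.

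The only genuine obstacle, and the step I expect to be the heart of the argument, is the vanishing of the $\lim^1$ at $i = 2n-2$, i.e., showing that the inverse system $\pi_{2n-1}(\Fil^n S/\Fil^{n+k} S)$ is Mittag--Leffler. Here I would exploit the fiber sequence $\gr^{n+k} S \to \Fil^n S/\Fil^{n+k+1} S \to \Fil^n S/\Fil^{n+k} S$: the hypothesis that the homotopy of $\gr^{n+k} S$ is concentrated in $\{2(n+k)-1, 2(n+k)\}$ forces $\pi_{2n-2}(\gr^{n+k} S) = 0$ for every $k \ge 0$, so the transition maps of the inverse system are surjective, which is more than enough for Mittag--Leffler. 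This is precisely where the sharp two-degree window in the hypothesis is used: without that sharpness a connectivity degradation by one would spoil the conclusion.
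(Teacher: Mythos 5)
Your proof is correct and takes essentially the same route as the paper: both arguments establish connectivity of $\Fil^n S$ via completeness, the Milnor $\lim^1$ sequence, and the Mittag--Leffler observation that the transition maps on $\pi_{2n-1}$ are surjective because $\pi_{2n-2}(\gr^{n+k}S)=0$, while the complementary bound (your coconnectivity of $S/\Fil^n S$, the paper's direct isomorphism $\pi_j(\Fil^n S)\simeq\pi_j(S)$ for $j\ge 2n-1$) is obtained in both cases from exhaustivity and the degree range of the graded pieces. The two phrasings are equivalent, and you correctly isolate the Mittag--Leffler step as the one place where the sharp two-degree window of the hypothesis is really needed.
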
{}

\begin{proof}
 Let us fix an integer $n.$ Let us choose another integer $j \ge 2n-1.$  By the description of graded pieces, it follows that $\pi_j (\Fil^n S) = \pi_j (\Fil^{n-1}S) = \ldots.$ Therefore, $\pi_j (\Fil^n S) \simeq \pi_j (S),$ since the filtration is exhaustive. By completeness of the filtration, we have $\Fil^n S \simeq \varprojlim _{k \in \mathbf N} \Fil^n S/ \Fil^{n+k}S$ By the description of the graded pieces, it follows that if $j < 2n-1,$ then $\pi_j (\Fil^n S/ \Fil^{n+k}) \simeq 0.$ Moreover, using the fiber sequence 
$$\gr^{n+k} S \to \Fil^n S/ \Fil^{n+k+1} S \to \Fil^n S/ \Fil^{n+k} S ,$$ we see that the maps $\pi_{2n-1} (\Fil^n S/ \Fil^{n+k+1}) \to \pi_{2n-1} (\Fil^n S/ \Fil^{n+k})$ are surjections for $k \ge 1.$ By using Milnor sequences, it follows that $\pi_j (\Fil^n S) = 0$ for $j < 2n-1.$ Since we have a natural map $\Fil^n S[-2n+1] \to S[-2n+1],$ and $\Fil^n S[-2n+1]$ is connective, we obtain a map $\Fil^n S \to \tau_{\ge 2n-1} S.$ Since we know that this map induces isomorphism on all homotopy groups, we obtain the desired claim.
\end{proof}{}

Now we can summarize the observations in this section in the following manner: let $S$ be a quasisyntomic $\F_p$-algebra. Let us define $$\Fil^* \varprojlim_k \TC (S[t]/t^k)\coloneqq \varprojlim_k \mathrm{Fil}_{\mathrm{BMS}}^* \TC (S[t]/t^k).$$
It follows that the graded pieces of this filtration are computed as $$\mathrm{gr}^n \varprojlim_k \TC (S[t]/t^k) \simeq \varprojlim_{k} \Z_p(n) (S[t]/t^k)[2n].$$ It also follows that $\Fil^* \varprojlim_k \TC (S[t]/t^k)$ is a complete exhaustive filtration and the functor determined by $S \mapsto \Fil^* \varprojlim_k \TC (S[t]/t^k)$ is a quasisyntomic sheaf of spectra. The proposition below gives a concrete description of this filtration for quasiregular semiperfect algebras.

\begin{proposition}\label{prop1}
    Let $R$ be a quasiregular semiperfect algebra. Then $$\Fil^n \varprojlim_k \TC (R[t]/t^k) \simeq \tau_{\ge 2n-1}  \varprojlim_k \TC (R[t]/t^k).$$
\end{proposition}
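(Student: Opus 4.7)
The plan is to directly apply \cref{filtrationlemma} to the filtration $\Fil^* \varprojlim_k \TC(R[t]/t^k)$, since essentially all the work has been packaged into the earlier lemmas of this section. I first want to verify that the filtration satisfies the hypotheses of \cref{filtrationlemma}: it is a descending, complete, exhaustive $\Z$-indexed filtration, which was observed in the paragraph preceding the proposition (it is an inverse limit of the BMS motivic filtrations, which are individually complete and exhaustive, and the graded pieces are identified as $\mathrm{gr}^n \varprojlim_k \TC(R[t]/t^k) \simeq \varprojlim_k \Z_p(n)(R[t]/t^k)[2n]$).

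The key input is then to show these graded pieces lie in the expected connectivity range. By \cref{LY}, for each $n$ we have $\varprojlim_k \Z_p(n)(R[t]/t^k) \in D_{[-1,0]}(\Z_p)$, so after the shift by $[2n]$ the graded piece has homotopy groups concentrated in degrees $2n-1$ and $2n$. This is precisely the connectivity hypothesis of \cref{filtrationlemma}.

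With these two observations in hand, \cref{filtrationlemma} directly yields the identification
\[
\Fil^n \varprojlim_k \TC (R[t]/t^k) \simeq \tau_{\ge 2n-1}  \varprojlim_k \TC (R[t]/t^k),
\]
as desired. The real content of this proposition is already contained in \cref{LY} (the bound on the inverse limit of Tate twists for truncated polynomial rings), whose proof required the detailed analysis of the Nygaard filtration on $\widehat{R\Gamma}_{\mathrm{crys}}(R[t]/t^k)$ together with the Hodge and conjugate filtrations. There is no remaining obstacle: given \cref{LY} and the elementary \cref{filtrationlemma}, the proof should be a two-line application.
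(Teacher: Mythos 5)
Your argument matches the paper's proof, which likewise cites the description of the graded pieces of $\Fil^* \varprojlim_k \TC(R[t]/t^k)$, \cref{LY}, and \cref{filtrationlemma}. The reasoning is correct and essentially identical.
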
{}

\begin{proof}
    Follows from the above description of the graded pieces along with \cref{LY} and \cref{filtrationlemma}. 
\end{proof}{}

\begin{remark}
    Let us point out that certain computations of topological cyclic homology of $R[t]/t^k$ where $R$ is a perfect(oid) ring appeared in \cite{Yuri} and \cite{Noah1} (\textit{cf.}~\cite[Thm.~10.4]{akhill}).
\end{remark}{}

\section{Proof of the main result}\label{finalsec} In this section, we will enhance Hesselholt's isomorphism \cref{hesselholt} with the motivic filtrations studied above. We recall some notations first. Let $S$ be a quasisyntomic $\F_p$-algebra. Let $\Fil^* \TR(S)$ be the filtration constructed before \cref{cor1}. Let $\Fil^*  \TC (S[t]/t^k)$ and $\Fil^* \TC(S)$ be the motivic filtrations as constructed by Bhatt--Morrow--Scholze.

\begin{proposition}\label{mainthm1}
    Let $S$ be a quasisyntomic $\F_p$-algebra. Then  we have a natural isomorphism $$ (\Fil^{*-1} \TR(S))[1] \simeq \varprojlim_k \mathrm{fib} \left(\Fil^* \TC(S[t]/t^k) \to \Fil^* \TC(S)\right).$$
\end{proposition}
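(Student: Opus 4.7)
The plan is to combine quasisyntomic descent with the Postnikov-type descriptions from \cref{cor3}, \cref{BMSsimple}, and \cref{prop1} to identify both sides with the same truncation of $\TR(R)[1]$ for quasiregular semiperfect $R$, from which the general case follows by descent.

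First I would verify that both sides define quasisyntomic sheaves of filtered spectra in the variable $S$. For the left-hand side, this is \cref{cor1}. For the right-hand side, the BMS motivic filtration $\Fil^* \TC$ is a quasisyntomic sheaf; since $\F_p[t]/t^k$ is flat over $\F_p$, the base change $S \mapsto S[t]/t^k$ carries quasisyntomic covers to quasisyntomic covers, so each $S \mapsto \Fil^* \TC(S[t]/t^k)$ is again a quasisyntomic sheaf of filtered spectra. Taking the fiber of the natural augmentation $S[t]/t^k \to S$ and the inverse limit over $k$ preserves the sheaf condition. In view of \cite[Prop.~4.31]{BMS2} it then suffices to produce compatible natural identifications of both sides with a single functor on the quasiregular semiperfect site.

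Now let $R$ be a quasiregular semiperfect algebra. For the left-hand side, \cref{cor3} gives $\Fil^{n-1}\TR(R) \simeq \tau_{\ge 2n-2}\TR(R)$, and since $\pi_* \TR(R)$ is concentrated in even degrees, shifting yields the canonical identification
$$(\Fil^{n-1}\TR(R))[1] \simeq \tau_{\ge 2n-1}\bigl(\TR(R)[1]\bigr).$$
For the right-hand side, \cref{prop1} gives $\varprojlim_k \Fil^n \TC(R[t]/t^k) \simeq \tau_{\ge 2n-1} \varprojlim_k \TC(R[t]/t^k)$, and \cref{BMSsimple} gives $\Fil^n \TC(R) \simeq \tau_{\ge 2n-1}\TC(R)$. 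Since $\tau_{\ge 2n-1}$ is a right adjoint, it preserves fibers; applying it to the unfiltered Hesselholt equivalence \eqref{hesselholt} therefore yields
$$\varprojlim_k \mathrm{fib}\bigl(\Fil^n \TC(R[t]/t^k) \to \Fil^n \TC(R)\bigr) \simeq \tau_{\ge 2n-1}\bigl(\TR(R)[1]\bigr).$$
Both sides of the proposition are thus naturally equivalent to $\tau_{\ge 2n-1}(\TR(R)[1])$ on the quasiregular semiperfect site, and the identifications are compatible with variation in $n$; quasisyntomic descent then upgrades this to an equivalence of filtered spectra for every quasisyntomic $\F_p$-algebra $S$.

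The main obstacle is promoting these pointwise QRSP identifications to a genuine equivalence of quasisyntomic sheaves of filtered spectra, rather than merely matching subobjects of $\TR(R)[1]$ on the nose. This amounts to verifying that the filtration on $\TR$ produced in \cref{trr} and the BMS motivic filtration on $\TC$ both reduce, canonically and functorially in $R$, to the truncation filtration on $\TR(R)[1]$ over the QRSP site, and that Hesselholt's equivalence \eqref{hesselholt} is natural in $R$. With these in hand, \cite[Prop.~4.31]{BMS2} delivers the desired isomorphism of filtered spectra.
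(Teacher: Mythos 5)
Your strategy (reduce to quasiregular semiperfect $R$ by quasisyntomic descent, identify both filtrations with truncations via \cref{cor3}, \cref{BMSsimple}, and \cref{prop1}, then truncate the Hesselholt sequence) is the same as the paper's, and the conclusion is right, but one step is justified incorrectly. The claim ``$\tau_{\ge 2n-1}$ is a right adjoint, hence preserves fibers'' is false as stated: $\tau_{\ge m}$ is right adjoint to the inclusion $D(\Z)_{\ge m} \hookrightarrow D(\Z)$, so what the adjunction actually gives is
\[
\tau_{\ge m}\bigl(\mathrm{fib}(B \to C)\bigr) \simeq \tau_{\ge m}\Bigl(\mathrm{fib}\bigl(\tau_{\ge m}B \to \tau_{\ge m}C\bigr)\Bigr),
\]
with an \emph{outer} $\tau_{\ge m}$ on the right-hand side. (Concretely, applying $\tau_{\ge 0}$ to $\Z/2[-1] \to \Z \xrightarrow{2} \Z$ destroys the fiber-sequence property.) To drop that outer truncation you must know that $\mathrm{fib}(\tau_{\ge 2n-1}B \to \tau_{\ge 2n-1}C)$ is already $(2n-1)$-connective, i.e., that $\pi_{2n-1}B \to \pi_{2n-1}C$ is surjective. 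In the situation at hand this holds precisely because $\pi_{2n-3}\TR(R) = 0$ (odd degree, and $\pi_*\TR(R)$ is even by \cref{cor3}), which kills the boundary map $\pi_{2n-1}\TC(R) \to \pi_{2n-2}(\TR(R)[1])$. So the even-concentration of $\pi_*\TR(R)$ is not decoration: it is the only thing that makes truncation commute with the fiber here, and it is the hinge of the paper's argument. You instead cite even-concentration for the shift identification $(\Fil^{n-1}\TR(R))[1] \simeq \tau_{\ge 2n-1}(\TR(R)[1])$, but that step is the tautology $(\tau_{\ge m}X)[1] = \tau_{\ge m+1}(X[1])$ and needs no such input --- the hypothesis has been transposed to the wrong place. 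With the justification moved to where it is actually used, your argument matches the one in the paper.
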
{}

\begin{proof}
    Let $R$ be a quasiregular semiperfect algebra. Using Hesselholt's result \cref{hesselholt}, we obtain a natural fiber sequence $$ \varprojlim_k \TC (R[t]/t^k) \to \TC(R) \to \TR(R)[2].$$ 
By \cref{cor3} and \cref{BMSsimple}, we obtain a fiber sequence
$$\tau_{\ge 2n-1}  \varprojlim_k \TC (R[t]/t^k) \to \tau_{\ge 2n-1} \TC(R) \to 
(\tau_{\ge 2n-2}\TR(R))[2].$$ Using \cref{cor3} and \cref{prop1}, this implies that we have a fiber sequence
$$\Fil^n \varprojlim_k \TC (R[t]/t^k) \to \Fil^n \TC(R) \to \Fil^{n-1} \TR(R) [2].$$ Applying quasisyntomic descent produces a natural isomorphism $$(\Fil^{n-1} \TR(S))[1] \simeq \varprojlim_k \mathrm{fib} \left(\Fil^n \TC(S[t]/t^k) \to \Fil^n \TC(S)\right)$$ for any quasisyntomic $\mathbb{F}_p$-algebra $S$. This finishes the proof.
\end{proof}{}

\begin{proposition}\label{prop98}
Let $S$ be a quasisyntomic $\F_p$-algebra. Then we have a natural isomorphism
$$\prod_{(u,p)=1}\L W\Omega^{n-1}_S \simeq \mathrm{fib} \left(\varprojlim_k \Z_p(n)(S[t]/t^k)[n] \to \Z_p(n)(S)[n]\right).$$
\end{proposition}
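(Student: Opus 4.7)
My plan is to deduce this proposition by passing to the $n$-th graded piece of the filtered equivalence in \cref{mainthm1}, since essentially all of the nontrivial work has been absorbed into that statement. Concretely, one applies $\gr^n$ to both sides of
\[
(\Fil^{*-1} \TR(S))[1] \simeq \varprojlim_k \mathrm{fib}\bigl(\Fil^* \TC(S[t]/t^k) \to \Fil^* \TC(S)\bigr)
\]
and matches the two resulting expressions.

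First I would compute $\gr^n$ of the right-hand side. The filtration $\Fil^* \varprojlim_k \TC(S[t]/t^k)$ is by construction the inverse limit $\varprojlim_k \Fil^*_{\mathrm{BMS}} \TC(S[t]/t^k)$, and in the paragraph preceding \cref{prop1} it is observed that its $n$-th graded piece is $\varprojlim_k \Z_p(n)(S[t]/t^k)[2n]$. Combined with the Bhatt--Morrow--Scholze identification $\gr^n \TC(S) \simeq \Z_p(n)(S)[2n]$ and the fact that $\gr^n$ commutes with fibers, I obtain
\[
\gr^n\!\Bigl(\varprojlim_k \mathrm{fib}(\Fil^* \TC(S[t]/t^k) \to \Fil^* \TC(S))\Bigr) \simeq \mathrm{fib}\bigl(\varprojlim_k \Z_p(n)(S[t]/t^k) \to \Z_p(n)(S)\bigr)[2n].
\]

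Next I would compute $\gr^n$ of the left-hand side. The shift $[1]$ commutes with $\gr^n$, and reindexing converts $\gr^n(\Fil^{*-1})$ into $\gr^{n-1}$. Combining \cref{trr} with the factorwise extension $\Fil^* \TR(S) := \prod_{(u,p)=1} \Fil^* \TR(S,p)$ from the discussion preceding \cref{cor1} identifies $\gr^{n-1}\TR(S)$ with $\prod_{(u,p)=1} \LL W\Omega^{n-1}_S[n-1]$, and therefore
\[
\gr^n\bigl((\Fil^{*-1}\TR(S))[1]\bigr) \simeq \prod_{(u,p)=1} \LL W\Omega^{n-1}_S[n].
\]
Equating the two sides via \cref{mainthm1} and desuspending by $n$ then produces the claimed equivalence.

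There is no substantive obstacle: the whole content has been packaged into \cref{mainthm1} together with the BMS computation of $\gr^n\TC$ and the \cref{trr} computation of $\gr^n\TR$. The only point one should verify is that the isomorphism of \cref{mainthm1} is natural enough that passing to $\gr^n$ identifies the right-hand side's graded piece with the fiber of $\varprojlim_k \Z_p(n)(S[t]/t^k) \to \Z_p(n)(S)$ (rather than, say, a shifted or twisted version), but this is immediate from how the filtration on $\varprojlim_k \TC(S[t]/t^k)$ was defined and from the compatibility of $\gr^n$ with fibers and inverse limits of complete filtrations.
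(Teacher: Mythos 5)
Your proposal is correct and is essentially identical to the paper's own proof: both pass to the $n$-th graded piece of the filtered equivalence in \cref{mainthm1}, identify $\gr^{n-1}\TR(S)[1]$ with $\prod_{(u,p)=1}\L W\Omega^{n-1}_S[n]$ via \cref{trr}, identify the graded piece on the other side with $\mathrm{fib}(\varprojlim_k\Z_p(n)(S[t]/t^k)\to\Z_p(n)(S))[2n]$, and desuspend.
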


    \begin{proof}
        By passing to the graded pieces in the filtered isomorphism in \cref{mainthm1}, we obtain a natural isomorphism $$\prod_{(u,p)=1}\L W\Omega^{n-1}_S[n-1][1] \simeq \mathrm{fib} \left(\varprojlim_k \Z_p(n)(S[t]/t^k)[2n] \to \Z_p(n)(S)[2n]\right),$$ which gives the desired result.\end{proof}{}

\begin{construction}[Frobenius and Verschiebung]\label{constrans}
Let $S$ be a quasisyntomic $\mathbf F_p$-algebra. Let $$C (\mathbf{Z}_p(n)[n]_S) \coloneqq \mathrm{fib} \left(\varprojlim_k \Z_p(n)(S[t]/t^k)[n] \to \Z_p(n)(S)[n]\right),$$ which we regard as curves on $\mathbf Z_p(n)[n].$ Let $m \ge 0$ be an integer. The assignment $t \mapsto t^m$ determines an endomorphism  $V_m\colon C (\mathbf{Z}_p(n)[n]_S) \to C (\mathbf{Z}_p(n)[n]_S),$ which we call the $m$-th \emph{Verschiebung}. We will now construct the $m$-th Frobenius maps. Let us first assume that $S$ is quasiregular semiperfect. We will construct a ``transfer endomorphism" \begin{equation}\label{transfer1}
    \Phi_m\colon \varprojlim_k \Z_p(n)(S[t]/t^k)  \to \varprojlim_k \Z_p(n)(S[t]/t^k)
\end{equation}{}To do so, one notes that there are transfer maps $ \TC (S[t]/t^{km}) \to \TC (S[t]/t^k)$ induced by the map $S[t]/t^{k} \to S[t]/t^{km}$ determined by $t \mapsto t^m.$ This induces a map $$\Tilde{\Phi}_m\colon \varprojlim_k \TC(S[t]/t^k) \to \varprojlim_k \TC(S[t]/t^k).$$ Now, using the assumption that $S$ is quasiregular semiperfect, \cref{prop1}, and passing to graded pieces produces the desired transfer map \cref{transfer1} on the $p$-adic Tate twists. By quasisyntomic descent, for any quasisyntomic $\mathbf F_p$-algebra, we obtain a ``transfer map" \begin{equation}\label{transfer}
    \Phi_m\colon \varprojlim_k \Z_p(n)(S[t]/t^k)  \to \varprojlim_k \Z_p(n)(S[t]/t^k).
\end{equation} The latter induces an endomorphism $$F_m \colon C (\mathbf{Z}_p(n)[n]_S) \to C (\mathbf{Z}_p(n)[n]_S),$$ that we call the $m$-th \emph{Frobenius}.
\end{construction}

\begin{remark}
    Note that the Frobenius and Verschiebung operators on $\TR(S)$ induce the operators $F_m$ and $V_m$ on the left hand side of \cref{prop98} as well. Using \cite[Rmk.~2.4.6]{Jonas}, it follows that \cref{prop98} is compatible with the $F_m$ and $V_m$ defined on both sides.
\end{remark}{}

\begin{construction}[$p$-typicalization]\label{ptypica}
Let $S$ be a quasisyntomic $\mathbf F_p$-algebra. Using \cref{constrans}, we obtain natural maps $\eta_m\colon \mathrm{fib} (F_m) \to C (\mathbf{Z}_p(n)[n]_S),$ which maybe viewed as an object of $D(\mathbf Z_p)_{/C (\mathbf{Z}_p(n)[n]_S)}.$ We define $$\mathbb{D}(\Z_p(n)[n]_S) \coloneqq \prod_{(m,p)=1, m>1}\eta_m \in D(\mathbf Z_p)_{/C (\mathbf{Z}_p(n)[n]_S)},$$ where the product is taken in $D(\mathbf Z_p)_{/C (\mathbf{Z}_p(n)[n]_S)}$. Naively, one may think of $\mathbb{D}(\Z_p(n)[n]_S)$ as $``\bigcap_{(m,p)=1, m>1} \mathrm{fib}(F_m)",$ where the latter should be suitably interpreted as above. By analogy with the classical situation, we will call $\mathbb{D}(\Z_p(n)[n]_S)$ the \emph{$p$-typical curves} on $\mathbf Z_p(n)[n]$ over $S.$ Note that $\mathbb{D}(\Z_p(n)[n]_S)$ is naturally equipped with the operators $F\coloneqq F_p$ and $V\coloneqq V_p$.
\end{construction}{}

\begin{corollary}Let $S$ be a quasisyntomic $\F_p$-algebra. Then we have a natural isomorphism 
$$ \LL W \Omega_S^{n-1} \simeq \mathbb{D}(\Z_p(n)[n]_S),$$ which is compatible with the $F$ and $V$ defined on both sides.
\end{corollary}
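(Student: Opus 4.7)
By \cref{prop98}, we have a natural equivalence $\Psi \colon C(\Z_p(n)[n]_S) \simeq \prod_{u \in I_p} \LL W \Omega^{n-1}_S$, where the product decomposition corresponds to the canonical splitting $\TR(S) \simeq \prod_{(k,p)=1} \TR(S,p)$ mentioned just before \cref{cor1}. The plan is to transport the Frobenius operators $F_m$ of \cref{constrans} (for $m > 1$ with $(m,p)=1$) along $\Psi$, compute each $\mathrm{fib}(F_m)$ explicitly as a direct summand of $\prod_{u \in I_p} \LL W \Omega^{n-1}_S$, and then identify the slice-categorical product defining $\mathbb{D}(\Z_p(n)[n]_S)$ with the common summand, which will be the factor at $u = 1$.

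The key identification is that, under $\Psi$, the Frobenius $F_m$ for $m \in I_p$ with $m > 1$ acts by the classical shift $(a_u)_{u \in I_p} \mapsto (a_{mu})_{u \in I_p}$. This follows from the compatibility recorded in the remark after \cref{prop98} (citing \cite[Rmk.~2.4.6]{Jonas}) together with the fact that the indexing $I_p$ arises from the big/$p$-typical splitting of $\TR$: in ghost coordinates one has $g_n(F_m a) = g_{mn}(a)$, and the $p^k$-th $p$-typical ghost of the $u$-th factor is $g_{u p^k}$, so the $u$-th factor of $F_m a$ reads off the $mu$-th factor of $a$. For $n = 1$ this is literally the classical Frobenius on big Witt vectors; for general $n$ it is transported through the graded pieces of the filtered isomorphism of \cref{mainthm1}.

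Given the shift, $F_m$ is a split surjection whose fiber is the direct summand $\prod_{u \in I_p,\, m \nmid u} \LL W \Omega^{n-1}_S$ of $C(\Z_p(n)[n]_S)$. In a stable $\infty$-category, for direct-summand inclusions $X_m \hookrightarrow C$ the slice-categorical product $\prod_C X_m$ coincides with the intersection $\bigcap_m X_m$, since it computes $\mathrm{fib}\bigl(C \to \prod_m C/X_m\bigr)$. Applying this to $X_m = \mathrm{fib}(F_m)$ over $m > 1$ in $I_p$, the intersection is indexed by those $u \in I_p$ satisfying $m \nmid u$ for every such $m$; this forces $u = 1$ (any $u > 1$ in $I_p$ divides itself), giving $\mathbb{D}(\Z_p(n)[n]_S) \simeq \LL W \Omega^{n-1}_S$. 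Compatibility with $F = F_p$ and $V = V_p$ is then automatic: these operators act on $\prod_{u \in I_p} \LL W \Omega^{n-1}_S$ by the classical $F, V$ of the animated de Rham--Witt complex on each factor separately (they do not mix $I_p$-indices), so they commute with every $F_m$ and restrict to the classical $F, V$ on the $u = 1$ summand.

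The main obstacle is justifying the shift formula for $F_m$ under $\Psi$. For $n = 1$ this reduces to the classical description of the Frobenius on big Witt vectors in the big/$p$-typical decomposition; for general $n$ it requires carefully tracking the transfer maps defining $F_m$ through the motivic-filtration argument of \cref{mainthm1} and verifying that they intertwine the product decomposition arising from $\TR(S) \simeq \prod_{(k,p)=1}\TR(S,p)$, which is the content of the remark following \cref{prop98}.
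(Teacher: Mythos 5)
Your proposal is correct and matches the paper's (extremely terse) proof: the paper cites \cref{prop98}, the compatibility remark, and Hesselholt's description of the Frobenius on $\TR$, which is exactly the content you spell out in detail — the identification $C(\Z_p(n)[n]_S) \simeq \prod_{u \in I_p}\LL W\Omega^{n-1}_S$, the shift action of $F_m$ under this splitting, the computation of $\mathrm{fib}(F_m)$ as the summand over $m\nmid u$, and the observation that the slice-categorical product of these direct-summand inclusions is the $u=1$ factor. You have filled in the steps the paper leaves implicit (in particular the verification that $\prod_{C}X_m \simeq \mathrm{fib}\bigl(C \to \prod_m C/X_m\bigr)$ for direct-summand inclusions), but the route is the same.
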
{}

\begin{proof}
This follows from \cref{prop98}, the previous discussion and the description of the Frobenius on $\TR(S)$ following \cite[Prop.~3.3.1]{Hess}.   
\end{proof}{}
\bibliographystyle{amsalpha}
\bibliography{main}
\end{document}